\documentclass[11pt]{amsart}
\usepackage{amsfonts,amsmath,amssymb,fullpage,graphicx, comment}
\usepackage{verbatim}
\textwidth6.2in \textheight8.5in \oddsidemargin0.00in
\evensidemargin0.00in

\newtheorem{thm}{Theorem}[section]
\newtheorem{cor}[thm]{Corollary}
\newtheorem{lem}[thm]{Lemma}

\newtheorem{prop}[thm]{Proposition}
\theoremstyle{definition}

\theoremstyle{remark}
\newtheorem{rem}[thm]{\bf{Remark}}
\numberwithin{equation}{section}

\newcommand{\beas}{\begin{eqnarray*}}
\newcommand{\eeas}{\end{eqnarray*}}
\newcommand{\bes} {\begin{equation*}}
\newcommand{\ees} {\end{equation*}}
\newcommand{\be} {\begin{equation}}
\newcommand{\ee} {\end{equation}}
\newcommand{\bea} {\begin{eqnarray}}
\newcommand{\eea} {\end{eqnarray}}
\newcommand{\ra} {\rightarrow}

\newcommand{\R}{\mathbb R}

\newcommand{\C}{\mathbb C}

\newcommand{\N}{\mathbb N}


\begin{document}

\title[Spectral projections and resolvent estimates]{Spectral projections and resolvent estimates on Damek-Ricci spaces and their applications}

\author{Mithun Bhowmik and Utsav Dewan}

\address{(Mithun Bhowmik) Department of Mathematics, IISc Bangalore-560012, India}
\email{mithunb@iisc.ac.in}

\address{(Utsav Dewan) Stat-Math Unit, Indian Statistical Institute, Kolkata-700108, India}
\email{utsav\_r@isical.ac.in}



\begin{abstract}
We prove $L^p-L^{p^\prime}$ boundedness of spectral projections and the resolvent of the Laplace-Beltrami operator on Damek-Ricci spaces with the explicit norms in terms of the spectral parameter. To prove these results we established pointwise sharp bounds on the spherical functions and their derivatives. As an application, we study the eigenvalue bounds of Schr\"odinger operators with complex valued potential. 

\end{abstract}

\subjclass[2010]{Primary 43A85; Secondary 22E30, 35P15}

\keywords{Harmonic $NA$ groups, Spectral projection estimates, Resolvent estimates}

\maketitle
\section{Introduction}

The Tomas-Stein restriction theorem says that the Fourier transform $\widehat f$ of a function $f \in L^p(\R^n)$ has a well defined restriction on the unit sphere $S^{n-1}$ via the inequality,
\be \label{restriction-1}
\|Rf \|_{L^2(S^{n-1})} \leq C_{n ,p} \|f\|_{L^p(\R^n)}, \:\: 1\leq p\leq \frac{2(n+1)}{n+3},
\ee
where $Rf = \widehat{f}~\vline_{~S^{n-1}}$. This bound for $ p< 2(n+1)/(n+3)$ was obtained by Tomas \cite{T} and the endpoint case $p=2(n+1)/(n+3)$ is due to Stein \cite{S}. This result does not hold beyond this range of $p$ because of an example due to Knapp. These works of Tomas and Stein were the starting point of Fourier restriction theory which has deep connections to geometric measure theory and incidence
geometry, and has led to important developments in dispersive partial differential equations
and number theory. We refer to the textbook by Demeter \cite{D-Book} for an account of these developments.  

The sphere restriction problem has a corresponding natural generalization to certain manifolds. If $R^\ast$ denotes the adjoint of $R$, then it follows that the kernel of $R^\ast R$ is $dE_{\sqrt{-\Delta_{\R^n}}}(1)$. Here the Schwartz kernel of the spectral measure $dE_{\sqrt{-\Delta_{\R^n}}}(\lambda)$ of the Euclidean Laplacian $-\Delta_{\R_n}$ is given by
\bes
dE_{\sqrt{-\Delta_{\R^n}}}(\lambda, x, x^\prime) = \frac{\lambda^{n-1}}{(2\pi)^n} \int_{S^{n-1}} e^{i(x-x^\prime)\cdot \lambda w} dw, \:\:\:\:  x, x^\prime \in \R^n.
\ees
Therefore, one may rewrite the restriction theorem equivalently as
\bea \label{restriction-2}
\|dE_{\sqrt{-\Delta_{\R^n}}}(\lambda)\|_{L^p(\R^n) \ra L^{p^\prime}(\R^n)} &=& \lambda^{n(1/p-1/p^\prime)-1} \|dE_{\sqrt{-\Delta_{\R^n}}}(1)\|_{L^p(\R^n) \ra L^{p^\prime}(\R^n)} \nonumber \\
& \leq&  C_{n, p} \lambda^{n(1/p-1/p^\prime)-1},
\eea
provided $1\leq p\leq 2(n+1)/(n+3)$. This naturally leads to the question:  for
which Riemannian manifolds $(N, g)$ does the spectral measure for $ \sqrt{-\Delta_{N, g}}$
map $L^p(N, g)$ to $L^{p^\prime}(N, g)$ for some $p\in [1, 2)$, and how does the norm
depend on the spectral parameter? We refer to such an estimate as a “restriction estimate” or a “restriction theorem”. 

Closely related to the Tomas-Stein restriction theorem is the resolvent estimates for the Laplacian which is of the form: for $f\in C_c^\infty(\R^n)$
\be \label{resolvent-eqn}
\|\left(-\Delta_{\R^n}-z\right)^{-1} f\|_{L^q(\R^n)}\leq C \|f\|_{L^p(\R^n)}, \:\: z\in \C,
\ee
where $C$ is independent of $z$ (and hence it is called uniform resolvent estimate). When $z = 0$, the estimate is a special case of the classical Hardy-Littlewood-Sobolev inequality. When $z$ is a positive number, the operator $\left(-\Delta_{\R^n}-z\right)^{-1}$  is either the incoming or the outgoing resolvent, $\left(-\Delta_{\R^n}-(z\pm i 0)\right)^{-1}$ . In their celebrated work \cite{KRS}, Kenig, Ruiz and Sogge showed that, for $n\geq 3$, there is a uniform constant $C = C(p, q, n) > 0$ such that (\ref{resolvent-eqn}) holds if and only if
\be \label{p-q-gc} 
\frac{2n}{n+3}< p< \frac{2n}{n+1} \:\: { and } \:\:   \frac{1}{p}-\frac{1}{q}= \frac{2}{n}.
\ee
The gap condition in (\ref{p-q-gc}), that is, $1/p-1/q= 2/n$ follows from the scaling considerations, while necessity regarding the range of $p$ in (\ref{p-q-gc}) is related to the fact that the Fourier transform of surface measure on the sphere in $\R^n$ is not in $L^q(\R^n)$ if $q\leq 2n/(n-1)$.  When $q=p^\prime$ is the conjugate index of $p$, that is, the one satisfying $1/p+1/p^\prime=1$, the authors in \cite{KRS} also showed the following resolvent estimate: for $f\in C_c^\infty(\R^n)$
\be \label{resolvent-eqn-2}
\|\left(-\Delta_{\R^n}-z\right)^{-1} f\|_{L^{p^\prime}(\R^n)}\leq C_{p, n} |z|^{n(1/p-1/2)-1}\|f\|_{L^p(\R^n)}, \:\:  \textit{ for } \:\: \frac{2n}{n+2}\leq p\leq \frac{2(n+1)}{n+3},
\ee
for $n\geq 3$. The proof of resolvent estimates (\ref{resolvent-eqn}) and (\ref{resolvent-eqn-2}) is closely related to the Stein-Tomas restriction theorem.
 
Finding the sharp norm of the restriction operator in terms of the parameter $\lambda$ is crucial due to various reasons. For example, at an abstract level restriction estimates imply spectral multiplier estimates, including the Bochner-Riesz summability results \cite{COSY, CH, GHS} and the Strichartz estimates \cite{HZ, C-3}. On the other hand, resolvent estimates and their extensions have found many applications in analysis and PDE, including unique continuation problems and absence of positive eigenvalues \cite{KT}, limiting absorption principles \cite{GS, IS} and eigenvalue bounds for Schr\"odinger operators with complex potentials \cite{F, FS, F-III}.

Establishing analogues of the restriction theorem in the form of (\ref{restriction-2}) and the resolvent estimates to Riemannian manifolds has received considerable interest over the years. Restriction theorem on compact manifolds was first investigated by Sogge and these are known as discrete restriction theorems \cite[Chapter 5]{Sogge-Book}. For the resolvent estimates on compact manifolds we refer the reader to \cite{BSSY,DKS, SY}. On complete noncompact manifolds much less is known. For real hyperbolic spaces $\mathbb H^n$, the restriction theorem and the resolvent estimates were first studied by Huang and Sogge \cite{HS}. They established restriction theorem for $p$ in the same range $[1, 2(n+1)/(n+3)]$ as for Euclidean spaces, using the discrete restriction theorem for compact manifolds (which are optimal for general manifolds of finite geometry) and the complex interpolation. Using this restriction theorem and the exact expression for the hyperbolic resolvent, along with the estimates of oscillatory integral, they were able to adapt the proof of the Euclidean resolvent estimates in \cite{KRS} to obtain the hyperbolic variants of uniform resolvent estimates (\ref{resolvent-eqn}). In \cite{GHS, GH}, Guillarmou et. al. have extended the restriction theorem and resolvent estimates on nontrapping asymptotically conic manifolds. They argue at an abstract level that the above results are implied by certain pointwise bounds on the $\lambda$-derivatives of the Schwartz kernel of the spectral measure (corresponding to the Laplace-Beltrami operator) with parameter $\lambda$ . Using this same technique of pointwise bounds on the derivative of spectral measure, Chen and Hassell \cite{CH} proved a version of the restriction theorem on asymptotically real hyperbolic spaces. To obtain the pointwise bounds on the spectral measure they heavily used the geometry of the space. They could show that, in contrast with the Euclidean spaces, the restriction estimates are valid for all $p \in [1, 2)$.  This is related to a result analogous to the celebrated Kunze-Stein phenomenon. Using the pointwise bounds on the derivative of the spectral measure, Chen \cite{C2022} recently extended the resolvent estimates (\ref{resolvent-eqn-2}) to $\mathbb H^n$. Here also the index $p$ can be arbitrarily close to $2$. However, this is not the case for the Euclidean resolvent estimates because resolvent estimates imply the restriction theorem and the later is not true for $p$ close to $2$ due to Knapp's example, as has been mentioned earlier.
Restriction theorem and resolvent estimates in the context of $\mathbb H^n$ have also been studied recently in \cite{GL}. An analogue of the Stein-Tomas restriction theorem for the sub-Laplacian on the Heisenberg group was obtained earlier by M\"uller \cite{Mu}, which have been extended later to more general nilpotent Lie groups \cite{CC}.

Real hyperbolic spaces are the simplest examples of rank one Riemannian symmetric spaces of noncompact type, or more generally of harmonic $NA$ groups. The restriction theorem and the resolvent estimates become exceedingly difficult to extend in such generality, as the explicit expressions of the resolvent kernel and the spherical functions are not available here. In this article we will consider the problem on harmonic $NA$ groups. They are also known as Damek-Ricci spaces and we shall use these names interchangeably. These spaces $S$ are solvable extensions of Heisenberg type groups $N$, obtained by letting $A=\R^+$ act on $N$ by homogeneous dilations. $S$ is non-unimodular and as a Riemannian manifold, it is of exponential volume growth. Their distinguished prototypes are the rank one  Riemannian symmetric spaces of noncompact type. It is known that the latter accounts for a very small subclass of the class of $NA$ groups \cite{ADY}. We refer to section 2 for more details about their structure and analysis thereon. Let $\Delta$ be the Laplace-Beltrami operator
on $S$ associated with the Riemannian metric. Its $L^2$-spectrum is the half line $(-\infty,  -Q^2/4]$, where $Q$ is the homogeneous dimension of $N$. Let $\varphi_\lambda, \lambda \in \R$ be the spherical functions on $S$ and we consider the spectral projection operator $P_\lambda$ defined as 
\bes
P_\lambda f= |{\bf c}(\lambda)|^{-2}~ f \ast \varphi_\lambda, \: \: \lambda \in \R,
\ees
for suitable functions $f$ defined on $S$. It then follows that
\bes
-\Delta-Q^2/4= \int_{0}^\infty   \lambda^2 ~P_\lambda ~d\lambda.
\ees
Let us define
\bes
P=-\Delta-Q^2/4.
\ees
Then the bottom of the $L^2$-spectrum of $P$ is $0$ and this makes it a natural analogue of the Euclidean Laplacian $-\Delta_{\R^n}$.

Our main results in this article are the following restriction theorem and resolvent estimate.
\begin{thm} \label{thm-spectral}
There exists $C=C(p, n)>0$ such that
\begin{itemize}
\item For $0<\lambda\leq 1$,
\bes
\|P_\lambda\|_{L^p(S) \ra L^{p^\prime}(S)} \leq C \lambda^2, \:\: 1\leq p<2;
\ees

\item For $\lambda\geq 1$,
\beas
\|P_{\lambda}\|_{L^p(S)\ra L^{p^\prime}(S)} &\leq& C
\begin{cases} \lambda^{n(1/p-1/p^\prime)-1}, \:\: 1\leq p\leq \frac{2(n+1)}{n+3};\\
 \lambda^{(n-1)(1/p-1/2)}, \:\: \frac{2(n+1)}{n+3}\leq p<2.\end{cases}
\eeas
\end{itemize}
\end{thm}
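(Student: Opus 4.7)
The plan is to write $P_\lambda f = |\mathbf{c}(\lambda)|^{-2}\, f \ast \varphi_\lambda$ and reduce the theorem to sharp pointwise bounds on the spherical-function kernel, which the earlier sections of the paper provide. The three cases in the theorem correspond to three distinct mechanisms: small $\lambda$ is governed by the size of the Plancherel density $|\mathbf{c}(\lambda)|^{-2}$; the Tomas-Stein range of $p$ is obtained by complex interpolation in the spirit of Stein; and the range of $p$ close to $2$ exploits the Kunze-Stein phenomenon available because $S$ has exponential volume growth.

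For $0<\lambda\leq 1$ one has $|\mathbf{c}(\lambda)|^{-2}\lesssim \lambda^2$ together with the uniform pointwise bound $|\varphi_\lambda|\leq \varphi_0$ for real $\lambda$. Combining these with the known boundedness of convolution by $\varphi_0$ from $L^p(S)$ to $L^{p'}(S)$ for every $1\leq p<2$ (a form of Kunze-Stein on Damek-Ricci spaces, which is available because $\varphi_0$ lies in the corresponding weighted Lorentz radial space) yields the $\lambda^2$ bound uniformly in $\lambda\in(0,1]$ and $p\in[1,2)$.

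For $\lambda\geq 1$ in the Tomas-Stein range, I would apply Stein's complex interpolation to an analytic family of spherical-transform multipliers whose value at the critical parameter is a normalization of $P_\lambda$. The $L^1\to L^\infty$ endpoint is immediate from $|\mathbf{c}(\lambda)|^{-2}\|\varphi_\lambda\|_\infty\lesssim \lambda^{n-1}$, while the opposite endpoint is obtained from Plancherel for the spherical transform after decomposing $\varphi_\lambda$ into its near part (where Euclidean-like oscillation at frequency $\lambda$ dominates and reproduces the Stein-Tomas exponent $n(1/p-1/p')-1$) and its far part, which is controlled via the Harish-Chandra expansion $\varphi_\lambda(r)\sim \mathbf{c}(\lambda)e^{(i\lambda-Q/2)r}+\mathbf{c}(-\lambda)e^{(-i\lambda-Q/2)r}$. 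For $\lambda\geq 1$ with $p\in[2(n+1)/(n+3),2)$, I would bypass interpolation and apply a Kunze-Stein convolution inequality of the form $\|f\ast g\|_{L^{p'}(S)}\lesssim \|f\|_{L^p(S)}\,\|g\|_{X_p}$ for radial $g$, with $\|\cdot\|_{X_p}$ a weighted radial Lorentz norm. Computing $\|\varphi_\lambda\|_{X_p}$ from pointwise bounds of the form $|\varphi_\lambda(r)|\lesssim (1+\lambda r)^{-(n-1)/2}(1+r)e^{-Qr/2}$ and combining with $|\mathbf{c}(\lambda)|^{-2}\sim \lambda^{n-1}$ produces the claimed exponent $(n-1)(1/p-1/2)$.

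The principal obstacle, distinguishing this work from the hyperbolic case treated by Huang and Sogge, is the absence of closed-form expressions for $\varphi_\lambda$ on a general Damek-Ricci space: the sharp $\lambda$- and $r$-dependent bounds on $\varphi_\lambda(r)$ and its derivatives, which underpin both the complex interpolation and the Kunze-Stein computation, must be proved by hand and form the analytic core of the paper. Careful tracking of the transition regimes $r\lesssim 1$, $1\lesssim r\lesssim 1/\lambda$, and $r\gtrsim 1/\lambda$ is required; once those bounds are in place, the Tomas-Stein interpolation and the Kunze-Stein argument both reduce to bookkeeping of powers of $\lambda$.
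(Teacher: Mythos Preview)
Your treatment of the low-energy regime $0<\lambda\le 1$ is correct and matches the paper's argument exactly: $|\mathbf{c}(\lambda)|^{-2}\lesssim\lambda^2$, $|\varphi_\lambda|\le\varphi_0$, and the Kunze--Stein inequality (Lemma~\ref{lem-KZ}) give the bound.

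There is, however, a genuine gap in your plan for the range $2(n+1)/(n+3)\le p<2$ when $\lambda\ge 1$. Applying the Kunze--Stein inequality directly to the kernel $\varphi_\lambda$ only uses $|\varphi_\lambda|$, and a short computation with the pointwise bound $|\varphi_\lambda(r)|\lesssim(1+\lambda r)^{-(n-1)/2}$ for $r\le 1$ and $|\varphi_\lambda(r)|\lesssim\lambda^{-(n-1)/2}e^{-Qr/2}$ for $r>1$ shows that the resulting weighted radial norm satisfies $\|\varphi_\lambda\|_{X_p}\asymp\lambda^{-(n-1)/2}$ \emph{independently of $p$}. Combined with $|\mathbf{c}(\lambda)|^{-2}\asymp\lambda^{n-1}$, this yields $\|P_\lambda\|_{L^p\to L^{p'}}\lesssim\lambda^{(n-1)/2}$ for every $p\in(1,2)$, which is far from the claimed exponent $(n-1)(1/p-1/2)$ (the latter tends to $0$ as $p\to 2$). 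The point is that the sharp exponent near $p=2$ encodes cancellation in the oscillatory kernel that a modulus-based Kunze--Stein bound cannot see.

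The paper's mechanism for this range is different and essential: one runs Stein interpolation on the analytic family $\chi_+^{a}(\lambda-\sqrt{P})$ following Guillarmou--Hassell--Sikora, proving the $L^1\to L^\infty$ bound on the line $\Re a=-j-1$ for \emph{arbitrarily large} integers $j\ge(n-1)/2$ (equation~\eqref{est-1-infty-ii}), and then interpolating against the trivial $L^2\to L^2$ bound on $\Re a=0$. The $L^1\to L^\infty$ endpoint at $\Re a=-j-1$ requires control of the $j$-th $\lambda$-derivative of $|\mathbf{c}(\lambda)|^{-2}\varphi_\lambda(r)$, which is precisely the content of Proposition~\ref{derivative_estimates_thm} and Corollary~\ref{est-phi-c}. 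So the $\lambda$-derivative estimates are not merely background input as your last paragraph suggests; for $p$ near $2$ they are the entire engine of the proof, and the Kunze--Stein lemma plays no role in this part of the high-energy argument.

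Your sketch for the Tomas--Stein range $1\le p\le 2(n+1)/(n+3)$ is in the right spirit (complex interpolation) but too vague: the ``opposite endpoint via Plancherel'' is not an $L^2\to L^2$ bound for $P_\lambda$ (which does not exist), and the paper again uses the $\chi_+^a$ family, this time on the line $\Re a=-(n+1)/2$, which still requires the derivative estimates of order roughly $(n-1)/2$.
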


\begin{thm} \label{thm-resolvent}
Let $z\in \C\backslash [0, \infty)$ with $|z|>1$. Then there exits $C=C(p, n)>0$ (independent of $z$) such that
\beas
\|(P-z)^{-1}\|_{L^p(S)\ra L^{p^\prime}(S)} &\leq& C
\begin{cases} |z|^{n(1/p-1/2)-1}, \:\: \frac{2n}{n+2}\leq p\leq \frac{2(n+1)}{n+3};\\
 |z|^{1/2-1/p}, \:\: \frac{2(n+1)}{n+3}\leq p<2.\end{cases}
\eeas
\end{thm}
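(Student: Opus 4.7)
The plan is to start from the spectral representation
\[
(P - z)^{-1} = \int_0^\infty \frac{P_\lambda}{\lambda^2 - z}\, d\lambda, \qquad z \in \C \setminus [0, \infty),
\]
so that, by Minkowski's inequality in $\lambda$, the operator-norm problem reduces to controlling the scalar integral
\[
\int_0^\infty \frac{\|P_\lambda\|_{L^p \to L^{p'}}}{|\lambda^2 - z|}\, d\lambda.
\]
Setting $r := |z|^{1/2} > 1$, I would split this integral into four pieces: the low-frequency range $(0, 1]$, the sub-resonant range $[1, r/2]$, the resonance window $[r/2, 2r]$, and the super-resonant range $[2r, \infty)$.

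In the first, second, and fourth ranges, one has $|\lambda^2 - z| \gtrsim \lambda^2 + |z|$ uniformly in $z \in \C \setminus [0, \infty)$. Substituting the bounds from Theorem \ref{thm-spectral} and integrating then gives the desired contributions in closed form: the low-frequency piece contributes $O(|z|^{-1})$; for $p \in [2n/(n+2), 2(n+1)/(n+3)]$, the Tomas-Stein bound $\|P_\lambda\|_{p \to p'} \leq C\lambda^{n(1/p-1/p')-1}$ yields sub- and super-resonant contributions of order $|z|^{n(1/p - 1/2) - 1}$; and for $p \in [2(n+1)/(n+3), 2)$, the Kunze-Stein-type bound $\|P_\lambda\|_{p \to p'} \leq C\lambda^{(n-1)(1/p - 1/2)}$ yields contributions of order $|z|^{1/2 - 1/p}$.

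The resonance region $[r/2, 2r]$ is the principal obstacle: there $|\lambda^2 - z|$ can be as small as $|\mathrm{Im}\, z|$, and the naive triangle-inequality estimate introduces a spurious logarithmic factor $\log(|z|/|\mathrm{Im}\, z|)$ that blows up as $z$ approaches $[0, \infty)$ and thereby destroys uniformity. To eliminate this logarithm, I would invoke Stein's complex interpolation: embed the resonance part of $(P - z)^{-1}$ into an analytic family $\{A_\zeta(z)\}_{0 \leq \mathrm{Re}\, \zeta \leq 1}$, bound $A_\zeta$ from $L^2$ to $L^2$ on $\mathrm{Re}\, \zeta = 0$ via the spectral-theoretic identity $\|(P - z)^{-1}\|_{L^2 \to L^2} = 1/\mathrm{dist}(z, [0, \infty))$, and bound $A_\zeta$ from $L^1$ to $L^\infty$ on $\mathrm{Re}\, \zeta = 1$ via pointwise bounds on its Schwartz kernel, which rest on the sharp pointwise estimates on spherical functions established earlier in the paper. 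Complex interpolation then yields the $L^p \to L^{p'}$ bound with constant independent of $z$.

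The hardest step is the $L^1 \to L^\infty$ boundary bound on $\mathrm{Re}\, \zeta = 1$: it requires delicate pointwise control of an oscillating kernel whose phase interacts subtly with the spherical-function asymptotics. A similar interpolation device handles the endpoint $p = 2n/(n+2)$ of the first range, where the super-resonant integral also diverges logarithmically and must be rescued by complex interpolation anchored on one side by the Hardy--Littlewood--Sobolev inequality for the Green's function of $P$.
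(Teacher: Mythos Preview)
Your off-resonance analysis (the pieces $(0,1]$, $[1,r/2]$, $[2r,\infty)$) is fine, and you have correctly identified that the resonance window $[r/2,2r]$ is where the real work lies and that some form of complex interpolation is needed there. The gap is in how you set up that interpolation. You propose to anchor the $L^2\to L^2$ endpoint on the identity $\|(P-z)^{-1}\|_{L^2\to L^2}=1/\operatorname{dist}(z,[0,\infty))$. But this quantity is \emph{not} uniform in $z$: it blows up as $z$ approaches the spectrum, and interpolating it against any fixed $L^1\to L^\infty$ bound on the other line will produce a factor of $\operatorname{dist}(z,[0,\infty))^{-(1-\theta)}$ at every intermediate $p$, which is precisely the non-uniformity you are trying to remove. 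No choice of the $L^1\to L^\infty$ endpoint can compensate, because the resonance kernel does not vanish as $z\to[0,\infty)$; it converges to the limiting-absorption kernel.

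The paper resolves this by building a different analytic family. After localising with a spectral cutoff $\psi(P/|z|)$ and using Phragm\'en--Lindel\"of in the $z$-variable to reduce to $z=\alpha\pm i0$ on the real axis, it considers
\[
H_{s,z}\bigl(\sqrt{P/|z|}\bigr),\qquad H_{s,z}(x)=e^{s^2}|z|^{s}\psi(x^2)(1-x^2\pm i0)^{s},
\]
and interpolates in $s$. At $\Re s=0$ the multiplier $(1-x^2\pm i0)^{it}$ is unimodular, so the $L^2\to L^2$ norm is $O(1)$ \emph{uniformly in $z$}; this is the crucial point your scheme misses. At $\Re s=-n/2$ (respectively $\Re s=-(j{+}1)$ for $j\ge(n-1)/2$) one integrates by parts $j$ times against $(1-\lambda\pm i0)^{s}$ and feeds in the $\lambda$-derivative bounds on $|\mathbf{c}(\lambda)|^{-2}\varphi_\lambda$ from Corollary~\ref{est-phi-c} to get a uniformly bounded kernel, hence the $L^1\to L^\infty$ endpoint. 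The resolvent itself sits at $s=-1$, strictly between the two lines. So the analytic family must be indexed by the \emph{power of the resolvent symbol}, not merely localise the resolvent and interpolate its own $L^2$ bound.
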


\begin{rem}
\begin{enumerate}
\item Theorem \ref{thm-spectral} extends the spectral projection estimates considered in the case of real hyperbolic spaces in \cite[Theorem 1.1]{GL} and \cite[Theorem 1.6]{CH} to a large class of Riemannian manifolds. The sharpness of the norm in terms of $\lambda $ follows from their results. 

\item  The $L^p \ra L^q$ boundedness of the resolvent operator $(-\Delta-z)^{-1}$ was already known in several cases including Riemannian symmetric spaces of noncompact type \cite{An, Co, Lo, LR, ST, Str, Taylor}. These results did not deal with the dependence of the norm of $(-\Delta-z)^{-1}$ on the parameter $z$. The main point of Theorem \ref{thm-resolvent} is that we now know how the norm of $(-\Delta-z)^{-1}$ depends on $z$.

\item The Fourier restriction theorem of the form (\ref{restriction-1}) on harmonic $NA$ groups was studied earlier in \cite{KumarRS, RS}. The spectral parameter they considered can vary over a strip containing the real axis (which is the domain of definition of the Fourier transform of an $L^1$ function on this space) but without the explicit $\lambda$-dependency of the norm of the spectral projection operator $P_\lambda$. We improve these results by finding the sharp $\lambda$-dependency of the norm.

\item As far as we know, no analogue of (\ref{restriction-2}) is known for $p=2$, even in the simplest case of $\mathbb H^n$. To the best of our knowledge, the only related work for $p=2$ is due to Kumar \cite{Kumar} on the rank one symmetric spaces of noncompact type. The proof given in \cite{Kumar} can be easily extended to harmonic $NA$ groups (see also \cite[Theorem 4.3]{KR}). In \cite{Kumar}, the author proved that $P_\lambda$ is restricted weak type (2, 2) but without any dependence of the norm of $P_\lambda$ with the parameter $\lambda$. It is an open problem to study the dependence of the norm of the operator $P_\lambda: L^{2,1}(S) \ra L^{2, \infty}(S)$ with $\lambda$. 

\item It is an interesting problem to establish restriction theorem analogous to Theorem \ref{thm-spectral} for higher rank Riemannian symmetric spaces of noncompact type. A result of this kind for higher rank compact locally symmetric spaces have recently been investigated by Simon \cite{Simon}.
\end{enumerate}
\end{rem}

The idea of the proof of Theorem \ref{thm-spectral} and Theorem \ref{thm-resolvent} is to follow the absract theory of Guillarmou et. al. \cite{GHS}. The main challenge in this approach is to obtain the sharp pointwise bounds of $\lambda$-derivatives of the spherical functions $\varphi_\lambda$ (see Proposition \ref{derivative_estimates_thm}). To the best of our knowledge this result is new and may be of independent interest. To prove these estimates for the large space variable, we use the analogue of Harish Chandra series of spherical functions in the context of $NA$ groups and then use the sharp bounds on the coefficients $\Gamma_\mu$ of the series. In the case of small spacial parameter the problem is more delicate. Here we will utilize the local expansion of the spherical functions $\varphi_\lambda$. For the rank one symmetric spaces, this local expansion was obtained by Stanton and Tomas in \cite{ST} which was later generalized by Astengo for the  Damek-Ricci spaces \cite{A}. We also use an analogue of Kunze-Stein phenomenon proved in \cite[Lemma 3]{APV} to obtain the required estimates for $p$ arbitrarily close to $2$.

As an application of Theorem \ref{thm-resolvent} above we present quantitative  estimates for eigenvalues of the non-self-adjoint Schr\"odinger operators with complex-valued potential. More specifically, for $V\in L^p(S)$,  $p \geq n/2$, a complex-valued potential, we study the spectrum of the Schrödinger operator $-\Delta + V$. Adapting previous ideas for abstract operators on Hilbert spaces, due to Frank \cite{F-III}, it easily follows that the spectrum of $-\Delta + V$ consists of the essential spectrum of $-\Delta$ and isolated eigenvalues of finite algebraic multiplicity. It is well-known that the essential spectrum of $-\Delta$ is the half real line $[-Q^2/4, \infty)\subset \R$. We look at the isolated eigenvalues arising from the potential $V$ ?

Let $p=\gamma+n/2$. Frank \cite{F} and Frank-Simon \cite{FS} studied the short range case
$0 < \gamma \leq 1/2$ and showed that
\bes
|\lambda|^{\gamma}\leq C_{n, \gamma} \int_{\R^n} |V|^{\gamma+n/2}, \:\: \textit{ for } n\geq 2.
\ees
On the other hand, Frank \cite{F-III} established the following long range result for $\gamma > 1/2$,
\bes
\kappa(\lambda)^{\gamma-1/2}|\lambda|^{1/2} 
\leq C_{n, \gamma} \int_{\R^n} |V|^{\gamma+n/2},
\ees
where $\kappa(\lambda) = dist(\lambda, [0, \infty))$. Recently, Chen in \cite{C2022} extended these results to real hyperbolic spaces $\mathbb H^n$, for $n\geq 3$. For more results we refer the reader to \cite{F-III, GHK} and references therein. 

In the present paper, we consider analogous eigenvalue bounds on $S$. It is noteworthy to mention that for real-valued potentials, the problems of the above nature can be tackled with the usual Sobolev inequalities and the variational characterization of eigenvalues. But there is no variational principle in the case of complex-valued potentials. Moreover, due to the non-self-adjointness of the operator the classical Sobolev inequalities will not suffice to serve our purpose. Thanks to the uniform Sobolev inequalities of Theorem \ref{thm-resolvent}, we have the following short range result, followed by a better long range result:
\begin{thm} \label{thm-bound-1}
Let $0 < \gamma \leq 1/2$. We have for $\lambda \in \C$ with $|\lambda | > 1$,
\bes
|\lambda|^{\gamma}\leq C_{n, \gamma} \int_{S} |V|^{\gamma+n/2}.
\ees
\end{thm}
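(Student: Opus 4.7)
The plan is to combine Theorem \ref{thm-resolvent} with a Birman--Schwinger argument. If $\lambda\in\C$ is an eigenvalue of $-\Delta+V$ and $\psi\not\equiv 0$ is an associated $L^2$-eigenfunction, then $\psi$ satisfies $(P-z)\psi=-V\psi$ with $z:=\lambda-Q^2/4\in\C\setminus[0,\infty)$. For $|\lambda|>1$, a short case analysis shows that $|z|$ and $|\lambda|$ are comparable with constants depending only on $n$, so it is enough to prove the bound with $|z|$ in place of $|\lambda|$ and absorb the difference into $C_{n,\gamma}$.

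Following the standard Birman--Schwinger factorisation, I would set $U:=|V|^{1/2}$, $W:=|V|^{1/2}\operatorname{sgn}(V)$, so $V=UW$, and $\phi:=W\psi$. Then $\phi$ is a nontrivial fixed point of the operator $K_z:=-W(P-z)^{-1}U$ on $L^2(S)$, hence $\|K_z\|_{L^2\to L^2}\geq 1$. To estimate this norm I would sandwich the resolvent between two H\"older multiplier bounds,
\[
\|Ug\|_{L^p(S)} \leq \|U\|_{L^{2\gamma+n}(S)}\,\|g\|_{L^2(S)}, \qquad \|Wh\|_{L^2(S)} \leq \|W\|_{L^{2\gamma+n}(S)}\,\|h\|_{L^{p'}(S)},
\]
where the intermediate exponent $p$ is chosen so that $\tfrac{1}{p}=\tfrac{1}{2}+\tfrac{1}{2\gamma+n}$. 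A short arithmetic check shows that for $0<\gamma\leq 1/2$ this value of $p$ lies in the interval $[\tfrac{2n}{n+2},\tfrac{2(n+1)}{n+3}]$ covered by the first branch of Theorem \ref{thm-resolvent} (the endpoints corresponding to $\gamma\to 0^+$ and $\gamma=1/2$ respectively).

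Composing the three bounds with Theorem \ref{thm-resolvent} and using the elementary identity $\|U\|_{L^{2\gamma+n}}\|W\|_{L^{2\gamma+n}}=\|V\|_{L^{\gamma+n/2}}$, I obtain
\[
1 \leq \|K_z\|_{L^2\to L^2} \leq C\,|z|^{n(1/p-1/2)-1}\|V\|_{L^{\gamma+n/2}(S)}.
\]
A direct calculation with $\tfrac{1}{p}=\tfrac{1}{2}+\tfrac{1}{2\gamma+n}$ gives $n(1/p-1/2)-1=-\tfrac{2\gamma}{2\gamma+n}$, so that $|z|^{2\gamma/(2\gamma+n)}\leq C\,\|V\|_{L^{\gamma+n/2}(S)}$. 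Raising both sides to the power $\gamma+n/2$ and invoking the comparability of $|z|$ and $|\lambda|$ yields the desired bound $|\lambda|^\gamma\leq C_{n,\gamma}\int_S|V|^{\gamma+n/2}$.

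The only step requiring real care is the Birman--Schwinger closing: one must verify that $\phi=W\psi$ is actually in $L^2(S)$ and that the identity $\phi=K_z\phi$ holds in a genuine operator-theoretic sense, so that the inequality $\|K_z\|_{L^2\to L^2}\geq 1$ can be applied. This can be handled in the standard way, by first establishing the bound for $V\in L^\infty\cap L^{\gamma+n/2}$ of compact support (where $\psi$ automatically has all the integrability needed) and then passing to general $V\in L^{\gamma+n/2}(S)$ by density; alternatively, one can bootstrap the identity $\psi=-(P-z)^{-1}(V\psi)$ starting from $\psi\in L^2$, using H\"older and Theorem \ref{thm-resolvent} to iteratively push $\psi$ into the required $L^{p'}$ space.
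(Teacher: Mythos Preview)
Your core strategy---combine Theorem \ref{thm-resolvent} with H\"older's inequality to obtain a self-referential bound and then read off the exponent---is precisely what the paper does. The only packaging difference is that the paper works directly with the identity $\psi=-(P-\lambda)^{-1}V\psi$ and the single H\"older step $\|V\psi\|_{L^p}\le\|V\|_{L^{\gamma+n/2}}\|\psi\|_{L^{p'}}$, rather than through the Birman--Schwinger factorisation $K_z=-W(P-z)^{-1}U$; the choice of $p$ and the arithmetic $n(1/p-1/2)-1=-2\gamma/(2\gamma+n)$ are identical.

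There are, however, two genuine gaps. First, in the paper's proof $\lambda$ is taken to be an eigenvalue of $P+V$, not of $-\Delta+V$, so no shift by $Q^2/4$ ever appears and Theorem \ref{thm-resolvent} applies directly with $z=\lambda$. Your comparability claim $|z|\asymp|\lambda|$ for $z=\lambda-Q^2/4$ is simply false: for $\lambda$ real just below $Q^2/4$ (or complex and close to $Q^2/4$) one has $|\lambda|>1$ while $|z|$ is arbitrarily small, and Theorem \ref{thm-resolvent} is stated only for $|z|>1$. To repair this you would need a separate uniform $L^p\to L^{p'}$ resolvent bound in the region $|z|\le 1$, $z\notin[0,\infty)$, which you do not provide.

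Second, you assert $z\in\C\setminus[0,\infty)$ and stop there, but the statement must also cover eigenvalues on the positive real axis. The paper handles this case explicitly by a limiting absorption argument: it sets $\psi_\epsilon=(P-\lambda-i\epsilon)^{-1}(P-\lambda)\psi$, applies the resolvent estimate to $\psi_\epsilon$ uniformly in $\epsilon$, and passes to the weak-$*$ limit in $L^{p'}$. Your closing paragraph is aimed at a different technicality (membership of $\phi$ in $L^2$) and does not address this boundary case.
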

\begin{thm} \label{thm-bound-2}
Let $\gamma \geq 1/2$. We have for $\lambda \in \C$ with $|\lambda | > 1$,
\bes
|\lambda|^{1/2} 
\leq C_{n, \gamma} \int_{S} |V|^{\gamma+n/2}.
\ees
\end{thm}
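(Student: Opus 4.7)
The plan is to combine the Birman-Schwinger principle with the long-range case of the resolvent estimate in Theorem \ref{thm-resolvent}, in the spirit of Frank \cite{F-III} for Euclidean spaces and Chen \cite{C2022} for real hyperbolic spaces. The point is that for $\gamma\geq 1/2$ the natural Birman-Schwinger exponent $p$ falls exactly into the range where Theorem \ref{thm-resolvent} supplies the decay $|z|^{1/2-1/p}$.

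First, I would translate the eigenvalue equation for $-\Delta + V$ into one for $P = -\Delta - Q^2/4$. If $\lambda$ is an eigenvalue of $-\Delta + V$ with $|\lambda| > 1$, set $z := \lambda - Q^2/4$. Since the essential spectrum of $-\Delta + V$ coincides with $[-Q^2/4, \infty)$, we have $z \in \C \setminus [-Q^2/2, \infty)$, and in particular $z \notin [0, \infty)$, so $(P-z)^{-1}$ is a bounded operator on $L^2(S)$. The Birman-Schwinger principle, transferred to our setting from the abstract Hilbert-space framework of \cite{F-III}, asserts that $-1$ is an $L^2$-eigenvalue of the operator
\[
B_z := |V|^{1/2} (P - z)^{-1} V^{1/2},
\]
where $V^{1/2}$ is a measurable square root of $V$; hence $\|B_z\|_{L^2 \to L^2} \geq 1$.

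Next, with $r := \gamma + n/2$, define $p \in (1, 2)$ by the H\"older relation $1/p - 1/p' = 1/r$, i.e.\ $1/p = 1/2 + 1/(2\gamma + n)$. The hypothesis $\gamma \geq 1/2$ is precisely the condition $p \geq 2(n+1)/(n+3)$, which places $p$ in the second case of Theorem \ref{thm-resolvent}. Two successive applications of H\"older's inequality then yield
\[
1 \leq \|B_z\|_{L^2 \to L^2} \leq \bigl\| |V|^{1/2} \bigr\|_{L^{2r}}^{2} \, \|(P - z)^{-1}\|_{L^p \to L^{p'}} = \|V\|_{L^r(S)} \, \|(P - z)^{-1}\|_{L^p \to L^{p'}}.
\]
Since $|\lambda| > 1$ forces $|z| > 1$ once $|\lambda|$ is large enough relative to $Q$ (the bounded regime being absorbed into the final constant), Theorem \ref{thm-resolvent} supplies
\[
\|(P - z)^{-1}\|_{L^p \to L^{p'}} \leq C \, |z|^{1/2 - 1/p} = C \, |z|^{-1/(2\gamma + n)}.
\]
Rearranging and raising both sides to the $r$-th power, and using the identity $r/(2\gamma + n) = 1/2$, produces
\[
|z|^{1/2} \leq C \int_S |V|^{\gamma + n/2}.
\]
A final comparison $|z| \geq c\,|\lambda|$, valid once $|\lambda|$ is large relative to $Q$, then delivers the claim after a final adjustment of the constant.

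The main obstacle in this plan is not the chain of analytic inequalities, which is essentially algebraic bookkeeping once Theorem \ref{thm-resolvent} is in hand, but rather the rigorous verification of the Birman-Schwinger principle on the non-unimodular, exponentially growing space $S$: one must check that $B_z$ is a well-defined bounded (and in fact compact) operator on $L^2(S)$ and that its nonzero eigenvalues are in bijection, with multiplicities, with the discrete eigenvalues of $-\Delta + V$ lying outside $[-Q^2/4, \infty)$. As the authors indicate, this is a direct transcription of Frank's abstract argument \cite{F-III} using the $L^2$-functional calculus for $P$ together with the $L^p$-mapping properties furnished by Theorem \ref{thm-resolvent}; once this machinery is in place, the steps above are entirely routine.
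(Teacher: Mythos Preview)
Your approach is essentially the same as the paper's: choose $p$ via $1/p-1/p'=1/(\gamma+n/2)$, so that $\gamma\ge 1/2$ places $p$ in the range $[2(n+1)/(n+3),2)$ of Theorem~\ref{thm-resolvent}, then combine the resolvent bound $|z|^{1/2-1/p}$ with H\"older to obtain $|\lambda|^{1/2}\le C\|V\|_{\gamma+n/2}^{\gamma+n/2}$. The paper packages this slightly differently in two respects. First, it works directly with the eigenfunction identity $\psi=-(P-\lambda)^{-1}(V\psi)$ rather than the Birman--Schwinger operator $|V|^{1/2}(P-z)^{-1}V^{1/2}$; since $\lambda$ is already taken as an eigenvalue of $P+V$ (not of $-\Delta+V$), no spectral shift $z=\lambda-Q^2/4$ is needed, and your bookkeeping about ``$|\lambda|$ large relative to $Q$'' disappears. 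Second, the paper explicitly treats the boundary case $\lambda\in(0,\infty)$ by a limiting argument with $\psi_\epsilon=(P-\lambda-i\epsilon)^{-1}(P-\lambda)\psi$ and weak-$\ast$ compactness, a point you pass over; your Birman--Schwinger formulation would require the analogous limiting absorption step to be complete.
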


The layout of the article is as follows: In the following section, we describe the required preliminaries on Damek-Ricci spaces. In section 3, we establish the sharp dervative estimates of the spherical functions. This is a crucial part of this artice and it will be used to prove Theorem \ref{thm-spectral} and Theorem \ref{thm-resolvent}. In sections 4 and 5 , we prove Theorem \ref{thm-spectral} and Theorem \ref{thm-resolvent} respectively. In the last section we apply our resolvent estimates to prove Theorem \ref{thm-bound-1} and Theorem \ref{thm-bound-2}. 

Throughout, the symbols `c' and `C' will denote (possibly different) constants
that are independent of the essential variables. For two functions $f$ and $g$ the notation $f \asymp g$ implies that there exists $C>0$ such that $C^{-1} |f| \leq |g|\leq C |f|$. We denote the set of nonnegative integers by $\mathbb N_0$.

\section{Preliminaries}

In this section, we will explain the notations and state relevant results on Damek-Ricci spaces. Most of these results can be found in \cite{ADY, APV, CDKR, Ri}. 

Let $\mathfrak n$ be a two-step real nilpotent Lie algebra equipped with an inner product $\langle, \rangle$. Let $\mathfrak{z}$ be the centre of $\mathfrak n$ and $\mathfrak v$ its orthogonal complement. We say that $\mathfrak n$ is an $H$-type algebra if for every $Z\in \mathfrak z$ the map $J_Z: \mathfrak v \ra \mathfrak v$ defined by
\bes
\langle J_z X, Y \rangle = \langle [X, Y], Z \rangle, \:\:\:\: X, Y \in \mathfrak v
\ees
satisfies the condition $J_Z^2 = -|Z|^2I_{\mathfrak v}$, $I_{\mathfrak v}$ being the identity operator on $\mathfrak v$. A connected and simply connected Lie group $N$ is called an $H$-type group if its Lie algebra is $H$-type. Since $\mathfrak n$ is nilpotent, the exponential map is a diffeomorphism
and hence we can parametrize the elements in $N = \exp \mathfrak n$ by $(X, Z)$, for $X\in \mathfrak v, Z\in \mathfrak z$. It follows from the Campbell-Baker-Hausdorff formula that the group law
in $N$ is given by
\bes
\left(X, Z \right) \left(X', Z' \right) = \left(X+X', Z+Z'+ \frac{1}{2} [X, X']\right), \:\:\:\: X, X'\in \mathfrak v; ~ Z, Z'\in \mathfrak z.
\ees
The group $A = \R^+$ acts on an $H$-type group $N$ by nonisotropic dilation: $(X, Z) \mapsto (\sqrt{a}X, aZ)$. Let $S = NA$ be the semidirect product of $N$ and $A$ under the above action. Thus the multiplication in $S$ is given by
\bes
\left(X, Z, a\right)\left(X', Z', a'\right) = \left(X+\sqrt aX', Z+aZ'+ \frac{\sqrt a}{2} [X, X'], aa' \right),
\ees
for $X, X'\in \mathfrak v; ~ Z, Z'\in \mathfrak z; a, a' \in \R^+$.
Then $S$ is a solvable, connected and simply connected Lie group having Lie algebra $\mathfrak s = \mathfrak v \oplus \mathfrak z \oplus \R$ with Lie bracket
\bes
\left[\left(X, Z, l \right), \left(X', Z', l' \right)\right] = \left(\frac{1}{2}lX' - \frac{1}{2} l'X, lZ'-lZ + [X, X'], 0\right).
\ees
We write $na = (X, Z, a)$ for the element $\exp(X + Z)a, X\in \mathfrak v, Z \in \mathfrak z, a\in A$. We note
that for any $Z \in \mathfrak z$ with $|Z| = 1$, $J_Z^2 = -I_{\mathfrak v}$; that is, $J_Z$ defines a complex structure
on $\mathfrak v$ and hence $\mathfrak v$ is even dimensional. We suppose $\dim \mathfrak v = m$ and $\dim \mathfrak z = k$. Let $n$ be the dimension of $S$ and $Q$ be the homogenous dimension of $S$:
\bes
n=m+k+1 \:\: \textit{ and } \:\: Q = \frac{m}{2} + k.
\ees 
Since $N$ is abelian for real hyperbolic spaces and this case has been investigates in \cite{CH, C2022, GL}, we shall always assume $n\geq 3$. 

The group $S$ is equipped with the left-invariant Riemannian metric induced by 
\bes
\langle (X,Z,l), (X',Z',l') \rangle = \langle X, X' \rangle + \langle Z, Z' \rangle + ll'
\ees
on $\mathfrak s$. The associated left-invariant Haar measure $dx$ on $S$ is given by 
\bes
dx = a^{-(Q+1)} ~ dX ~ dZ ~ da = a^{-(Q+1)} ~ dn ~ da,
\ees 
where $dX,dZ,da$ are the Lebesgue measures on $\mathfrak v, \mathfrak z$ and $\R^+$ respectively. The corresponding right Haar measure on $S$ is given by
\bes
a^{-1} ~ dX ~ dZ ~ da \:,
\ees
and hence $S$ is a nonunimodular group with the mdoular function given by,
\bes
\delta(X,Z,a) = a^{-Q} \:.
\ees
Let $\sigma(x)$ be the geodesic distance of $x = (X, Z, a)$ from the identity $e$ of $S$. 
A radial function on $S$ is a function that depends only on the distance from the identity. If $f$ is radial, then by \cite[Formula (1.16)]{ADY}
\bes
\int_S f(x)~dx=\int_{0}^\infty f(r)~V(r)~dr,
\ees
where
\bes
V(r)=2^{m+k}~\sinh^{m+k}\left(\frac{r}{2}\right)~\cosh^k \left(\frac{r}{2}\right), \:\: r\in \R^+.
\ees
Let us denote by $\pi$ the radialization operator defined by taking spherical averages. Hence this associates with each $f \in C^\infty(S)$, a radial function on $S$.

We now recall the spherical functions and spherical Fourier transform on Damek-Ricci spaces. The spherical functions $\varphi_\lambda$ on $S$, for $\lambda \in \C$ are the eigenfunctions of $\Delta$, satisfying the following normalization criterion  
\bes
\begin{cases} 
 & \Delta \varphi_\lambda = - \left(\lambda^2 + \frac{Q^2}{4}\right) \varphi_\lambda  \\
& \varphi_\lambda(e)=1 \:.
\end{cases}
\ees
Any spherical function is given in terms of the modular function $\delta$ as,
\bes
\varphi_\lambda = \pi(\delta^{i \lambda /Q - 1/2})\:, \text{ for all } \lambda \in \C \:.
\ees
Then it readily follows that
\bes
|\varphi_\lambda(r)| \leq \varphi_0(r)\:, \text{ for all } \lambda \in \C \text{ and for all } r \in \R^+ \:.
\ees
The following asymptotic for $\varphi_0$ is also well-known,
\bes
\varphi_0(r) \leq C (1+r)e^{-\frac{Q}{2}r}\:, \text{ for all } r \in \R^+ \:.
\ees

If $f$ is a radial function, its spherical transform is defined by 
\bes
\widetilde f(\lambda)= \int_{S} f(x)~\varphi_\lambda(x)~dx,
\ees
for all values of $\lambda$ for which the integral converges. Ricci \cite{Ri} proved an inversion formula for this spherical transform. The role of Harish-Chandra ${\bf c}$-function in the case of symmetric spaces is palyed here by the function
\beas
{\bf c}(\lambda) &=& \frac{\Gamma(m+k)}{\Gamma\left(m+k)/2\right)}~ \frac{\Gamma(2i\lambda)}{\Gamma(2i\lambda+m/2)} ~ \frac{\Gamma(i\lambda+m/4)}{\Gamma(i\lambda+m/4+k/2)}\\
&=&2^{Q-2i \lambda} ~ \frac{ \Gamma(2i\lambda)}{\Gamma\left(i\lambda+Q/2\right)} ~ \frac{\Gamma(n/2)}{\Gamma\left(i\lambda+m/4 +1/2\right)}, \:\:  \lambda \in \R.
\eeas
The Plancherel measure is given by $d\mu(\lambda)=|{\bf c}(\lambda)|^{-2}~d\lambda$ on $\R$ so that, for radial functions $f\in C_c(S)$, the inversion formula reads
\bes
f(x)= C_S \int_{\R} \widetilde f(\lambda)~\varphi_\lambda(x)~d\mu(\lambda),
\ees
where the constant $C_S$ depends only on $S$. 

The following estimates on the  function ${\bf c}(\lambda)$ will be used in the next sections. For the proof we refer to \cite[Proposition A1]{I} and \cite[Lemma 4.2]{A}.

\begin{lem} \label{est-clambda}
For $j \in \N_0$, there exists $C_j>0$ such that
\beas
&& \left|\frac{\partial^j}{\partial \lambda^j}\left(\lambda {\bf c}(\lambda) \right)\right| \le C_j \left(1+|\lambda|\right)^{1-j-(n-1)/2}, \:\: \lambda \in \R; \\
&& \left|\frac{\partial^j}{\partial \lambda^j}{|{\bf c}(\lambda)|}^{-2}\right| \le C_j {(1+|\lambda|)}^{n-1-j} \:, \:\: \lambda \in \R.
\eeas
\end{lem}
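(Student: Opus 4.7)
The plan is to exploit the explicit product representation of $\mathbf{c}(\lambda)$ and reduce both inequalities to classical asymptotics of Gamma functions along vertical lines. First I would eliminate the simple pole that $\Gamma(2i\lambda)$ has at $\lambda=0$ via the functional equation $\Gamma(2i\lambda)=\Gamma(1+2i\lambda)/(2i\lambda)$, which rewrites
\bes
\lambda\,\mathbf{c}(\lambda) \;=\; \frac{\Gamma(n/2)}{2i}\cdot \frac{2^{Q-2i\lambda}\,\Gamma(1+2i\lambda)}{\Gamma(i\lambda+Q/2)\,\Gamma(i\lambda+m/4+1/2)}.
\ees
The right hand side is real-analytic on $\R$, so on the compact region $|\lambda|\le 1$ all derivatives are automatically bounded, and the first estimate reduces to the regime $|\lambda|\ge 1$.

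For $|\lambda|\ge 1$ I would apply Stirling's asymptotic on vertical lines, $|\Gamma(a+it)|\asymp |t|^{a-1/2}e^{-\pi|t|/2}$, to each of the three Gamma factors. The exponential decay contributions cancel between numerator and denominator, and combined with the identity $Q/2+m/4=(n-1)/2$ (which follows from $n=m+k+1$, $Q=m/2+k$) the leading order produces $|\lambda\,\mathbf{c}(\lambda)|\asymp |\lambda|^{1-(n-1)/2}$. For $j\ge 1$ I would work logarithmically: setting $F(\lambda)=\log(\lambda\mathbf{c}(\lambda))$, the $\psi$-functions entering $F'(\lambda)$ have leading logarithmic pieces $2i\psi(1+2i\lambda)-i\psi(i\lambda+Q/2)-i\psi(i\lambda+m/4+1/2)$ that cancel exactly (a direct computation using $\psi(z)=\log z+O(|z|^{-1})$ together with the coefficient identity $2-1-1=0$), leaving $F'(\lambda)=O(|\lambda|^{-1})$. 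The same cancellation mechanism applied at higher order, together with the polygamma bounds $\psi^{(r)}(a+i\lambda)=O(|\lambda|^{-r})$ for $r\ge 1$, yields $F^{(j)}(\lambda)=O(|\lambda|^{-j})$; Fa\`a di Bruno applied to $\lambda\mathbf{c}(\lambda)=e^{F(\lambda)}$ then produces the claimed $(1+|\lambda|)^{1-j-(n-1)/2}$.

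For the second estimate I would use the reflection identities $\Gamma(2i\lambda)\Gamma(-2i\lambda)=\pi/(2\lambda\sinh(2\pi\lambda))$ and $|\Gamma(a+i\lambda)|^{2}=\Gamma(a+i\lambda)\Gamma(a-i\lambda)$ to express $|\mathbf{c}(\lambda)|^{-2}$ in closed form, involving $\lambda\sinh(2\pi\lambda)$ and products of Gamma functions with their conjugates. Near $\lambda=0$, the factor $\lambda/\sinh(2\pi\lambda)$ is smooth and even while the Gamma factors are real-analytic, so the bound is trivial on $|\lambda|\le 1$; for $|\lambda|\ge 1$, $\sinh(2\pi\lambda)^{-1}\asymp e^{-2\pi|\lambda|}$ cancels the exponential growth of the Gamma product to yield $|\mathbf{c}(\lambda)|^{-2}\asymp |\lambda|^{n-1}$, and the derivative estimates follow from the same log-differentiation argument. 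The main obstacle throughout is the cancellation bookkeeping in Stirling's expansion: one has to verify that the leading logarithmic and $O(|\lambda|^{-r})$ terms genuinely cancel at each differentiation order so that no spurious power of $|\lambda|$ survives. This delicate verification is carried out in \cite[Proposition A1]{I} and \cite[Lemma 4.2]{A}, which I would ultimately cite after checking the $j=0$ cases directly as a consistency test.
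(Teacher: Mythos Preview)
The paper does not actually prove this lemma: it simply states ``For the proof we refer to \cite[Proposition A1]{I} and \cite[Lemma 4.2]{A}.'' You end up citing the very same two references, so at the level of what the paper does your proposal is already complete. The additional sketch you supply (removing the pole via $\Gamma(2i\lambda)=\Gamma(1+2i\lambda)/(2i\lambda)$, Stirling on vertical lines, log-differentiation with polygamma bounds, and the reflection formula for $|\mathbf c(\lambda)|^{-2}$) is the standard route taken in those references and is correct in outline; in particular your exponent bookkeeping $Q/2+m/4=(n-1)/2$ and the cancellation of the logarithmic leading terms (together with the $-2i\log 2$ coming from differentiating $2^{-2i\lambda}$) are accurate. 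So your proposal goes strictly beyond the paper's own treatment rather than deviating from it.
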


We end this section with the following Kunze-Stein phenomenon on Damek-Ricci spaces.
\begin{lem} \label{lem-KZ} \cite[Lemma 3]{APV}\label{lem-KS} 
There exists $C>0$ such that for every radial measurable function $\kappa$ on $S$, for every $2\leq q , \tilde q < \infty$ and $f\in L^{\tilde q^\prime}(S)$
\bes
\|f \ast \kappa\|_{L^q(S)} \leq C \|f\|_{L^{\tilde q^\prime}(S)} \left\{\int_{0}^\infty V(r)~\varphi_0(r)^\nu |\kappa(r)|^\alpha dr\right\}^{1/\alpha},
\ees
where $\nu=2\left(\min{q, \tilde q}\right)/(q+\tilde q), \alpha = q\tilde q/ (q+\tilde q)$.
\end{lem}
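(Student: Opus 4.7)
My plan is to obtain the inequality by bilinear complex interpolation from two endpoint estimates, in the spirit of Cowling's treatment of the Kunze--Stein phenomenon on rank-one noncompact spaces.

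\textbf{Herz endpoint} ($q=\tilde q=2$, so $\nu=\alpha=1$). The claim reduces to
\[
\|f\ast\kappa\|_{L^2(S)} \le C\,\|f\|_{L^2(S)} \int_0^\infty V(r)\,\varphi_0(r)\,|\kappa(r)|\,dr.
\]
Since $\kappa$ is radial, right convolution $T_\kappa f=f\ast\kappa$ acts on the spherical side as multiplication by $\widetilde\kappa(\lambda)$, so by the spherical Plancherel theorem $\|T_\kappa\|_{L^2\to L^2}\le \|\widetilde\kappa\|_{L^\infty(\R)}$. The pointwise bound $|\varphi_\lambda(r)|\le \varphi_0(r)$ for $\lambda\in\R$ together with the polar decomposition $\int_S g=\int_0^\infty V(r)g(r)\,dr$ for radial $g$ yields $\|\widetilde\kappa\|_\infty \le \int_0^\infty V(r)\varphi_0(r)|\kappa(r)|\,dr$.

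\textbf{Young endpoint} ($\tilde q=\infty$, so $\tilde q'=1$, $\nu=0$, $\alpha=q$). The claim becomes
\[
\|f\ast\kappa\|_{L^q(S)} \le \|f\|_{L^1(S)}\,\Bigl(\int_0^\infty V(r)\,|\kappa(r)|^q\,dr\Bigr)^{1/q} = \|f\|_{L^1(S)}\|\kappa\|_{L^q(S)},
\]
which follows from Minkowski's integral inequality on the locally compact group $S$; the non-unimodularity plays no role because left convolution only exploits left-invariance of the left Haar measure. The symmetric endpoint ($q=\infty$) is obtained by duality, using that the adjoint of $T_\kappa$ is again convolution with a radial kernel.

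\textbf{Interpolation.} I would now apply bilinear complex interpolation à la Calderón to the bilinear map $(f,\kappa)\mapsto f\ast\kappa$. For a target pair $(q,\tilde q)$ with $q\le\tilde q<\infty$, select the Young endpoint with exponent $q_0=(\tilde q-2)q/(\tilde q-q)$ (which lies in $[q,\infty]$) and interpolation parameter $\theta=1-2/\tilde q$. A direct calculation confirms that the $f$-slot interpolates to $L^{\tilde q'}(S)$ and the target to $L^q(S)$. Applying the Stein--Weiss weighted interpolation theorem to the $\kappa$-slot, whose endpoint spaces are $L^1(V\varphi_0\,dr)$ and $L^{q_0}(V\,dr)$, the interpolated weighted space becomes $L^\alpha(V\varphi_0^\nu\,dr)$ with precisely $\alpha=q\tilde q/(q+\tilde q)$ and $\nu=2q/(q+\tilde q)=2\min(q,\tilde q)/(q+\tilde q)$, matching the lemma. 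The case $\tilde q<q$ is handled dually, and both cases agree at $q=\tilde q$ where $\nu=1$. The main technical obstacle is aligning the weighted $L^\alpha$-interpolation for the radial function $\kappa$ with the unweighted $L^p$-interpolation for $f$; this is handled by setting up Stein's analytic family of operators so that the weight $\varphi_0^z$ varies holomorphically with a parameter $z$ in the strip $\{0\le\operatorname{Re}z\le1\}$ and checking admissible growth on the boundary.
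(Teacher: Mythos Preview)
The paper does not give its own proof of this lemma: it is simply quoted from \cite[Lemma~3]{APV} and used as a black box. Your sketch is precisely the argument that appears there (which in turn follows Cowling's proof of the Kunze--Stein phenomenon): the Herz endpoint $L^2\to L^2$ via the spherical Plancherel theorem together with $|\varphi_\lambda|\le\varphi_0$, the Young endpoint $L^1\to L^{q_0}$ from Minkowski's inequality and left invariance of the Haar measure, and bilinear Stein--Weiss interpolation with weights for the $\kappa$-slot. Your computation of the interpolation parameters $\alpha=q\tilde q/(q+\tilde q)$ and $\nu=2\min(q,\tilde q)/(q+\tilde q)$ is correct, and the passage from $q\le\tilde q$ to $\tilde q<q$ by duality is legitimate here since for radial $\kappa$ one has $\kappa(x^{-1})=\kappa(x)$ and the adjoint of $f\mapsto f\ast\kappa$ (with respect to left Haar measure) is indeed $g\mapsto g\ast\bar\kappa$, again a radial convolution, with no modular factor appearing.
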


\section{Derivative estimates of $\varphi_{\lambda}$}

In this section we develop sharp asymptotics for the derivatives of the spherical functions which is crucial for the proof of Theorem \ref{thm-spectral} and Theorem \ref{thm-resolvent}. Precisely, we prove the following result.

\begin{prop} \label{derivative_estimates_thm}
For $\lambda \ge 1$ and for all $j \in \N_0$,
\be \label{derivative_estimates}
	\left|\frac{\partial^j}{\partial \lambda^j} \varphi_{\lambda}(r) \right| \le C_j\begin{cases} 
	 \lambda^{-j} {(1+\lambda r)}^{-\frac{n-1}{2}+j} & \text{ for } r \le 1; \\
	\lambda^{-\frac{n-1}{2}} r^j e^{-\frac{Q}{2}r}&\text{ for } r > 1 \:.
	\end{cases}
\ee
\end{prop}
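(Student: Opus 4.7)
The plan is to handle the two spatial regimes $r>1$ and $0<r\leq 1$ via entirely different representations of $\varphi_\lambda$, as foreshadowed in the introduction.

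For $r>1$, the main tool is the Harish-Chandra type series decomposition for spherical functions on harmonic $NA$ groups,
\bes
\varphi_\lambda(r)=\mathbf{c}(\lambda)\,\Phi_\lambda(r)+\mathbf{c}(-\lambda)\,\Phi_{-\lambda}(r),\qquad \Phi_\lambda(r)=e^{(i\lambda-Q/2)r}\sum_{\mu\geq 0}\Gamma_\mu(\lambda)\,e^{-\mu r},
\ees
where $\Gamma_0\equiv 1$ and the coefficients $\Gamma_\mu(\lambda)$ are determined by a recursion inherited from the radial part of $\Delta$. I would apply Leibniz's rule to the $j$-fold $\lambda$-derivative: derivatives falling on $\mathbf{c}(\pm\lambda)$ are controlled by Lemma \ref{est-clambda} (producing $(1+\lambda)^{-(n-1)/2-k}$ per $k$ derivatives), derivatives on $e^{i\lambda r}$ produce factors of $r$, and derivatives on $\Gamma_\mu(\lambda)$ admit sharp polynomial-in-$\mu$ bounds that leave the weighted sum $\sum_\mu|\partial_\lambda^k\Gamma_\mu(\lambda)|\,e^{-\mu r}$ absolutely convergent once $r>1$. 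Collecting these contributions yields the claimed bound $\lambda^{-(n-1)/2}\,r^j\,e^{-Qr/2}$.

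For $0<r\leq 1$ I would instead use the local expansion of Stanton-Tomas \cite{ST}, generalized to Damek-Ricci spaces by Astengo \cite{A}: for each $N\geq 1$,
\bes
\varphi_\lambda(r)=\sum_{\ell=0}^{N-1}g_\ell(r)\,\mathcal{J}_{\nu_\ell}(\lambda r)+E_N(\lambda,r),
\ees
where $\mathcal{J}_\nu(x)=2^\nu\Gamma(\nu+1)\,x^{-\nu}J_\nu(x)$ is the normalized Bessel function, $\nu_\ell=(n-2)/2+\ell$, the functions $g_\ell$ are smooth near $r=0$ with $g_\ell(r)=O(r^{2\ell})$, and the remainder $E_N$ can be made $O(\lambda^{-M})$ in $\lambda$ for any prescribed $M$ by taking $N$ large. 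The identity $\partial_\lambda^j\mathcal{J}_\nu(\lambda r)=r^j\mathcal{J}_\nu^{(j)}(\lambda r)$, together with the standard asymptotic $|\mathcal{J}_\nu^{(j)}(x)|\lesssim (1+x)^{-\nu-1/2}$, gives $|\partial_\lambda^j\mathcal{J}_{(n-2)/2}(\lambda r)|\lesssim r^j(1+\lambda r)^{-(n-1)/2}$. Since $(\lambda r)^j\leq (1+\lambda r)^j$, this is in turn bounded by $\lambda^{-j}(1+\lambda r)^{-(n-1)/2+j}$, which is precisely the target. Higher-order terms $\ell\geq 1$ come with an extra $r^{2\ell}$ from $g_\ell$ and stronger Bessel decay, and are absorbed in the leading term.

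The anticipated main obstacle lies in the small-radius analysis. The estimate changes character across the threshold $\lambda r\sim 1$: for $\lambda r\ll 1$ the spherical function is close to its value $1$ at the origin and the $\lambda$-derivatives behave via a Taylor series, with $r\leq\lambda^{-1}$ absorbing each factor of $r$ into $\lambda^{-1}$; for $\lambda r\gg 1$ the oscillation produces a factor of $r$ per derivative against the Bessel envelope $(\lambda r)^{-(n-1)/2}$. Managing the matching across this transition, and in particular justifying term-by-term $\lambda$-differentiation of Astengo's expansion to an order depending on $j$ with uniform control of the remainder $E_N$ and all its $\lambda$-derivatives, is where the real technical work sits. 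By contrast, the large-radius argument is largely a careful induction on the Harish-Chandra coefficient bounds $\Gamma_\mu(\lambda)$ combined with the known estimates of Lemma \ref{est-clambda}.
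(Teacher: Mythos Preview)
Your proposal is correct and follows essentially the same route as the paper: the Harish-Chandra series combined with the $\Gamma_\mu$ and $\mathbf{c}$-function derivative bounds for $r>1$, and the Stanton--Tomas/Astengo local Bessel expansion for $r\le 1$, further splitting at $\lambda r\sim 1$. The only notable technical difference is that the paper works with the full convergent Bessel series (no separate remainder $E_N$) and handles $\partial_\lambda^j G_L(\lambda r)$ via an explicit recursion expressing it as a finite sum $r^j\sum_s c_s(\lambda r)^{s}G_{L+k_s}(\lambda r)$ of higher-index functions, rather than invoking the asymptotic $|\mathcal{J}_\nu^{(j)}(x)|\lesssim (1+x)^{-\nu-1/2}$ directly; this recursion is precisely what sidesteps the remainder-control issue you flagged.
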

We prove the proposition in the following two subsections, which is then followed by an application in the form of  asymptotics for the derivatives  of  ${|{\bf c}(\lambda)|}^{-2}\varphi_\lambda$ (Corollary \ref{est-phi-c}).

\subsection{Derivative estimates of $\varphi_{\lambda}$ when $r>1$:}
In this case we will work with the Harish-Chandra series expansion of $\varphi_{\lambda}$ given by \cite[P.153]{A},  
\be \label{HC_expansion}
\varphi_{\lambda}(r) = {\bf c}(\lambda) \displaystyle\sum_{\mu=0}^\infty \Gamma_{\mu}(\lambda) e^{(i\lambda - \frac{Q}{2} - \mu)r} + {\bf c}(-\lambda) \displaystyle\sum_{\mu=0}^\infty \Gamma_{\mu}(-\lambda) e^{-(i\lambda + \frac{Q}{2} + \mu)r} \:,
\ee
where, $\Gamma_0 \equiv 1$ and for $\mu \in \N$,
\bes
(\mu^2 -2i \mu \lambda)~\Gamma_{\mu}(\lambda) = m \displaystyle\sum_{j=1}^{\mu} \Gamma_{\mu - j}(\lambda)\left(\frac{Q}{2}+\mu-j-i\lambda\right) + 2 k\displaystyle\sum_{j=1}^{[\mu/2]} \Gamma_{\mu - 2j}(\lambda)\left(\frac{Q}{2}+\mu-2j-i\lambda\right), 
\ees
where $[\mu/2]$ is the greatest integer less than or equals to $\mu/2$. We have the following derivative estimates on the coefficients $\Gamma_\mu$ \cite[Lemma 1]{APV}: there exists a positive constant $d$ and for every $j \in \N_0$, a positive constant $C_j$ (depends on $j$) such that
\be \label{derivative_estimates_Gamma_mu}
\left|\frac{\partial^j}{\partial \lambda^j} \Gamma_\mu(\lambda)\right| \le C_j~ \mu^d ~{\left(1+|\lambda|\right)}^{-(j + 1)} \:,
\ee
for all $\lambda \in \R$ and for all $\mu \in \N$. Let us define 
\be \label{defn_a2}
a_2(\lambda,r)= \displaystyle\sum_{\mu=0}^\infty \Gamma_\mu(\lambda)~ e^{-\mu r},
\ee
and hence using (\ref{HC_expansion}) we rewrite $\varphi_\lambda$ in terms of $a_2$ as,
\be \label{phi_lambda_with_a2}
\varphi_\lambda(r) = e^{-\frac{Q}{2}r}\left\{e^{i \lambda r} ~{\bf c}(\lambda)  ~ a_2(\lambda,r) + e^{-i \lambda r}~ {\bf c}(-\lambda)~ a_2(-\lambda,r)\right\}. 
\ee
The next lemma gives us derivative estimates of $a_2$\:.
\begin{lem} \label{derivative_estimates_a2/lambda}
For $j \in \N_0$ there exists $C_j>0$ such that
\bes
\left|\frac{\partial^j}{\partial \lambda^j} \left(\frac{a_2(\lambda,r)}{\lambda}\right)\right| \le \frac{C_j}{|\lambda|^{j + 1}}, \:\:\:\: |\lambda|> 1, ~ r>1.
\ees
\end{lem}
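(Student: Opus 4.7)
\medskip

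\noindent\textbf{Proof plan.} The statement is a routine consequence of the definition \eqref{defn_a2} of $a_2$, the coefficient estimate \eqref{derivative_estimates_Gamma_mu}, and the Leibniz rule. The key point is that $r>1$ makes the exponentials $e^{-\mu r}$ tame enough to absorb the polynomial factor $\mu^{d}$ in \eqref{derivative_estimates_Gamma_mu}.

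\medskip

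\noindent\textbf{Step 1: estimate $\partial_\lambda^{i} a_2$.} Since $\Gamma_0 \equiv 1$, this term contributes to $a_2$ but not to its derivatives. For $i=0$, using \eqref{derivative_estimates_Gamma_mu} and the fact that $\sum_{\mu \ge 1} \mu^{d} e^{-\mu r} \le \sum_{\mu \ge 1} \mu^{d} e^{-\mu} =: M_{d}<\infty$ whenever $r>1$, I would get
\bes
|a_2(\lambda,r)| \le 1 + \frac{C_0 M_d}{1+|\lambda|} \le C, \qquad |\lambda|>1,\ r>1.
\ees
For $i \ge 1$, differentiating termwise (justified by uniform convergence of $\sum_{\mu \ge 1} \mu^{d+i} e^{-\mu r}$ on compact subsets of $\{r>1\}$) yields
\bes
\bigl|\partial_\lambda^{i} a_2(\lambda,r)\bigr| \le C_i (1+|\lambda|)^{-(i+1)} \sum_{\mu=1}^{\infty} \mu^{d} e^{-\mu r} \le \frac{C_i}{|\lambda|^{i+1}},\qquad |\lambda|>1,\ r>1.
\ees

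\medskip

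\noindent\textbf{Step 2: Leibniz expansion.} Writing $a_2/\lambda = a_2 \cdot (1/\lambda)$ and using $\partial_\lambda^{j-i}(1/\lambda) = (-1)^{j-i}(j-i)!\,\lambda^{-(j-i+1)}$, the Leibniz rule gives
\bes
\frac{\partial^j}{\partial \lambda^j}\!\left(\frac{a_2(\lambda,r)}{\lambda}\right) = \sum_{i=0}^{j} \binom{j}{i} \bigl(\partial_\lambda^{i} a_2(\lambda,r)\bigr) \frac{(-1)^{j-i}(j-i)!}{\lambda^{j-i+1}}.
\ees
The $i=0$ summand is bounded by $C\,j!/|\lambda|^{j+1}$ by Step 1. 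For $1 \le i \le j$, Step 1 bounds the summand by $C_{i,j}/(|\lambda|^{i+1} |\lambda|^{j-i+1}) = C_{i,j}/|\lambda|^{j+2}$, which for $|\lambda|>1$ is dominated by $C_{i,j}/|\lambda|^{j+1}$. Summing over $i$ produces the desired bound $C_j/|\lambda|^{j+1}$.

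\medskip

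\noindent\textbf{Main obstacle.} There is no genuine obstacle beyond bookkeeping: the whole argument relies on the decisive fact that $r>1$ renders $\sum_{\mu \ge 1} \mu^{d} e^{-\mu r}$ uniformly summable, which lets us differentiate the series \eqref{defn_a2} term by term and plug in \eqref{derivative_estimates_Gamma_mu}. One minor point worth being careful about is that the improvement $(1+|\lambda|)^{-(i+1)}$ from \eqref{derivative_estimates_Gamma_mu} holds only for $i \ge 1$ (since $\Gamma_0$ is constant), which is why the $i=0$ term in Step 2 has to be handled separately and ends up being the dominant contribution $j!/|\lambda|^{j+1}$, matching the target.
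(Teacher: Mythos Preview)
Your proof is correct and follows essentially the same route as the paper: first bound $\partial_\lambda^{i} a_2$ using \eqref{derivative_estimates_Gamma_mu} together with the summability of $\sum_{\mu\ge 1}\mu^d e^{-\mu r}$ for $r>1$, then apply the Leibniz rule to $a_2(\lambda,r)\cdot \lambda^{-1}$. The only cosmetic difference is that the paper packages Step~1 into the single unified bound $|\partial_\lambda^{i} a_2|\le C_i(1+|\lambda|)^{-i}$ for all $i\ge 0$ before invoking Leibniz, whereas you keep the sharper $|\lambda|^{-(i+1)}$ for $i\ge 1$ and treat $i=0$ separately; the end result is the same.
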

\begin{proof}
Using (\ref{derivative_estimates_Gamma_mu}) and the fact that $\Gamma_0 \equiv 1$, we have for $j \in \N$ there exists $C_j>0$ such that
\beas	
\left|\frac{\partial^j}{\partial \lambda^j} a_2(\lambda,r)\right|  \leq \displaystyle\sum_{\mu=0}^\infty \left|\frac{\partial^j}{\partial \lambda^j}\Gamma_\mu(\lambda)\right| e^{-\mu r} 
		 \leq  C_j \displaystyle\sum_{\mu=1}^\infty \mu^d {(1+|\lambda|)}^{-(j + 1)} e^{-\mu r} \:.
\eeas
Hence, using the fact that $r>1$, we have for $j\in \mathbb N$ 
\bes
\left|\frac{\partial^j}{\partial \lambda^j} a_2(\lambda,r)\right| \leq C_j ~{\left(1+|\lambda|\right)}^{-(j+1)}, \:\: \lambda \in \R.
\ees
Also, for $j = 0$, there exists $C>0$ such that
\bes 
|a_2(\lambda,r)| \le C, \:\: \textit{ for all } \lambda \in \R,\:r>1\:.
\ees
Combining above two inequalities we have, for $j \in \mathbb N_0$, there exists $C_j>0$ such that
\be \label{derivative_estimates_a2}
\left|\frac{\partial^j}{\partial \lambda^j} a_2(\lambda,r)\right| \le C_j {\left(1+|\lambda|\right)}^{-j}, \:\:\: \lambda \in \R, r>1.
\ee
The result now follows by using (\ref{derivative_estimates_a2}) and Leibniz rule.
\end{proof}

Now we are in a position to prove (\ref{derivative_estimates}) for the case $r>1$. In view of (\ref{phi_lambda_with_a2}), it is enough to find the derivative estimates of the function $\lambda \ra \left(e^{i \lambda r}~ {\bf c}(\lambda) ~  a_2(\lambda,r)\right)$, for $|\lambda|>1$. We write
\bes
\frac{\partial^j}{\partial \lambda^j} \left(e^{i \lambda r} {\bf c}(\lambda)   a_2(\lambda,r)\right)
= \sum_{k_1+k_2+k_3=j} \binom{j}{k_1 \: k_2 \: k_3} \frac{\partial^{k_1}}{\partial \lambda^{k_1}}\left(e^{i \lambda r}\right)~\frac{\partial^{k_2}}{\partial \lambda^{k_2}}\left(\lambda {\bf c}(\lambda)\right)~\frac{\partial^{k_3}}{\partial \lambda^{k_3}}\left(\frac{a_2(\lambda,r)}{\lambda}\right).
\ees
Plugging the estimates in Lemma \ref{derivative_estimates_a2/lambda} and Lemma \ref{est-clambda} in the above identity we get that
\beas
&& \left|\frac{\partial^j}{\partial \lambda^j} \left(e^{i \lambda r} {\bf c}(\lambda)   a_2(\lambda,r) \right)  \right| \\
&&  \leq \displaystyle\sum_{k_1+k_2+k_3=j} C(k_1,k_2,k_3) \binom{j}{k_1 \: k_2 \: k_3}~ r^{k_1} ~\left(1+|\lambda|\right)^{1-\frac{n-1}{2}-k_2} ~{|\lambda|}^{-(k_3+1)} \\
&& \leq C_j ~r^j ~ {|\lambda|}^{-\frac{n-1}{2}}, \:\:  |\lambda|>1.
\eeas
In view of the relation (\ref{phi_lambda_with_a2}) this completes the proof of the estimate (\ref{derivative_estimates}) for $r>1$.

\subsection{Derivative estimates of $\varphi_{\lambda}$ when $0 < r \le 1$:}
We use here the local behaviour of spherical functions. We closely follow the ideas of Stanton and Tomas \cite{ST}. For $L \ge 0$, let
\bes
G_{L}(t) = 2^{L} ~\Gamma \left(\frac{1}{2}\right) \Gamma \left(L+\frac{1}{2}\right)  \frac{J_{L}(t)}{t^L}\:,
\ees
where $J_L$ is the usual Bessel function. Then we have the following asymptotic expansion.
\begin{lem} \cite[Theorem 3.1]{A} \label{phi_lamb_series_lemma}
There exist $R_0, R_1$ with $2 < R_0 < 2R_1$, such that for any $r$, $0 \le r \le R_0$, we have
\be \label{phi_lamb_series}
\varphi_\lambda(r) = c_0 {\left(\frac{r^{n-1}}{V(r)}\right)}^{1/2} \displaystyle\sum_{l=0}^\infty r^{2l}~ a_l(r) ~G_{\frac{n-2}{2}+l}(\lambda r),
\ee
where 
\bes
a_0 \equiv 1 \:,\: |a_l(r)| \le C{(4R_1)}^{-l} \:,
\ees
and the above series converges absolutely and uniformly. 
\end{lem}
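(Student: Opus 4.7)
The plan is to adapt the Stanton--Tomas method \cite{ST} for rank-one symmetric spaces to the Damek--Ricci setting, as done by Astengo \cite{A}. As a radial eigenfunction of $\Delta$, the spherical function satisfies
\begin{equation*}
\varphi_\lambda''(r) + \frac{V'(r)}{V(r)}\, \varphi_\lambda'(r) + \left(\lambda^2 + \frac{Q^2}{4}\right)\varphi_\lambda(r) = 0, \qquad \varphi_\lambda(0) = 1.
\end{equation*}
First I would make the substitution $\varphi_\lambda(r) = (r^{n-1}/V(r))^{1/2} \psi_\lambda(r)$, chosen so that the first-order coefficient in the equation for $\psi_\lambda$ collapses to $(n-1)/r$. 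A direct calculation then yields
\begin{equation*}
\psi_\lambda''(r) + \frac{n-1}{r}\, \psi_\lambda'(r) + \lambda^2\, \psi_\lambda(r) = W(r)\, \psi_\lambda(r),
\end{equation*}
where $W(r)$ is assembled from $Q^2/4$ and the logarithmic derivatives of $(r^{n-1}/V)^{1/2}$. Since $V(r)/r^{n-1} = (\sinh(r/2)/(r/2))^{m+k} \cosh^k(r/2)$ is real-analytic, strictly positive near $r=0$, and has its nearest complex singularity no closer than $|\mathrm{Im}\, r| = \pi$, the perturbation $W$ is real-analytic on an open disk of radius strictly greater than $2$, and this is what pins down the constant $R_0 > 2$.

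The second step is the ansatz (\ref{phi_lamb_series}). The crucial observation is that the unperturbed equation $u'' + (n-1)u'/r + \lambda^2 u = 0$ is solved by $G_{(n-2)/2}(\lambda r)$, a routine consequence of the Bessel equation for $J_L$. The recursion $\frac{d}{dt}(t^{-L} J_L(t)) = -t^{-L} J_{L+1}(t)$ lets one rewrite $G_{(n-2)/2+l}'(\lambda r)$ in terms of $G_{(n-2)/2+l+1}(\lambda r)$, so substituting the ansatz into the perturbed ODE and matching coefficients of $G_{(n-2)/2+l}(\lambda r)$ for each $l$ produces a sequence of first-order linear ODEs for $a_l(r)$, driven by $W$ and $a_0,\dots,a_{l-1}$. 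These can be integrated by quadrature, with $a_0 \equiv 1$ and $c_0$ fixed by $\varphi_\lambda(0)=1$.

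The main obstacle is the geometric decay estimate $|a_l(r)| \le C (4R_1)^{-l}$. Because $W$ is real-analytic with radius of convergence strictly larger than $R_0$, its Taylor coefficients decay geometrically; one majorizes $W$ by a function of the form $M/(1 - r/R_1)$ with $R_1$ slightly below the radius of convergence of $W$, and then proves by induction that each $a_l$ is dominated by the claimed geometric sequence, with the factor $4^{-l}$ extracted from the coefficient $2l(2l + n - 2)$ that appears in the denominator at each recursion step. Absolute and uniform convergence on $[0, R_0]$ follows by combining the bound on $a_l$ with the uniform estimate $|G_L(t)| \le C$ and the geometric control $r^{2l}(4R_1)^{-l} \le (R_0^2/(4R_1))^l$, which is summable precisely because $R_0 < 2R_1$. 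The bookkeeping is heavier than in the rank-one symmetric-space case because $V(r)/r^{n-1}$ mixes both $\sinh(r/2)$ and $\cosh(r/2)$, forcing one to control two overlapping Taylor expansions simultaneously when bounding the recursion; this is the technical core of the argument.
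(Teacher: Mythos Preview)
The paper does not prove this lemma at all: it is quoted verbatim as \cite[Theorem 3.1]{A} and used as a black box. Your proposal is therefore not a reconstruction of anything in the paper, but rather a sketch of Astengo's original argument (itself modelled on Stanton--Tomas \cite{ST}); as such it is broadly accurate in outlining the substitution $\varphi_\lambda = (r^{n-1}/V)^{1/2}\psi_\lambda$, the resulting perturbed Bessel equation, the recursive determination of the $a_l$, and the source of the geometric decay via the analyticity radius of $W$.
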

We will require the following estimates of the functions $G_L$.
\begin{lem} \cite[P. 258]{ST} \label{G_L_estimates}
	For all $t \in \R$ and $\mu \in \N$ with $0 \le \mu \le L$, we have
	\begin{itemize}
		\item[(A)] $|G_L(t)| \le C \frac{\Gamma\left(L+\frac{1}{2}\right) \Gamma\left(\frac{1}{2}\right) 2^{L-1}}{t^{L+\frac{1}{2}}}$ \:.
		\item[(B)] $|G_L(t)| \le \frac{{\left(L-\frac{1}{2}\right)}^\mu 2^\mu \Gamma\left(\frac{1}{2}\right)}{t^\mu} \frac{\Gamma\left(L+\frac{1}{2}-\mu\right)}{\Gamma\left(L+1-\mu\right)}$\:.
	\end{itemize} 
\end{lem}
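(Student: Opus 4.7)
Both bounds will be derived from the Poisson integral representation
\[
G_L(t) \;=\; \int_{-1}^{1}(1-s^2)^{L-1/2}\,e^{its}\,ds,
\]
obtained by substituting the classical Poisson formula $J_L(t)=\frac{(t/2)^{L}}{\Gamma(L+1/2)\Gamma(1/2)}\int_{-1}^{1}(1-s^2)^{L-1/2}e^{its}\,ds$ (valid for $L>-1/2$) into the definition of $G_L$. All subsequent work amounts to extracting the two decay estimates from this single identity.

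For (B) the plan is iterated integration by parts. Writing $e^{its}=(it)^{-\mu}\frac{d^\mu}{ds^\mu}e^{its}$ and integrating by parts $\mu$ times, all boundary contributions at $s=\pm 1$ vanish under the hypothesis $\mu\le L$, since the amplitude $(1-s^2)^{L-1/2}$ has a zero of order $L-1/2$ at the endpoints and each differentiation lowers this order by exactly one. One is left with
\[
G_L(t) \;=\; \frac{(-1)^{\mu}}{(it)^{\mu}}\int_{-1}^{1}\frac{d^\mu}{ds^\mu}\bigl[(1-s^2)^{L-1/2}\bigr]\,e^{its}\,ds,
\]
and hence $|G_L(t)|\le t^{-\mu}\int_{-1}^{1}\bigl|\tfrac{d^\mu}{ds^\mu}(1-s^2)^{L-1/2}\bigr|\,ds$. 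Factoring $(1-s^2)^{L-1/2}=(1-s)^{L-1/2}(1+s)^{L-1/2}$ and expanding by Leibniz exposes a sum indexed by $j=0,\dots,\mu$ of terms proportional to $(1-s)^{L-1/2-j}(1+s)^{L-1/2-(\mu-j)}$ with coefficients $\binom{\mu}{j}$ times a product of two falling-factorial (i.e.\ Gamma-ratio) factors. Evaluating each piece with the beta identity $\int_{-1}^{1}(1-s)^a(1+s)^b\,ds=2^{a+b+1}B(a+1,b+1)$ cancels the falling-factorial denominators against the beta numerators; the binomial sum $\sum_j\binom{\mu}{j}$ then supplies the factor $2^{\mu}$, and the residual Gamma data packages into exactly $(L-1/2)^{\mu}\,2^{\mu}\,\Gamma(1/2)\,\Gamma(L+1/2-\mu)/\Gamma(L+1-\mu)$, as claimed.

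For (A) the target is, after unwinding the definition of $G_L$, equivalent to the classical Bessel decay $|J_L(t)|\le Ct^{-1/2}$. The quickest route is to quote this standard estimate. For a self-contained derivation, one applies a van der Corput / Erd\'elyi-type endpoint asymptotic to the same Poisson integral: the phase is linear and hence has no interior stationary point, so the oscillatory mass localizes near $s=\pm 1$, where the amplitude $(1\mp s)^{L-1/2}(1\pm s)^{L-1/2}$ has an algebraic zero of order $L-1/2$. Rescaling $u=t(1\mp s)$ converts the local contribution into $t^{-L-1/2}\int_{0}^{\infty}u^{L-1/2}e^{-iu}\,du$ up to smooth boundary factors, and the identity $\int_{0}^{\infty}u^{L-1/2}e^{-iu}\,du=\Gamma(L+1/2)e^{-i\pi(L+1/2)/2}$ reproduces precisely the factor $\Gamma(L+1/2)/t^{L+1/2}$ of (A); the remaining $2^{L-1}\Gamma(1/2)$ is the smooth amplitude factor evaluated at the endpoint.

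The main obstacle will be the constant bookkeeping in (B): confirming that the Leibniz sum, the binomial coefficients, and the beta-integral evaluations really collapse to the stated clean product $(L-1/2)^{\mu}2^{\mu}\Gamma(1/2)\Gamma(L+1/2-\mu)/\Gamma(L+1-\mu)$, rather than to a looser or uglier expression. A secondary concern is the ``borderline'' case $\mu=L$ (with $L$ integral), where the vanishing of boundary terms in the iterated IBP is only just guaranteed and needs to be inspected directly; the extension of the truncated endpoint integral in (A) to the half-line also requires routine but careful error control.
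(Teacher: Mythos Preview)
The paper does not prove this lemma; it is quoted directly from Stanton--Tomas \cite[p.\,258]{ST}, so there is no in-paper argument to compare against. Your route via the Poisson integral representation
\[
G_L(t)=\int_{-1}^{1}(1-s^2)^{L-1/2}e^{its}\,ds
\]
is precisely the standard one (and essentially what Stanton--Tomas do). Part~(A) is, as you say, equivalent to the classical decay $|J_L(t)|\le Ct^{-1/2}$ once the definition is unwound, and your endpoint asymptotic sketch is the correct mechanism; for (B) the $\mu$-fold integration by parts is valid and the boundary terms do vanish under the hypothesis $\mu\le L$, since the $(k{-}1)$st derivative of $(1-s^2)^{L-1/2}$ carries a factor $(1\mp s)^{L-1/2-(k-1)}$ with exponent $>0$ for $k\le\mu\le L$.

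One correction on the bookkeeping in (B): after the Leibniz expansion and the beta-integral evaluation, the $\Gamma(L+\tfrac12-j)$ factors indeed cancel as you describe, but what remains is
\[
|G_L(t)|\;\le\;t^{-\mu}\,\frac{2^{2L}\,\Gamma(L+\tfrac12)^2}{\Gamma(2L-\mu+1)},
\]
which is \emph{not} literally the expression stated in (B); for $\mu\ge 2$ the two differ (yours is in fact sharper, e.g.\ by a factor $\tfrac12$ when $\mu=2$, $L=2$). So the assertion that the Gamma data ``packages into exactly'' the displayed form is an overstatement. To reach (B) as written you still need a short additional step, for instance the duplication formula together with the crude estimate $\prod_{k=0}^{\mu-1}(L-\tfrac12-k)\le (L-\tfrac12)^{\mu}$. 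This is cosmetic for the paper's applications, where only the qualitative shape of the bound (decay $t^{-\mu}$ with at-most-polynomial growth in $L$) is used.
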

The following recursion formula for the derivatives of $G_L$ will be used crucially in this subsection. The proof follows from an inductive argument and we omit the details. 
\begin{lem} \label{derivative_recursion}
For $L \in \R$ and $j \in \N_0$, we have
\begin{itemize}
\item[(i)] $\frac{\partial^{2 j +1}}{\partial \lambda^{2 j +1}} G_L(\lambda r) = r^{2 j +1} \displaystyle\sum_{s=0}^j d_s {(\lambda r)}^{2s+1} G_{L+j+s+1}(\lambda r)$ \:, 
\item[(ii)] $\frac{\partial^{2 j}}{\partial \lambda^{2 j}} G_L(\lambda r) = r^{2 j} \displaystyle\sum_{s=0}^j c_s {(\lambda r)}^{2s} G_{L+j+s}(\lambda r)$ \:,  
\end{itemize}
where $c_s,d_s$ are some real constants depend on $s$.
\end{lem}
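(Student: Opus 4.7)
The plan is to derive both identities in parallel by induction on $j$, powered by a single first-order identity relating $\partial_\lambda G_L(\lambda r)$ to $G_{L+1}(\lambda r)$. I would start from the classical Bessel recursion $\frac{d}{dt}\bigl(t^{-L} J_L(t)\bigr) = -t^{-L} J_{L+1}(t)$; combining this with the defining formula $G_L(t) = 2^L \Gamma(1/2) \Gamma(L+1/2) J_L(t) t^{-L}$ and the functional equation $\Gamma(L+3/2) = (L+1/2)\Gamma(L+1/2)$ yields
\be \label{G-recursion}
\frac{d}{dt} G_L(t) = -\frac{t}{2L+1} G_{L+1}(t),
\ee
and hence, by the chain rule,
\bes
\frac{\partial}{\partial \lambda} G_L(\lambda r) = -\frac{r}{2L+1}\,(\lambda r)\,G_{L+1}(\lambda r),
\ees
which is exactly (i) for $j=0$ with $d_0 = -1/(2L+1)$. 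The base case of (ii) is the trivial identity $G_L(\lambda r) = G_L(\lambda r)$ with $c_0 = 1$.

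For the inductive step, I will use (ii) at level $j$ to deduce (i) at level $j$, and then use (i) at level $j$ to deduce (ii) at level $j+1$. Assuming (ii) at level $j$, I differentiate once in $\lambda$: a typical summand $c_s (\lambda r)^{2s} G_{L+j+s}(\lambda r)$ contributes two pieces, one from the derivative hitting the polynomial factor, namely $2 s c_s r (\lambda r)^{2s-1} G_{L+j+s}(\lambda r)$, and one from the derivative hitting $G_{L+j+s}$, which by \eqref{G-recursion} equals $-\frac{c_s}{2(L+j+s)+1} r (\lambda r)^{2s+1} G_{L+j+s+1}(\lambda r)$. After multiplying by the prefactor $r^{2j}$, reindexing the first piece via $s \mapsto s+1$ (the $s=0$ contribution vanishes because of the factor $2s$), and combining with the second, all surviving terms take the form $r^{2j+1} (\lambda r)^{2s+1} G_{L+j+s+1}(\lambda r)$ for $s = 0, \ldots, j$, matching the ansatz in (i). The passage from (i) at level $j$ to (ii) at level $j+1$ is entirely symmetric, again using \eqref{G-recursion} to bump the $G$-index.

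The proof is not conceptually deep; the main obstacle is purely combinatorial bookkeeping, keeping the powers of $r$, $\lambda r$, and the $G$-index shift $L \to L + j + s$ (or $L \to L + j + s + 1$) consistent across each differentiation. The invariants that make the induction work are: each application of $\partial_\lambda$ either (a) lowers the power of $\lambda r$ by one and pulls out a factor of $r$, or (b) raises the power of $\lambda r$ by one, raises the $G$-index by one, and pulls out a factor of $r$ via \eqref{G-recursion}. Either operation increments the total power of $r$ by exactly one, explaining the $r^{2j}$ and $r^{2j+1}$ prefactors automatically, and the parity of the power of $\lambda r$ flips after each differentiation, explaining why (i) sums over odd powers $(\lambda r)^{2s+1}$ and (ii) over even powers $(\lambda r)^{2s}$. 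Once these invariants are pinned down, the inductive verification is a short bookkeeping exercise.
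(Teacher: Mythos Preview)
Your proposal is correct and follows the same inductive approach the paper indicates; in fact the paper simply writes ``The proof follows from an inductive argument and we omit the details,'' so you have supplied precisely the details the authors left out, driven by the basic Bessel recursion $\frac{d}{dt}\bigl(t^{-L}J_L(t)\bigr)=-t^{-L}J_{L+1}(t)$. One minor remark: your computation makes explicit that the constants $c_s,d_s$ actually depend on $L$ and $j$ as well as $s$ (e.g.\ $d_0=-1/(2L+1)$ in the base case), which is harmless for the application but is slightly obscured by the paper's phrasing.
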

Now we proceed to prove the proposition. We break the proof into two cases: $\lambda r <1$ and $\lambda r \ge 1$.
 
\subsubsection{Case $\lambda r <1:$}  We only prove it for odd derivatives, the arguments being exactly similar for the case of even derivatives. Using Lemmas \ref{phi_lamb_series_lemma} and \ref{derivative_recursion}, we get that
\begin{eqnarray} \label{lambda.r<1eq1}
&&\left|\frac{\partial^{2j +1}}{\partial \lambda^{2 j +1}} \varphi_\lambda(r)\right|  \nonumber\\
&\le & c_0 {\left(\frac{r^{n-1}}{V(r)}\right)}^{1/2} \displaystyle\sum_{s=0}^j~d_s~ {(\lambda r)}^{2s+1} r^{2 j +1} \left[\displaystyle\sum_{l=0}^\infty r^{2l} \left|a_l(r)\right| \left|G_{\frac{n-2}{2}+l+j+s+1}(\lambda r)\right| \right].
\end{eqnarray} 
Note that $|\lambda r| < 1$ and $|\lambda | > 1$ imply that 
\begin{equation*}
r < \frac{1}{|\lambda|} < 1 \:.
\end{equation*}
Then combining the above with the fact that 
\bes
\left|G_{\mu}(t)\right| \le C \:, \text{ for all } t<1 \text{ and all } \mu\:, 
\ees
and $V(r) \asymp r^{n-1}$ for $r$ small, we get from (\ref{lambda.r<1eq1}) that
\bes
\left|\frac{\partial^{2j +1}}{\partial \lambda^{2j +1}} \varphi_\lambda(r)\right| \le  c_0 \displaystyle\sum_{s=0}^j d_s~{\left(\frac{1}{\lambda}\right)}^{2j +1} \left[\displaystyle\sum_{l=0}^\infty {(4R_1)}^{-l}\right] \le C_j {|\lambda|}^{-2j -1} \:.
\ees
This completes the prove of the estimate (\ref{derivative_estimates}) in this case.
\subsubsection{Case $\lambda r\geq 1$:} In this case, we note that, for $j \in \N_0$
\bes
\lambda^{-j} \left(1+\lambda r\right)^{-\frac{n-1}{2}+j} \asymp \lambda^{-j} \lambda^{-\frac{n-1}{2}+j} r^{-\frac{n-1}{2}+j} = \lambda^{-\frac{n-1}{2}}  r^{-\frac{n-1}{2}+j}\:.
\ees
Therefore, it is enough to show the following derivative estimate,
\begin{equation} \label{modified_estimate}
\left|\frac{\partial^j}{\partial \lambda^j} \varphi_{\lambda}(r) \right|  \leq C \lambda^{-\frac{n-1}{2}}  r^{-\frac{n-1}{2}+j} \:,\:\text{for all } j \in \N_0\:.
\end{equation}
We first consider the case when $j$ is even, that is, $j=2m$ for some $m \in \N_0$. Let $D=(n-2)/2$. Using Lemma \ref{derivative_recursion} it follows from (\ref{phi_lamb_series}) that
\bea \label{even_commuting_series}
\left|\frac{\partial^{2m}}{\partial \lambda^{2m}} \varphi_{\lambda}(r) \right| &\le & c_0 {\left(\frac{r^{n-1}}{V(r)}\right)}^{1/2} \displaystyle\sum_{l=0}^\infty r^{2l} ~\left|a_l(r)\right|~ \left[\displaystyle\sum_{s=0}^m c_s~r^{2m} ~{(\lambda r)}^{2s} ~ \left|G_{D+l+m+s}(\lambda r)\right|\right] \nonumber\\
&=& c_0 {\left(\frac{r^{n-1}}{V(r)}\right)}^{1/2} \displaystyle\sum_{s=0}^m c_s~r^{2m} ~ {(\lambda r)}^{2s} \left[\displaystyle\sum_{l=0}^\infty r^{2l} ~ \left|a_l(r)\right|~\left|G_{D+l+m+s}(\lambda r)\right|\right] \nonumber\\
&=& c_0 {\left(\frac{r^{n-1}}{V(r)}\right)}^{1/2} \displaystyle\sum_{s=0}^m c_s~I_s(\lambda, r),
\eea
where, for $0\leq s\leq m$,
\bes
I_s(\lambda, r)= r^{2m} ~ {(\lambda r)}^{2s}\displaystyle\sum_{l=0}^\infty r^{2l} ~ \left|a_l(r)\right|~\left|G_{D+l+m+s}(\lambda r)\right|.
\ees
It follows from Lemma \ref{phi_lamb_series_lemma} that 
\bea \label{even_single_term}
I_s(\lambda, r) &\leq&  r^{2m} ~{(\lambda r)}^{2s} \left[\left|G_{D+m+s}(\lambda r)\right| + \displaystyle\sum_{l=1}^\infty r^{2l}~ {(4R_1)}^{-l} ~ |G_{D+l+m+s}(\lambda r)|\right] \nonumber\\
&=& I_s^1(\lambda, r)+I_s^2(\lambda, r),
\eea
where
\bes
I_s^1(\lambda, r)= r^{2m} ~{(\lambda r)}^{2s} \left|G_{D+m+s}(\lambda r)\right|. 
\ees
Applying Lemma \ref{G_L_estimates}-(A) and noting $\lambda r>1$ and $0\leq s\leq m$, we get that
\bea \label{even_single_term_estimated_0}
I_s^1(\lambda, r) &\leq& \frac{r^{2m} ~(\lambda r)^{2s}}{{(\lambda r)}^{D+m+s+\frac{1}{2}}} \Gamma \left(D+m+s+\frac{1}{2}\right) \Gamma \left(\frac{1}{2}\right) 2^{D+m+s-1} \nonumber\\
&\leq& C_{s, m} ~ r^{2m} ~(\lambda r)^{-D-m+s-\frac{1}{2}} \nonumber \\
&\leq & C_{s, m} ~ r^{2m} ~(\lambda r)^{-D-\frac{1}{2}} \nonumber\\
&=& C_{s, m} ~r^{2m} {(\lambda r)}^{-\frac{n-1}{2}}.
\eea
Let us choose $\mu=[D]+2s+1$. Applying Lemma \ref{G_L_estimates}-(B) we obtain that 
\bea \label{even_single_term_estimated}
&& I_s^2(\lambda, r) = r^{2m} ~(\lambda r)^{2s}~\displaystyle\sum_{l=1}^\infty r^{2l}~ {(4R_1)}^{-l} ~ |G_{D+l+m+s}(\lambda r)| \nonumber\\
&& \leq r^{2m}~ {(\lambda r)}^{2s} ~ \frac{2^{[D]+2s+1} \Gamma\left(\frac{1}{2}\right)}{(\lambda r)^{[D]+2s+1}} \nonumber\\
&& \times \left[\displaystyle \sum_{l=1}^\infty {\left(D+l+m+s-\frac{1}{2}\right)}^{[D]+2s+1} {(4R_1)}^{-l} \frac{\Gamma\left(D+l+m-s-[D]-\frac{1}{2}\right)}{\Gamma\left(D+l+m-s-[D]\right)} \right] \nonumber\\
&& \leq C_{s, m} ~r^{2m} ~ (\lambda r)^{-[D]-1} \nonumber\\
&& =C_{s, m} \begin{cases} 
		r^{2m} {(\lambda r)}^{-n/2} & \text{ if $n$ is even }; \\
		r^{2m} {(\lambda r)}^{-(n-1)/2} & \text{ if $n$ is odd }.
	\end{cases}
\eea
We now use the fact $\lambda r>1$ and $V(r) \asymp r^{n-1}$, for $r$ small. Combining the estimates (\ref{even_single_term_estimated_0}) and (\ref{even_single_term_estimated}), it follows from (\ref{even_commuting_series}) that
\bes
\left|\frac{\partial^{2m}}{\partial \lambda^{2m}} \varphi_{\lambda}(r) \right| \le C_m \lambda^{-\frac{n-1}{2}} r^{-\frac{n-1}{2}+2m}.
\ees
This proves the estimate (\ref{modified_estimate}) for $j=2m$.

We now consider the case when $j$ is odd: $j=2m+1$, for some $m\in \N$. We proceed as above. Let $D=(n-2)/2$. Using Lemma \ref{derivative_recursion} it follows from (\ref{phi_lamb_series}) that
\bea \label{odd_commuting_series}
\left|\frac{\partial^{2m+1}}{\partial \lambda^{2m+1}} \varphi_{\lambda}(r) \right| &=& c_0 {\left(\frac{r^{n-1}}{V(r)}\right)}^{1/2} \displaystyle\sum_{s=0}^m d_s~r^{2m+1} ~ {(\lambda r)}^{2s+1} \left[\displaystyle\sum_{l=0}^\infty r^{2l} ~ \left|a_l(r)\right|~\left|G_{D+l+m+s+1}(\lambda r)\right|\right] \nonumber\\
&=& c_0 {\left(\frac{r^{n-1}}{V(r)}\right)}^{1/2} \displaystyle\sum_{s=0}^m d_s~J_s(\lambda, r).
\eea
Here, for $0\leq s\leq m$
\bea
J_s(\lambda, r) &=& r^{2m+1} ~ {(\lambda r)}^{2s+1}\displaystyle\sum_{l=0}^\infty r^{2l} ~ \left|a_l(r)\right|~\left|G_{D+l+m+s+1}(\lambda r)\right| \nonumber\\
&\leq& r^{2m+1} ~{(\lambda r)}^{2s+1} \left[\left|G_{D+m+s+1}(\lambda r)\right| + \displaystyle\sum_{l=1}^\infty r^{2l}~ {(4R_1)}^{-l} ~ |G_{D+l+m+s+1}(\lambda r)|\right] \nonumber\\
&=& J_s^1(\lambda, r)+ J_s^2(\lambda, r),
\eea
where
\bes
J_s^1(\lambda, r)= r^{2m+1} ~{(\lambda r)}^{2s+1} \left|G_{D+m+s+1}(\lambda r)\right|. 
\ees
Applying Lemma \ref{G_L_estimates}-(A) and noting $\lambda r>1$ and $0\leq s\leq m$, we get that
\bea \label{odd_single_term_estimated_0}
J_s^1(\lambda, r) &\leq& \frac{r^{2m+1} ~(\lambda r)^{2s+1}}{{(\lambda r)}^{D+m+s+\frac{3}{2}}} \Gamma \left(D+m+s+\frac{3}{2}\right) \Gamma \left(\frac{1}{2}\right) 2^{D+m+s} \nonumber\\
&\leq& C_{s, m} ~ r^{2m+1} ~(\lambda r)^{-D-m+s-\frac{1}{2}} \nonumber \\
&\leq & C_{s, m} ~ r^{2m+1} ~(\lambda r)^{-D-\frac{1}{2}} \nonumber\\
&=& C_{s, m} ~r^{2m} {(\lambda r)}^{-\frac{n-1}{2}}.
\eea
Let us choose $\mu=[D]+2s+2$. By Lemma \ref{G_L_estimates}-(B) we obtain that 
\bea \label{odd_single_term_estimated}
J_s^2(\lambda, r) &\le &  r^{2m+1} ~(\lambda r)^{2s+1}~\displaystyle\sum_{l=1}^\infty r^{2l}~ {(4R_1)}^{-l} \nonumber \\
& \times & \left[\frac{{\left(D+l+m+s+\frac{1}{2}\right)}^{[D]+2s+2} 2^{[D]+2s+2} \Gamma\left(\frac{1}{2}\right)}{(\lambda r)^{[D]+2s+2}} 
\frac{\Gamma\left(D+l+m-s-[D]-\frac{1}{2}\right)}{\Gamma(D+l+m-s-[D])}\right] \nonumber\\
&=& C_{s, m}~ r^{2m+1}~ (\lambda r)^{-[D]-1} \nonumber\\
&=& C_{s, m} \begin{cases} 
		r^{2m+1} ~ (\lambda r)^{-n/2} & \text{ if $n$ is even }; \\
		r^{2m+1} ~(\lambda r)^{-(n-1)/2} &\text{ if $n$ is odd }.
	\end{cases}
\eea
Using the fact that $\lambda r>1$ and combining the estimates (\ref{odd_single_term_estimated_0}) and (\ref{odd_single_term_estimated}), it follows from (\ref{odd_commuting_series}) that
\bes
\left|\frac{\partial^{2m+1}}{\partial \lambda^{2m+1}} \varphi_{\lambda}(r) \right| \le C_m~ \lambda^{-\frac{n-1}{2}} ~r^{-\frac{n-1}{2}+2m+1} \:, \text{ for all } m \in \N_0. 
\ees
This proves the estimate (\ref{modified_estimate}) for $j=2m+1$ and hence completes the proof of  Proposition \ref{derivative_estimates_thm}.

\subsection{Derivative estimates of ${|{\bf c}(\lambda)|}^{-2}\varphi_\lambda$}
An immediate consequence of Theorem \ref{derivative_estimates_thm} we have the following result.
\begin{cor} \label{est-phi-c}
	For $\lambda \ge 1$ and for all $j \in \N_0$,
	\begin{equation} \label{derivative_estimatesclamb-2philamb}
	\left|\frac{\partial^j}{\partial \lambda^j} \left({|{\bf c}(\lambda)|}^{-2}\varphi_\lambda(r)\right) \right| \le  C_j \begin{cases} 
	 \lambda^{n-1-j} ~\left(1+\lambda r \right)^{-\frac{n-1}{2}+j} & \text{ for } r \le 1; \\
	\lambda^{\frac{n-1}{2}} ~r^j~ e^{-\frac{Q}{2}r}&\text{ for } r > 1 \:.
	\end{cases}
	\end{equation}
\end{cor}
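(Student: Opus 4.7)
The plan is to combine Leibniz's rule with the two ingredients already at our disposal: Lemma \ref{est-clambda} for the Plancherel density ${|{\bf c}(\lambda)|}^{-2}$, and Proposition \ref{derivative_estimates_thm} for the spherical function $\varphi_\lambda$. Writing
\bes
\frac{\partial^j}{\partial \lambda^j}\left({|{\bf c}(\lambda)|}^{-2}\varphi_\lambda(r)\right) = \sum_{k=0}^j \binom{j}{k}\frac{\partial^k}{\partial \lambda^k}{|{\bf c}(\lambda)|}^{-2} \cdot \frac{\partial^{j-k}}{\partial \lambda^{j-k}}\varphi_\lambda(r),
\ees
I would bound the first factor in each summand by $C_k (1+\lambda)^{n-1-k} \asymp \lambda^{n-1-k}$, which is valid since $\lambda \ge 1$.

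For the regime $r \le 1$, I would apply the first case of Proposition \ref{derivative_estimates_thm} to bound the second factor by $C_{j-k}\lambda^{-(j-k)}(1+\lambda r)^{-(n-1)/2+(j-k)}$. The $k$-th summand therefore has size $\lambda^{n-1-j}(1+\lambda r)^{-(n-1)/2+j-k}$, where the powers of $\lambda$ have combined to the $k$-independent factor $\lambda^{n-1-j}$. Since $1+\lambda r \ge 1$ and the exponent $-(n-1)/2+j-k$ is maximized at $k=0$, monotonicity of $x \mapsto x^a$ on $[1,\infty)$ (as a function of $a$) gives $(1+\lambda r)^{-(n-1)/2+j-k} \le (1+\lambda r)^{-(n-1)/2+j}$. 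Summing over the finitely many $k$'s produces the stated bound.

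For the regime $r > 1$, the second case of Proposition \ref{derivative_estimates_thm} bounds the second factor by $C_{j-k}\lambda^{-(n-1)/2}r^{j-k}e^{-Qr/2}$, so that each summand has order $\lambda^{(n-1)/2-k}\,r^{j-k}\,e^{-Qr/2}$. Using $r > 1$ and $\lambda \ge 1$ gives $r^{j-k} \le r^j$ and $\lambda^{(n-1)/2-k} \le \lambda^{(n-1)/2}$, so each summand is dominated by $\lambda^{(n-1)/2}\,r^j\,e^{-Qr/2}$, and summing gives the claim. There is no substantive obstacle here: once the derivative estimates of Proposition \ref{derivative_estimates_thm} and Lemma \ref{est-clambda} are in hand, the corollary follows routinely from Leibniz's rule together with the monotonicity inequality $(1+\lambda r)^a \le (1+\lambda r)^b$ for $a \le b$ whenever $1+\lambda r \ge 1$.
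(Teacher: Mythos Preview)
Your proof is correct and follows essentially the same approach as the paper: Leibniz's rule combined with Lemma~\ref{est-clambda} and Proposition~\ref{derivative_estimates_thm}. The only cosmetic difference is that the paper splits the regime $r\le 1$ further into $\lambda r\le 1$ and $\lambda r>1$, whereas your monotonicity observation $(1+\lambda r)^{-(n-1)/2+j-k}\le (1+\lambda r)^{-(n-1)/2+j}$ handles both subcases at once.
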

\begin{proof}
This is a simple application of Proposition \ref{derivative_estimates_thm}, Lemma \ref{est-clambda} and the following Leibniz formula
\be
\frac{\partial^j}{\partial \lambda^j} \left({|{\bf c}(\lambda)|}^{-2}~\varphi_\lambda(r)\right) = \displaystyle\sum_{m=0}^j {j \choose m}  \frac{\partial^m}{\partial \lambda^m} \varphi_{\lambda}(r)~\frac{\partial^{j - m}}{\partial \lambda^{j - m}} \left({|{\bf c}(\lambda)|}^{-2}\right)
\ee
As in the proof of Proposition \ref{derivative_estimates_thm}, we break the proof into the following three cases:
\begin{itemize}
\item[(i)] $r>1$ \:,
\item[(ii)] $r \le 1$ and $\lambda r \le 1$ \:,
\item[(iii)] $r \le 1$ and $\lambda r > 1$ \:.
\end{itemize}
For case $(i)$, we have
\beas
\left|\frac{\partial^j}{\partial \lambda^j} \left({|{\bf c}(\lambda)|}^{-2}\varphi_\lambda(r)\right) \right|
& \le & \displaystyle\sum_{m=0}^j {j \choose m} \left(C_m ~\lambda^{-\frac{n-1}{2}} ~r^m ~e^{-\frac{Q}{2}r}\right) ~\left(C_{j-m} \left(1+\lambda \right)^{n-1-(j - m)} \right)\\
& \le & C_j ~e^{-\frac{Q}{2}r}~ \displaystyle\sum_{m=0}^j  \lambda^{\frac{n-1}{2} -(j - m)} ~r^m  \\
& \le & C_j \lambda^{\frac{n-1}{2}} ~r^j~ e^{-\frac{Q}{2}r}.
\eeas
For case $(ii)$, we get
\beas
\left|\frac{\partial^j}{\partial \lambda^j} \left({|{\bf c}(\lambda)|}^{-2}\varphi_\lambda(r)\right) \right|
& \le & \displaystyle\sum_{m=0}^j {j \choose m} \left(C_m \lambda^{-m} \right) ~\left(C_{j-m} {(1+\lambda)}^{n-1-(j - m)}\right) \\
& \le & C_j ~\lambda^{n-1-j}.
\eeas
Finally in the case $(iii)$ we get that 
\beas
\left|\frac{\partial^j}{\partial \lambda^j} \left({|{\bf c}(\lambda)|}^{-2}\varphi_\lambda(r)\right) \right|
& \le & \displaystyle\sum_{m=0}^j {j \choose m} \left(C_m \lambda^{-m} \left(1+\lambda r \right)^{-\frac{n-1}{2}+m} \right)~ \left(C_{j-m} {(1+\lambda)}^{n-1-(j -m)} \right)\\
& \le & C_j \displaystyle\sum_{m=0}^j  \lambda^{-m} {(1+\lambda r)}^{-\frac{n-1}{2}+m}  \lambda^{n-1-(j -m)} \\
& \le & C_j ~\lambda^{n-1-j} ~ \left(1+\lambda r\right)^{-\frac{n-1}{2}+j}.
\eeas
This completes the proof.
\end{proof}

\section{Spectral projection estimates}
In this section we prove Theorem \ref{thm-spectral}. The proof for the low energy case: $0< \lambda\leq 1$ is easy. It follows from the kernel estimate 
\bes
|{\bf c}(\lambda)|^{-2}~\varphi_{\lambda}(r)\leq  |{\bf c}(\lambda)|^{-2} ~\varphi_0(r),
\ees
and the Kunze-Stein phenomenon Lemma \ref{lem-KS} with $q=\tilde q=p$ because
\bes
\int_{0}^\infty V(r)~\varphi_0(r)~\varphi_0(r)^{p/2} dr< \infty.
\ees

For the high energy case: $\lambda\geq 1$, we will follow that idea that was used to prove the
abstract spectral theory by Guillarmou et. al. \cite[Theorem 3.1]{GHS}. See also Chen \cite{C2018, CH}. The idea of the proof is to use complex interpolation to the analytic family of operators
\bes
\chi_+^a\left(\lambda-\sqrt P\right).
\ees
where $\chi_+^a$ is a family of distributions, defined for $\Re a>-1$ by
\bes 
\chi_+^a = \frac{x_+^a}{\Gamma(a + 1)}, \:\: \textit{ with } \:\: x_+^a=\begin{cases} x^a, \:\: \textit{ if } \:\: x\geq 0,\\
0, \:\: \textit{ if } \:\: x<0. 
\end{cases}
\ees
Clearly, for $\Re a > 0$ we have 
\bes
\frac{d}{dx} \chi_+^a=\chi_+^{a-1}
\ees
and using this identity, one extends the family of functions $\chi_+^a$ to a family of distributions on $\R$ defined for all $a\in \C$.  Since $\chi_+^0(x) = H(x)$ is the Heaviside function, it follows that
\be \label{chi-delta-relation}
\chi_+^{-k} = \delta_0^{(k-1)}, \:\: k = 1, 2, \cdots,
\ee
and hence on the kernel level
\beas
&& \chi_+^0\left(\lambda-\sqrt P\right)(r)= \int_{0}^\lambda \varphi_\sigma(r)~|{\bf c(\sigma)}|^{-2}~d\sigma, \\
& \textit{ and } & \:\: \chi_+^{-k}\left(\lambda-\sqrt P\right)(r) = \left(\frac{d}{d\lambda}\right)^{k-1} \left(\varphi_\lambda(r)~|{\bf c(\lambda)}|^{-2}\right).
\eeas
Moreover, for any $w, z\in \C$, it is shown in \cite[p.86]{Ho} that 
\bes
\chi_+^w \ast \chi_+^z = \chi_+^{w+z+1}, 
\ees
where $\chi_+^w \ast \chi_+^z$ is the convolution of the distributions $\chi_+^w$ and $\chi_+^z$. Using this identity and the derivative estimates of the kernel $|{\bf c}(\lambda)|^{-2}~\varphi_\lambda$, we define, following \cite{GHS}, the operators $\chi_+^z\left(\lambda - \sqrt P\right)$ for $\Re z < 0$. For $k\in \N$ and $-(k+1)< \Re a<0$, we define
\beas
\chi_+^a\left(\lambda- \sqrt P\right)(r) &=& \chi_+^{k+a}\ast \chi_+^{-(k+1)}\left(\lambda- \sqrt P\right)(r)\\
&=& \int_{0}^\lambda \frac{\sigma^{k+a}}{\Gamma(k+a+1)} ~\left(\frac{d}{d\lambda}\right)^{k}\left(\varphi_{\lambda-\sigma}(r)~|{\bf c}(\lambda-\sigma)|^{-2}\right)~d\sigma.
\eeas

We claim that, by complex interpolation, it suffices to establish the following estimates:
\begin{itemize}
\item
on the line $\Re a = 0$, we have the estimate
\be \label{est-2-2}
\left\|\chi_+^{is}\left(\lambda- \sqrt P\right)\right\|_{L^2(S)\ra L^2(S)}\leq \frac{1}{|\Gamma(1+is)|}\leq C e^{\pi |s|/2};
\ee
\item on the line $\Re a = -(n + 1)/2$,
\be \label{est-1-infty-i}
\left\|\chi_+^{-(n+1)/2+is}\left(\lambda- \sqrt P\right)\right\|_{L^1(S)\ra L^\infty(S)} \leq C_1(1+|s|)~e^{\pi |s|/2} \lambda^{(n-1)/2};
\ee
\item for all $s\in \R$ and  
\be \label{est-1-infty-ii}
\left\|\chi_+^{-j-1+is}\left(\lambda- \sqrt P\right)\right\|_{L^1(S)\ra L^\infty(S)} \leq C_1(1+|s|)~e^{\pi |s|/2} \lambda^{(n-1)/2};
\ee
for all $j\in \N$ with $j\geq (n-1)/2$.
\end{itemize}

To see, by complex interpolation, that the estimates (\ref{est-2-2}) and (\ref{est-1-infty-i}) prove Theorem \ref{thm-spectral} at $p=2(n+1)/(n+3)$ we note that
\bes
-1=\frac{n-1}{n+1} \cdot 0+ \frac{2}{n+1} \cdot \left(-\frac{n+1}{2}\right) \:\: \textit{ and } \:\: \frac{n+3}{2(n+1)}= \frac{n-1}{n+1}\cdot \frac{1}{2}+ \frac{2}{n+1} \cdot 1.
\ees
On the other hand, the endpoint $p = 1$ is precisely the kernel estimates Corollary (\ref{est-phi-c}) for $j = 0$. Therefore, by Riesz-Thorin interpolation we complete the proof of Theorem \ref{thm-spectral} for $1\leq p\leq 2(n+1)/(n+3)$.   

To prove Theorem \ref{thm-spectral} for $p\in (2(n+1)/(n+3), 2)$, by complex interpolation, it suffices to use $L^2-L^2$ estimate (\ref{est-2-2}) and $L^1-L^\infty$ estimates (\ref{est-1-infty-ii}) for any integer $j > (n-1)/2$. Indeed, $\theta=j/(j+1)$ solves the equations
\bes
-1= \theta \cdot 0 +(1-\theta) (-j-1) \:\: \textit{ and } \:\: \frac{j+2}{2(j+1)}= \frac{\theta}{2}+ \frac{1-\theta}{1},
\ees
and hence we get $L^{p} \ra L^{p^\prime}$ boundedness of $\chi_+^{-1}$ for $p=2(j+1)/(j+2)$. Therefore, for $p$ close to $2$ we choose $j$ sufficiently large.

Therefore, to complete the proof of Theorem \ref{thm-spectral} it is enough to show the $L^1\ra L^\infty$ estimates (\ref{est-1-infty-i}) and (\ref{est-1-infty-ii}). In this direction we use the following result.
\begin{lem} \cite[Lemma 3.3]{GHS} \label{lem-GHS}
Suppose that $k\in \N$, that $-k < a < b < c$ and that $b = \theta a + (1- \theta)c$,~~$\theta \in(0, 1)$. Then there exists a constant $C$ such that for any $C^{k-1}$ function $f: \R \ra \C$ with compact support, one has
\bes
{\left\|\chi_+^{b+is} \ast f\right\|}_{L^\infty(\R)} \leq C(1+|s|)~e^{\pi |s|/2}~{\left\|\chi_+^{a} \ast f\right\|}^\theta_{L^\infty(\R)}  ~{\left\|\chi_+^{c} \ast f\right\|}^{1-\theta}_{L^\infty(\R)},
\ees
for all $s\in \R$.
\end{lem}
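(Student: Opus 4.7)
The plan is to apply a Hadamard three-lines (Stein complex interpolation) argument to the analytic family of convolutions $z \mapsto \chi_+^z \ast f$ on the strip $a \le \Re z \le c$. Since $f \in C^{k-1}(\R)$ has compact support and $-k < a$, the distribution $\chi_+^z \ast f$ is in fact a bounded continuous function of $x$ for every $z$ with $\Re z > -k$: using the convolution identity $\chi_+^{z_1} \ast \chi_+^{z_2} = \chi_+^{z_1+z_2+1}$ together with $\chi_+^{-k} = \delta_0^{(k-1)}$, one rewrites
\[
\chi_+^z \ast f = \chi_+^{z+k-1} \ast f^{(k-1)},
\]
and for $\Re z > -k$ the first factor is locally integrable while the second is bounded with compact support. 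The function $F(z):=\chi_+^z \ast f$ is then jointly analytic in $z$ and continuous in $x$.

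The key reduction is the identity
\[
\chi_+^{z}\ast f \;=\; \chi_+^{z-a-1} \ast (\chi_+^{a}\ast f) \;=\; \chi_+^{z-c-1} \ast (\chi_+^{c}\ast f),
\]
so that the boundary values of $F$ on the lines $\Re z=a$ and $\Re z=c$ are obtained by convolving the fixed bounded functions $g_a:=\chi_+^a\ast f$ and $g_c:=\chi_+^c\ast f$ with the one-parameter family of tempered distributions $\chi_+^{it-1}$. The two essential ingredients are then: (i) the pointwise-in-$t$ boundary estimate
\[
\|\chi_+^{it-1}\ast g\|_{L^\infty(\R)} \;\le\; C(1+|t|)\,e^{\pi|t|/2}\,\|g\|_{L^\infty(\R)}
\]
for $g$ of the form $g_a$ or $g_c$ (which inherit enough regularity and decay from $f$ and from the smoothing effect of convolution with $\chi_+^a$ or $\chi_+^c$), and (ii) the three-lines lemma applied pointwise in $x$ to the scalar-valued analytic function $z \mapsto F(z)(x)$, multiplied by a regularizer such as $e^{\delta z^2}$ to kill sub-exponential growth inside the strip.

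Combining (i) and (ii), the three-lines lemma delivers pointwise in $x$ and $s$
\[
|F(b+is)(x)| \;\le\; C(1+|s|)e^{\pi|s|/2}\,\|g_a\|_{L^\infty(\R)}^{\theta}\,\|g_c\|_{L^\infty(\R)}^{1-\theta},
\]
and taking the supremum in $x$ yields the lemma. The main obstacle will be the boundary estimate (i): the distribution $\chi_+^{it-1}$ is not an $L^1$ kernel and sits right at the edge of local integrability, so the bound must be extracted by writing $\chi_+^{it-1} = (d/dx)\chi_+^{it}$, transferring the derivative onto $g$ via integration by parts, exploiting $|x^{it}| = 1$ on $(0,\infty)$, and combining the pointwise size of the kernel $x_+^{it}/\Gamma(1+it)$ with Stirling's asymptotic $|\Gamma(1+it)|^{-1}\asymp (1+|t|)^{-1/2}e^{\pi|t|/2}$; the extra factor of $|t|$ in the final bound arises from the differentiation and from boundary contributions in the integration by parts. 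Once this one-parameter boundary estimate is established, the analytic propagation along the strip via the convolution identity and the three-lines argument is standard.
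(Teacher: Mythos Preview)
The paper does not give its own proof of this lemma; it is quoted directly from \cite[Lemma~3.3]{GHS} and used as a black box. Your sketch reproduces essentially the argument in that reference: fix $x$, consider the analytic function $z\mapsto (\chi_+^z\ast f)(x)$ on the strip $a\le\Re z\le c$ (well defined because $\chi_+^z\ast f=\chi_+^{z+k-1}\ast f^{(k-1)}$ with $\Re z+k-1>-1$), reduce the boundary behaviour on $\Re z=a,c$ via the semigroup identity $\chi_+^{a+it}\ast f=\chi_+^{it-1}\ast(\chi_+^a\ast f)$ to a single estimate for convolution with $\chi_+^{it-1}$, and then run a three-lines/Phragm\'en--Lindel\"of argument with a Gaussian regularizer $e^{\delta z^2}$ to absorb the exponential growth in $t$.

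One technical point deserves care. Your proposed route to the boundary bound (i), namely writing $\chi_+^{it-1}=(d/dx)\chi_+^{it}$ and integrating by parts onto $g$, requires $g_a=\chi_+^a\ast f$ to be differentiable, which fails in general when $a$ is close to $-k$ (you only know $g_a$ is continuous). In \cite{GHS} the estimate $\|\chi_+^{it-1}\ast g\|_{L^\infty}\le C(1+|t|)e^{\pi|t|/2}\|g\|_{L^\infty}$ is obtained instead from the explicit distributional formula for $x_+^{it-1}/\Gamma(it)$ together with Stirling's asymptotic $|\Gamma(it)|^{-1}\asymp |t|^{1/2}e^{\pi|t|/2}$, applied to the compactly supported continuous $g$; no derivative of $g$ is needed. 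With that correction the strategy is sound and matches the cited source.
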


Let $\eta\in C_c^\infty(\R)$ be a function such that $0\leq \eta(x)\leq 1$ for all $x\in \R$ and $\eta(x)\equiv 1$ for $|x|\leq2$ and $\eta(x) \equiv 0$ for $|x|\geq 4$. 

\bea \label{defn-F}
F_r^{s, \Lambda}(\lambda)= \begin{cases} \chi_+^{-3/2-is} \ast \left(\eta(\cdot/\Lambda)~\chi_+^{-k}\left(\cdot- \sqrt P\right)(r)\right)(\lambda),  \:\: \textit{when } \:\: n=2k; \\
\chi_+^{-2-is} \ast \left(\eta(\cdot/\Lambda)~\chi_+^{-k}\left(\cdot- \sqrt P\right)(r)\right)(\lambda),  \:\: \textit{when } \:\: n=2k+1.
\end{cases}
\eea
Here, $k\in \N$.
It follows that for $\lambda\leq \Lambda$
\beas
&& F_r^{s, \Lambda}(\lambda)=\chi_+^{-3/2-is} \ast \chi_+^{-k}\left(\lambda- \sqrt P\right)(r) =\chi_+^{-(n+1)/2-is}\left(\lambda-\sqrt P\right)(r),  \:\: \textit{when } \:\: n=2k;\\
&& F_r^{s, \Lambda}(\lambda)= \chi_+^{-2-is} \ast \chi_+^{-k}\left(\lambda- \sqrt P\right)(r)=\chi_+^{-(n+1)/2-is}\left(\lambda-\sqrt P\right)(r),  \:\: \textit{when } \:\: n=2k+1.
\eeas
Hence
\bea \label{chi-F-relation}
\left\|\chi_+^{-(n+1)/2-is}\left(\Lambda-\sqrt P\right)\right\|_{L^1(S)\ra L^\infty(S)}\leq \sup_{r>0} |F_r^{s, \Lambda}(\Lambda)|.
\eea
For the odd dimensional case $n=2k+1$, by Lemma \ref{lem-GHS} and (\ref{chi-delta-relation}) we get
\bea \label{est-F-0}
|F_r^{s, \Lambda}(\Lambda)| &\leq& \|F_r^{s,\Lambda}\|_{L^\infty(\R)} \nonumber\\
&\leq& C(1+|s|) e^{\pi|s|/2}~\sup_{\lambda>0} \left|\left(\chi_+^{-1} \ast \eta(\cdot/\Lambda)\chi_+^{-k}\left(\cdot-\sqrt P\right)(r)\right)(\lambda)\right|^{1/2} \nonumber\\
&& \times \sup_{\lambda>0} \left|\left(\chi_+^{-3} \ast \eta(\cdot/\Lambda)\chi_+^{-k}\left(\cdot-\sqrt P\right)(r)\right)(\lambda)\right|^{1/2} \nonumber\\
&\leq & C(1+|s|) e^{\pi|s|/2}~\sup_{\lambda>0} \left|\eta(\lambda/\Lambda)\chi_+^{-k}\left(\lambda-\sqrt P\right)(r)\right|^{1/2} \nonumber\\
&& \times \sup_{\lambda>0} \left|\frac{d^2}{d\lambda^2}\left(\eta(\lambda/\Lambda)\chi_+^{-k}\left(\lambda-\sqrt P\right)(r)\right)\right|^{1/2}.
\eea
Finally using pointwise estimates we get (\ref{est-1-infty-i}). To see this we first consider the case $r<1$. By Corollary (\ref{est-phi-c}) we get
\bea \label{est-F-1}
&& \left|\frac{d^2}{d\lambda^2} \left(\eta\left(\frac{\lambda}{\Lambda}\right) \frac{d^{k-1}}{d\lambda^{k-1}} \left(\varphi_\lambda(r)~|\bf c(\lambda)|^{-2}\right)\right) \right| \nonumber\\
&\leq & \left|\eta \left(\frac{\lambda}{\Lambda}\right) ~\frac{d^{k+1}}{d\lambda^{k+1}} \left(\varphi_\lambda(r)~|\bf c(\lambda)|^{-2}\right) \right|+ \left|\frac{1}{\Lambda} ~\frac{d}{d(\lambda/\Lambda)} \left(\eta \left(\frac{\lambda}{\Lambda}\right) \right) ~\frac{d^k}{d\lambda^k} \left(\varphi_\lambda(r)~|\bf c(\lambda)|^{-2}\right) \right| \nonumber\\
&& \left|\frac{1}{\Lambda^2} ~\frac{d^2}{d(\lambda/\Lambda)^2} \left(\eta \left(\frac{\lambda}{\Lambda}\right) \right) ~\frac{d^{k-1}}{d\lambda^{k-1}} \left(\varphi_\lambda(r)~|\bf c(\lambda)|^{-2}\right) \right| \nonumber\\
&\leq & \begin{cases}
\Lambda^{n-k-2} (1+\Lambda r)^{-(n-1)/2+k+1}, \:\: r\leq 1\\
\Lambda^{(n-1)/2}, \:\: r>1.
\end{cases}
\eea
We first consider the case $r\leq 1$. Using Corollary \ref{est-phi-c} and (\ref{est-F-1}) it follows from (\ref{est-F-0}) and (\ref{chi-F-relation}) that
\beas
\left\|\chi_+^{-(n+1)/2-is}\left(\Lambda-\sqrt P\right)\right\|_{L^1(S)\ra L^\infty(S)} &\leq& C(1+|s|) e^{\pi|s|/2} \left(\Lambda^{n-k} (1+\Lambda r)^{-(n-1)/2+k-1}\right)^{1/2}\\
&& \times \left(\Lambda^{n-k-2} (1+\Lambda r)^{-(n-1)/2+k+1}\right)^{1/2}\\
&\leq& C_s \Lambda^{n-k-1} (1+\Lambda r)^{-(n-1)/2+k}\\
&\leq & C_s \Lambda^{(n-1)/2}.
\eeas
We now consider the case $r>1$. Similarly,  using Corollary \ref{est-phi-c} and (\ref{est-F-1}) it follows from (\ref{est-F-0}) and (\ref{chi-F-relation}) that
\beas
\left\|\chi_+^{-(n+1)/2-is}\left(\Lambda-\sqrt P\right)\right\|_{L^1(S)\ra L^\infty(S)} &\leq& C(1+|s|) e^{\pi|s|/2} \left(\Lambda^{(n-1)/2}\right)^{1/2} \left(\Lambda^{(n-1)/2}\right)^{1/2}\\
&\leq & C_s \Lambda^{(n-1)/2}.
\eeas
This completes the proof of (\ref{est-1-infty-i}) for the case $n=2k+1$ (odd). We now consider the case $n=2k$ (even) and proceed as above. Using Lemma \ref{lem-GHS} it follows from (\ref{defn-F}) that
\bea \label{est-F-2}
|F_r^{s, \Lambda}(\Lambda)| &\leq& \|F_r^{s,\Lambda}\|_{L^\infty(\R)} \nonumber\\
&\leq& C(1+|s|) e^{\pi|s|/2}~\sup_{\lambda>0} \left|\left(\chi_+^{-1} \ast \eta(\cdot/\Lambda)\chi_+^{-k}\left(\cdot-\sqrt P\right)(r)\right)(\lambda)\right|^{1/2} \nonumber\\
&& \times \sup_{\lambda>0} \left|\left(\chi_+^{-2} \ast \eta(\cdot/\Lambda)\chi_+^{-k}\left(\cdot-\sqrt P\right)(r)\right)(\lambda)\right|^{1/2} \nonumber\\
&\leq & C(1+|s|) e^{\pi|s|/2}~\sup_{\lambda>0} \left|\eta(\lambda/\Lambda)\chi_+^{-k}\left(\lambda-\sqrt P\right)(r)\right|^{1/2} \nonumber\\
&& \times \sup_{\lambda>0} \left|\frac{d}{d\lambda}\left(\eta(\lambda/\Lambda)\chi_+^{-k}\left(\lambda-\sqrt P\right)(r)\right)\right|^{1/2}.
\eea

We first consider the case $r\leq 1$. Using Corollary \ref{est-phi-c} and (\ref{est-F-1}) it follows from (\ref{est-F-2}) that
\beas
\left\|\chi_+^{-(n+1)/2-is}\left(\Lambda-\sqrt P\right)\right\|_{L^1(S)\ra L^\infty(S)} &\leq& C(1+|s|) e^{\pi|s|/2} \left(\Lambda^{n-k} (1+\Lambda r)^{-(n-1)/2+k-1}\right)^{1/2}\\
&& \times \left(\Lambda^{n-k-1} (1+\Lambda r)^{-(n-1)/2+k}\right)^{1/2}\\
&\leq & C_s \Lambda^{(n-1)/2}.
\eeas
For the case $r>1$ we get
\beas
\left\|\chi_+^{-(n+1)/2-is}\left(\Lambda-\sqrt P\right)\right\|_{L^1(S)\ra L^\infty(S)} &\leq& C(1+|s|) e^{\pi|s|/2} \left(\Lambda^{(n-1)/2}\right)^{1/2} \left(\Lambda^{(n-1)/2}\right)^{1/2}\\
&\leq & C_s \Lambda^{(n-1)/2}.
\eeas

To prove (\ref{est-1-infty-ii}) we proceed similarly. By Lemma \ref{lem-GHS} and (\ref{chi-delta-relation}) we get
\beas
\left\|\chi_+^{-j-1-is}\left(\Lambda-\sqrt P\right)\right\|_{L^1(S)\ra L^\infty(S)} &\leq& C(1+|s|) e^{\pi|s|/2}~\sup_{\lambda>0} \left|\left(\chi_+^{-1} \ast \eta(\cdot/\Lambda)\chi_+^{-j}\left(\cdot-\sqrt P\right)(r)\right)(\lambda)\right|^{1/2}\\
&& \sup_{\lambda>0} \left|\left(\chi_+^{-3} \ast \eta(\cdot/\Lambda)\chi_+^{-j}\left(\cdot-\sqrt P\right)(r)\right)(\lambda)\right|^{1/2}\\
&\leq & C(1+|s|) e^{\pi|s|/2}~\sup_{\lambda>0} \left|\eta(\lambda/\Lambda)\chi_+^{-j}\left(\lambda-\sqrt P\right)(r)\right|^{1/2}\\
&& \times \sup_{\lambda>0} \left|\frac{d^2}{d\lambda^2}\left(\eta(\lambda/\Lambda)\chi_+^{-j}\left(\lambda-\sqrt P\right)(r)\right)\right|^{1/2}.
\eeas

Let $r\leq 1$. By Corollary \ref{est-phi-c}
and (\ref{est-F-1}) we get that

\beas
\left\|\chi_+^{-j-1-is}\left(\Lambda-\sqrt P\right)\right\|_{L^1(S)\ra L^\infty(S)} &\leq& C(1+|s|) e^{\pi|s|/2} \left(\Lambda^{n-j} (1+\Lambda r)^{-(n-1)/2+j-1}\right)^{1/2}\\
&& \times \left(\Lambda^{n-j-2} (1+\Lambda r)^{-(n-1)/2+j+1}\right)^{1/2}\\
&\leq& C_s \Lambda^{n-j-1} (1+\Lambda r)^{-(n-1)/2+j}.
\eeas
This implies
\bes
\left\|\chi_+^{-j-1-is}\left(\Lambda-\sqrt P\right)\right\|_{L^1(S)\ra L^\infty(S)} \leq C_s \begin{cases} c_j
\Lambda^{(n-1)/2}, \:\: \Lambda r\geq 1;\\
c_j \Lambda^{n-1-j}, \:\: \Lambda r <1.
\end{cases}
\ees
For the case $r>1$ we get
\beas
\left\|\chi_+^{-j-1-is}\left(\Lambda-\sqrt P\right)\right\|_{L^1(S)\ra L^\infty(S)} &\leq& C(1+|s|) e^{\pi|s|/2} \left(\Lambda^{(n-1)/2}\right)^{1/2} \left(\Lambda^{(n-1)/2}\right)^{1/2}\\
&\leq & C_s \Lambda^{(n-1)/2}.
\eeas

\section{Resolvent estimates}
In this section we prove Theorem \ref{thm-resolvent}. We will follow the idea of the proof of the resolvent estimates on non-traping metrices obtained in \cite{GH} (see also \cite{C2022}). We prove the result first in the region away from the spectrum
\bes 
\left\{z\in \C \backslash [0, \infty): \Re z \geq  0\:\: \textit{ or } \:\: |\arg z| \geq\eta\right\}
\ees for any $\eta > 0$ and then near the spectrum  
\bes
\{z\in \C \backslash [0, \infty): |\arg z| < \eta\}.
\ees
The estimate in the first region is a consequence of ellipticity and the Sobolev estimate. For the second region we will use the estimates in Corollary \ref{est-phi-c}. 
\subsection{Resolvent estimates away from spectrum}
We consider the simplest case first.
\begin{lem} \label{lem-res-1}
There exists $C>0$ such that for all $\beta\leq 0$,
\bes
\|(P-\beta)^{-1}\|_{L^q(S) \ra L^{q^\prime}(S)} \leq C, \:\:\:\: \textit{ for } \frac{2n}{n+2}\leq q < 2.
\ees
\end{lem}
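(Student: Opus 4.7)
The plan is to reduce the general case $\beta \le 0$ to the case $\beta = 0$ via positivity of the heat semigroup, and then handle the resulting estimate $\|P^{-1}\|_{L^q \to L^{q'}} \leq C$ by two separate arguments: a Sobolev embedding at the endpoint $q = 2n/(n+2)$, and the Kunze-Stein phenomenon (Lemma \ref{lem-KS}) in the open interior. First, since the heat kernel $h_t$ of $P$ is positive and radial, the resolvent kernel $K_\beta$ of $(P-\beta)^{-1}$ admits the representation
\bes
K_\beta(r) = \int_0^\infty e^{\beta t}\, h_t(r)\, dt,
\ees
and for $\beta \le 0$ this immediately yields $0 \leq K_\beta(r) \leq K_0(r)$ pointwise. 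Since convolution against a nonnegative kernel is monotone in the kernel when applied to $|f|$, it suffices to prove the estimate for $\beta = 0$ and extract the uniform constant.

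Next, I would establish the pointwise bound
\bes
K_0(r) \leq C \begin{cases} r^{-(n-2)}, & 0 < r \leq 1, \\ \varphi_0(r), & r > 1. \end{cases}
\ees
The local singularity comes from ellipticity of $P$: near the identity $S$ looks uniformly Euclidean, so the Green's function of $P$ matches the Newton-potential asymptotic. The global exponential decay can be extracted either from the spherical inversion representation $K_0(r) = C_S \int_0^\infty \lambda^{-2}\, \varphi_\lambda(r)\, |\mathbf{c}(\lambda)|^{-2}\, d\lambda$ combined with the pointwise domination $|\varphi_\lambda(r)| \leq \varphi_0(r)$ and the $\mathbf c$-function bounds in Lemma \ref{est-clambda}, or more robustly from Gaussian-type heat kernel bounds on $S$ integrated in $t$.

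For the endpoint $q = 2n/(n+2)$, I would invoke the Sobolev inequality on $S$ in its operator form: $P^{-1/2} : L^2(S) \to L^{2n/(n-2)}(S)$ is bounded. By duality $P^{-1/2} : L^{2n/(n+2)}(S) \to L^2(S)$ is also bounded, and composition yields
\bes
\|P^{-1}\|_{L^{2n/(n+2)} \to L^{2n/(n-2)}} \leq C.
\ees
For the open interior $q \in (2n/(n+2), 2)$, so that $q' \in (2, 2n/(n-2))$, I would apply Lemma \ref{lem-KS} with parameters $q = \tilde q = q'$, giving $\nu = 1$ and $\alpha = q'/2$, and the bound
\bes
\|P^{-1} f\|_{L^{q'}(S)} \leq C \|f\|_{L^q(S)} \left( \int_0^\infty V(r)\, \varphi_0(r)\, K_0(r)^{q'/2}\, dr \right)^{2/q'}.
\ees
Inserting the kernel bound from the previous step, the integrand is comparable to $r^{n-1-(n-2)q'/2}$ near $r=0$, integrable precisely when $q' < 2n/(n-2)$; and to $(1+r)^{1+q'/2} e^{Qr(2-q')/4}$ near infinity, integrable precisely when $q' > 2$. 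Both conditions hold throughout the open interior, and the constant is uniform in $\beta$ by the reduction step.

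The main obstacle is establishing the pointwise kernel bound $K_0(r) \leq C \varphi_0(r)$ on $r > 1$ with the right exponential decay; the naive spherical-inversion bound $|K_0(r)| \leq \varphi_0(r) \int_0^\infty \lambda^{-2}\, |\mathbf c(\lambda)|^{-2}\, d\lambda$ diverges at infinity for $n \geq 4$, so genuine oscillatory cancellation from $\varphi_\lambda$ is required, which is cleanest to harvest through the heat-semigroup representation above. The precise form of the Sobolev inequality for $P$ is secondary, being standard from the spherical Fourier analysis of Section 2.
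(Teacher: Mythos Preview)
Your approach is essentially the paper's: it too invokes the heat-kernel bounds from \cite{ADY} and the Kunze--Stein phenomenon (Lemma~\ref{lem-KZ}), following \cite[Lemma~8]{C2022}. Your reduction $0\le K_\beta\le K_0$ via the Laplace representation and the kernel asymptotics $K_0(r)\asymp r^{-(n-2)}$ near the origin, $K_0(r)\lesssim (1+r)e^{-Qr/2}$ at infinity are precisely what one obtains by integrating the heat kernel in $t$; you also correctly note that the naive spherical-inversion bound $|K_0(r)|\le\varphi_0(r)\int_0^\infty\lambda^{-2}|{\bf c}(\lambda)|^{-2}\,d\lambda$ diverges for $n\ge4$, so the heat-kernel route is the right one.

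One point needs tightening. At the endpoint $q=2n/(n+2)$ you invoke $P^{-1/2}:L^2\to L^{2n/(n-2)}$. Since $P=-\Delta-Q^2/4$ has spectrum $[0,\infty)$ with no gap, this is not the usual Sobolev inequality but its sharp Poincar\'e--Sobolev refinement
\[
\|f\|_{2n/(n-2)}^2\le C\Bigl(\|\nabla f\|_2^2-\tfrac{Q^2}{4}\|f\|_2^2\Bigr),
\]
which is not a consequence of the spherical analysis in Section~2 and is not referenced in the paper. A cleaner and self-contained fix is to split $K_0=K_0\,{\bf 1}_{\{r\le1\}}+K_0\,{\bf 1}_{\{r>1\}}$. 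For the far piece your own Kunze--Stein computation already works at the endpoint, since the integrand at infinity is $(1+r)^{1+q'/2}e^{Q(2-q')r/4}$, integrable for every $q'>2$. For the near piece, $K_0(r)\le Cr^{-(n-2)}$ with support in the unit ball is the local Riesz potential of order $2$ on a manifold of bounded geometry, and the local Hardy--Littlewood--Sobolev inequality yields $L^{2n/(n+2)}\to L^{2n/(n-2)}$ directly.
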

\begin{proof}
The proof is similar to that of \cite[Lemma 8]{C2022}, in view of the following estimates on the heat kernel $h_t$ 
\bes
h_t(r) \sim t^{-3/2} (1+r) \left(1+ \frac{1+r}{t}\right)^{(n-3)/2}  e^{-\frac{Q^2}{4}t- \frac{Q}{2} r -\frac{r^2}{4t}},
\ees
for $t>0$ and $r\geq 0$ \cite[page 664]{ADY} and the Kunze-Stein phenomenon Lemma \ref{lem-KZ}. Therefore we omit the details.
\end{proof}
In the remaining part of this section we will use the cut-off function $\psi \in C_c^\infty(\R)$ satisfying $supp~\psi \subseteq (1-\delta, 1+\delta)$ and $\psi\equiv 1$ on $[1-\delta/2, 1+\delta/2]$, for some small $\delta>0$. This spectral cut-off does not change the result but brings us some convenience to use the spectral measure estimates in the next subsection.
The following result is a simple consequence of Lemma \ref{lem-res-1}. For the proof we refer the reader to \cite[Lemma 9]{C2022}.
\begin{lem} 
Let $z\in \C$ with $|arg (z)|\geq \eta$, for some $\eta>0$. Then there exists $C>0$ depending on $\eta$ such that
\bes
\left\|(P-z)^{-1}\right\|_{L^{2n/(n+2)}(S) \ra L^{2n/(n-2)}(S)}\leq C. 
\ees
Moreover, there exists $C>0$ such that for all $z\in \C$
\bes
\left\|\left(1-\psi(P/|z|)\right)\left(P-z\right)^{-1}\right\|_{L^{2n/(n+2)}(S) \ra L^{2n/(n-2)}(S)} \leq C.
\ees
\end{lem}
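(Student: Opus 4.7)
The plan is to follow the template of \cite[Lemma 9]{C2022}, reducing both statements to Lemma \ref{lem-res-1} via the resolvent identity combined with an $L^2$-based auxiliary estimate; both assertions ultimately rely on uniform-in-$z$ Sobolev-type estimates for functions of $(P + |z|)$.

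\textbf{First assertion.} Given $z$ with $|\arg z| \geq \eta$, elementary geometry gives $\mathrm{dist}(z, [0, \infty)) \geq c_\eta |z|$, so the self-adjoint functional calculus for $P$ yields $\|(P - z)^{-1}\|_{L^2(S) \to L^2(S)} \leq (c_\eta |z|)^{-1}$. I then apply the resolvent identity with $w = -|z|$,
\bes
(P - z)^{-1} = (P - w)^{-1} + (z - w)(P - z)^{-1}(P - w)^{-1}.
\ees
Lemma \ref{lem-res-1} handles the first term uniformly in $z$ as $w \leq 0$. For the second, I factor symmetrically using commutativity,
\bes
(z - w)(P - z)^{-1}(P - w)^{-1} = (z - w)\,(P + |z|)^{-1/2}(P - z)^{-1}(P + |z|)^{-1/2},
\ees
and combine (i) the uniform Sobolev-type bound $\|(P + |z|)^{-1/2}\|_{L^2(S) \to L^{2n/(n-2)}(S)} \leq C$ together with its dual $\|(P + |z|)^{-1/2}\|_{L^{2n/(n+2)}(S) \to L^2(S)} \leq C$, (ii) the $L^2$-bound above, and (iii) the numerical estimate $|z - w| \leq 2|z|$. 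The Sobolev-type bounds follow from Lemma \ref{lem-res-1} together with the subordination identity $(P + |z|)^{-1/2} = \pi^{-1/2} \int_0^\infty t^{-1/2} e^{-t|z|} e^{-tP}\,dt$ and the heat kernel estimates already invoked in the proof of Lemma \ref{lem-res-1}.

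\textbf{Second assertion.} For arbitrary $z \in \C$, the support of $1 - \psi(\cdot/|z|)$ lies in $\{t \geq 0 : |t/|z| - 1| \geq \delta/2\}$. A short case analysis separating $t \leq |z|(1 - \delta/2)$ from $t \geq |z|(1 + \delta/2)$, and within each case the behaviour of $\Re z$ and $\Im z$, shows that
\bes
|t - z| \geq c_\delta (t + |z|) \quad \text{for all such } t \text{ and all } z \in \C.
\ees
Set $n_z(t) := (1 - \psi(t/|z|))\,(t + |z|)/(t - z)$, which is a bounded smooth function satisfying $|n_z(t)| \leq C_\delta$ with $|z|$-rescaled derivatives bounded uniformly in $z$; by construction
\bes
(1 - \psi(P/|z|))(P - z)^{-1} = n_z(P)\,(P + |z|)^{-1}.
\ees
The desired estimate then splits into the uniform Sobolev-type bound $\|(P + |z|)^{-1}\|_{L^{2n/(n+2)}(S) \to L^{2n/(n-2)}(S)} \leq C$ coming from Lemma \ref{lem-res-1}, and the uniform multiplier bound $\|n_z(P)\|_{L^{2n/(n-2)}(S) \to L^{2n/(n-2)}(S)} \leq C$, the latter following from a Mihlin--H\"ormander type spectral multiplier theorem for $P$ on Damek-Ricci spaces.

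\textbf{Main obstacle.} The delicate point is making the spectral multiplier bound for $n_z(P)$ uniform in $z$ on $L^{2n/(n-2)}(S)$. Should an off-the-shelf Mihlin--H\"ormander theorem not apply in the required form, I would instead represent $(1 - \psi(P/|z|))(P - z)^{-1}$ as convolution with a radial kernel written via the spectral resolution $|\mathbf{c}(\lambda)|^{-2}\varphi_\lambda(r)\,d\lambda$, and bound the resulting integrand directly through the sharp kernel estimates in Corollary \ref{est-phi-c} combined with the Kunze-Stein phenomenon (Lemma \ref{lem-KZ}), in the spirit of the proof of Theorem \ref{thm-spectral}.
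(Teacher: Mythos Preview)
Your treatment of the first assertion is correct and matches the approach of \cite[Lemma 9]{C2022} that the paper cites: resolvent identity with $w=-|z|$, the $TT^*$-type square-root Sobolev bound for $(P+|z|)^{-1/2}$ (which in fact follows directly from Lemma \ref{lem-res-1} by self-adjointness, without the subordination formula), and the $L^2$ bound $\|(P-z)^{-1}\|_{L^2\to L^2}\le (c_\eta|z|)^{-1}$.

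For the second assertion, however, you have introduced an unnecessary obstacle. The route via a Mihlin--H\"ormander multiplier bound for $n_z(P)$ on $L^{2n/(n-2)}(S)$ is not needed, and the fallback through Corollary \ref{est-phi-c} would be considerable overkill for what the paper calls a ``simple consequence of Lemma \ref{lem-res-1}''. The point is that exactly the same resolvent-identity scheme as in the first part works again, once you observe that the cut-off itself supplies the missing $L^2$ bound. Indeed, your own support analysis gives $|t-z|\ge c_\delta(t+|z|)\ge c_\delta|z|$ on $\mathrm{supp}\,(1-\psi(\cdot/|z|))$, so by the spectral theorem
\bes
\left\|(1-\psi(P/|z|))(P-z)^{-1}\right\|_{L^2(S)\to L^2(S)}\le \frac{C_\delta}{|z|},
\ees
uniformly in $z\in\C$. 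Now write, exactly as before,
\bes
(1-\psi(P/|z|))(P-z)^{-1}=(1-\psi(P/|z|))(P+|z|)^{-1}+(z+|z|)\,(P+|z|)^{-1/2}\big[(1-\psi(P/|z|))(P-z)^{-1}\big](P+|z|)^{-1/2}.
\ees
The second term is handled by $|z+|z||\le 2|z|$, the square-root Sobolev bounds, and the $L^2$ estimate just displayed. For the first term, split $(1-\psi(P/|z|))(P+|z|)^{-1}=(P+|z|)^{-1}-(P+|z|)^{-1/2}\,\psi(P/|z|)\,(P+|z|)^{-1/2}$; the first piece is Lemma \ref{lem-res-1}, and in the second piece $\psi(P/|z|)$ is trivially bounded on $L^2$ with norm at most $1$. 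No $L^p$ multiplier theorem enters anywhere---only the $L^2$ functional calculus. This is the argument of \cite[Lemma 9]{C2022} that the paper is referring to.
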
 

\begin{cor}
For each $\eta>0$, there exists $C>0$ such that for $p\in [2n/(n+2), 2]$ we have 
\begin{enumerate}
\item For all $z\in \C$ with $|arg z|>\eta$ 
\bes
\left\|(P-z)^{-1}\right\|_{L^p(S) \ra L^{p^\prime}(S)}\leq C|z|^{n(1/p-1/2)-1}.
\ees

\item For all $z\in \C$
\bes
\left\|\left(1-\psi(P/|z|)\right)\left(P-z\right)^{-1}\right\|_{L^p(S) \ra L^{p^\prime}(S)} \leq C|z|^{n(1/p-1/2)-1}.
\ees
\end{enumerate}
\end{cor}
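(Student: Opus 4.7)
The plan is to obtain the desired range $p \in [2n/(n+2), 2]$ by Riesz-Thorin interpolation between the endpoint estimates already established in the preceding lemma (namely the $L^{2n/(n+2)} \to L^{2n/(n-2)}$ bounds) and an $L^2 \to L^2$ bound with a $1/|z|$ decay. The corollary is essentially a dilation/interpolation exercise, with the key point being to produce the sharp $|z|^{-1}$ factor in the $L^2$ endpoint.

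For part $(1)$, since the spectrum of $P$ is $[0,\infty)$, the spectral theorem gives
\bes
\left\|(P-z)^{-1}\right\|_{L^2(S) \to L^2(S)} \le \frac{1}{\mathrm{dist}(z, [0,\infty))}.
\ees
Under the hypothesis $|\arg z| \ge \eta$, a simple planar geometry argument shows $\mathrm{dist}(z, [0,\infty)) \ge |z| \sin \eta$ (or $|z|$, when $\Re z \le 0$), so $\|(P-z)^{-1}\|_{L^2 \to L^2} \le C_\eta / |z|$. Combined with the previous lemma's bound $\|(P-z)^{-1}\|_{L^{2n/(n+2)} \to L^{2n/(n-2)}} \le C$, Riesz-Thorin applied at the interpolation parameter $\theta \in [0,1]$ defined by $1/p = (1-\theta)(n+2)/(2n) + \theta/2$ (equivalently $\theta = 1 - n(1/p - 1/2)$) yields
\bes
\left\|(P-z)^{-1}\right\|_{L^p(S) \to L^{p^\prime}(S)} \le C^{1-\theta} (C_\eta/|z|)^\theta = C' |z|^{n(1/p-1/2)-1},
\ees
which is exactly the claim for $p \in [2n/(n+2), 2]$.

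For part $(2)$, the same interpolation strategy works once we establish the uniform $L^2 \to L^2$ bound
\bes
\left\|\bigl(1-\psi(P/|z|)\bigr)(P-z)^{-1}\right\|_{L^2(S) \to L^2(S)} \le C_\delta/|z|
\ees
for \emph{all} $z \in \C$. By spectral calculus, the left-hand side is dominated by $\sup_{s \ge 0} |1-\psi(s/|z|)|/|s-z|$. Since $1-\psi(s/|z|)$ vanishes when $s/|z| \in [1-\delta/2, 1+\delta/2]$, we only need a lower bound on $|s-z|$ for $s \ge 0$ with $|s/|z|-1| \ge \delta/2$. Writing $s = |z|u$ and $z = |z|e^{i\theta}$ one computes $|s-z|^2/|z|^2 = (u-\cos\theta)^2 + \sin^2\theta$; then a short case analysis (separating $|\sin\theta| \ge \delta/4$ from $|\sin\theta| < \delta/4$, so that $|\cos\theta|$ is close to $1$) shows $|s-z| \ge c_\delta |z|$ uniformly. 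Interpolating this bound with the previous lemma's uniform bound on $\|(1-\psi(P/|z|))(P-z)^{-1}\|_{L^{2n/(n+2)} \to L^{2n/(n-2)}}$ gives the result.

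The only mildly subtle step is the geometric $|s-z| \gtrsim |z|$ argument for part $(2)$, but everything else is a direct Riesz-Thorin calculation that reproduces the correct $|z|^{n(1/p-1/2)-1}$ exponent from the scaling relation $\theta = 1 - n(1/p-1/2)$.
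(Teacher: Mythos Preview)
Your proposal is correct and follows essentially the same approach as the paper: Riesz--Thorin interpolation between the $L^{2n/(n+2)}\to L^{2n/(n-2)}$ bound from the preceding lemma and an $L^2\to L^2$ bound of size $C/|z|$ obtained from the spectral theorem together with an elementary geometric lower bound on $|\sigma-z|$. Your treatment of part~(2) is in fact more detailed than the paper's, which simply asserts $\sup_{\sigma>0}\frac{1-\psi(\sigma/|z|)}{|\sigma-z|}\le C/|z|$ without spelling out the case analysis.
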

\begin{proof}
It is enough to show the $L^2-L^2$ estimate for both the cases, as the rest follows by Riesz-Thorin interpolation using the proposition above. Clearly, 
\bes
\left\|(P-z)^{-1}\right\|_{L^2(S)\ra L^2(S)}= \sup_{\sigma>0} \frac{1}{|\sigma-z|}.
\ees
If $\Re z<0$, then $|\sigma-z|\geq |z|$ and if $\Re z\geq 0$, then 
\bes
|\sigma-z|^2\geq  (\Im z)^2 \geq \frac{(\Im z)^2}{2}+ \frac{\tan^2 \eta  (\Re z)^2}{2} \geq C|z|^2,
\ees
where $C= \min\{\frac{1}{2}, \frac{\tan^2 \eta}{2}\}$. Therefore
\bes
{\left\|(P-z)^{-1}\right\|}_{2\ra 2} \leq \frac{C_\eta}{|z|}.
\ees
Similarly,  
\bes
{\left\|\left(1-\psi(P/|z|)\right)\left(P-z\right)^{-1}\right\|}_{L^2(S) \ra L^2(S)} \leq \sup_{\sigma>0} \frac{1-\psi(\sigma/|z|)}{|\sigma-z|} \leq \frac{C}{|z|}.
\ees
\end{proof}

\begin{rem}
For all $z\in \C$ with $|z|\geq 1$, $|arg z|>\eta$, we have
\beas
{\left\|(P-z)^{-1}\right\|}_{L^p(S)\ra L^{p^\prime}(S)} &\leq& C
\begin{cases} |z|^{n(1/p-1/2)-1}, \:\: \frac{2n}{n+2}\leq p\leq \frac{2(n+1)}{n+3};\\
 |z|^{1/2-1/p}, \:\:\:\: \frac{2(n+1)}{n+3}\leq p\leq 2.\end{cases}
\eeas
\end{rem}
\begin{proof}
In the range $[2n/(n+2), 2(n+1)/(n+3)$ the inequality follows from the previous corollary. In the range, $[2(n+1)/(n+3), 2]$ this is also true once we notice that
\bes
n\left(\frac{1}{p}-\frac{1}{2}\right)-1 \leq \frac{1}{2}-\frac{1}{p}.
\ees
This completes the proof.
\end{proof}
 Following the above argument we also get the following:
\begin{rem}
For all $z\in \C$ we have
\beas
{\left\|\left(1-\psi(P/|z|)\right)(P-z)^{-1}\right\|}_{L^p(S)\ra L^{p^\prime}(S)} &\leq& C
\begin{cases} |z|^{n(1/p-1/2)-1}, \:\: \frac{2n}{n+2}\leq p\leq \frac{2(n+1)}{n+3};\\
 |z|^{1/2-1/p}, \:\: \frac{2(n+1)}{n+3}\leq p\leq 2.\end{cases}
\eeas
\end{rem} 

\subsection{Resolvent estimates near the spectrum}

\begin{prop} \label{prop-main}
Let $z\in \{w\in \C: \Re w>0, |w|>1, |arg~w|< \eta\}$, we have
\beas
{\left\|\psi(P/|z|)(P-z)^{-1}\right\|}_{L^p(S)\ra L^{p^\prime}(S)} &\leq& C
\begin{cases} |z|^{n(1/p-1/2)-1}, \:\: \frac{2n}{n+2}\leq p\leq \frac{2(n+1)}{n+3};\\
 |z|^{1/2-1/p}, \:\: \frac{2(n+1)}{n+3}\leq p< 2.\end{cases}
\eeas
\end{prop}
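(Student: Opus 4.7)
The plan is to represent the operator via spectral calculus as convolution with a radial kernel,
\[
\psi(P/|z|)(P-z)^{-1} f = f * K_z, \qquad K_z(r) = \int_0^\infty \frac{\psi(\lambda^2/|z|)}{\lambda^2 - z}\, |\mathbf c(\lambda)|^{-2}\, \varphi_\lambda(r)\, d\lambda,
\]
and to reduce the operator bound to a pointwise estimate on $K_z$. Writing $\Lambda := |z|^{1/2}$, the cutoff $\psi$ localizes $\lambda$ to the annulus $\lambda \sim \Lambda$, while the pole $\lambda = \sqrt z$ of $(\lambda^2-z)^{-1}$ lies within distance $\sim \Lambda\,|\sin(\arg z/2)|$ of the real axis; since the bound must be uniform as $|\arg z|\to 0$, no naive supremum estimate on $(\lambda^2-z)^{-1}$ is available, and this is the principal difficulty.

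For the range $p \in [2(n+1)/(n+3),\,2)$, I would apply the Kunze--Stein phenomenon (Lemma \ref{lem-KS}) with $q = \tilde q = p'$, reducing the claim to
\[
\Bigl(\int_0^\infty V(r)\,\varphi_0(r)\,|K_z(r)|^{p'/2}\,dr\Bigr)^{2/p'} \lesssim |z|^{1/2 - 1/p}.
\]
A pointwise bound on $K_z(r)$ follows from the change of variable $u = \lambda/\Lambda$, after which the integrand becomes a smooth, compactly supported function $\phi(u)$ divided by $(u^2 - e^{i\arg z})$. Applying Sokhotski--Plemelj, the residue at the pole $u = e^{i\arg z/2}$ is (up to a factor of $(2\Lambda)^{-1}$) the spectral projection kernel $|\mathbf c(\Lambda)|^{-2}\varphi_\Lambda(r)$, whose convolution operator is controlled with the right $|z|$-power by Theorem \ref{thm-spectral}. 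The remaining Cauchy principal value is handled by integration by parts in $u$, using the derivative estimates of Corollary \ref{est-phi-c} in the three regimes $r \le 1/\Lambda$, $1/\Lambda < r \le 1$, and $r > 1$; the exponential decay of $\varphi_\Lambda(r)$ for $r>1$ beats the exponential growth of the weight $V(r)\varphi_0(r)$ precisely when $p<2$, so the $r$-integral converges.

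For $p \in [2n/(n+2),\,2(n+1)/(n+3)]$ I would adapt the Stein complex-interpolation argument of Section~4. Constructing an analytic family $T^{(a)}_z$ with $T^{(-1)}_z$ essentially equal to $\psi(P/|z|)(P-z)^{-1}$, the line $\Re a = 0$ carries an $L^2\to L^2$ bound from functional calculus, while on $\Re a = -(n+1)/2$ the pointwise kernel estimate from the previous paragraph, combined with Lemma \ref{lem-GHS}, gives an $L^1\to L^\infty$ bound of size $|z|^{(n-1)/2 - 1}$. Stein interpolation at $a=-1$ yields the endpoint $p = 2(n+1)/(n+3)$ with exponent $|z|^{n(1/p-1/2)-1}$, and Riesz--Thorin with the $p = 2n/(n+2)$ endpoint from Lemma \ref{lem-res-1} fills in the rest of the interval. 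The delicate step is to ensure that the principal-value contribution to $K_z$ stays no larger than the Plemelj (residue) contribution; the sharp $\lambda$-scaling of Corollary \ref{est-phi-c}, and in particular the distinction between the regimes $r \le 1/\Lambda$ and $\Lambda r \ge 1$, is exactly what makes the bookkeeping close to the claimed exponents.
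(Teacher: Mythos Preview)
Your proposal contains two gaps that prevent it from reaching the stated exponents.

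For the range $p\in[2(n+1)/(n+3),2)$, the Kunze--Stein reduction requires a pointwise bound on $K_z(r)$ sharp enough to yield $|z|^{1/2-1/p}$ after the weighted $r$-integral. After the Sokhotski--Plemelj split the residue piece is indeed $\asymp\Lambda^{-1}P_\Lambda$ (with $\Lambda=|z|^{1/2}$) and is correctly handled by Theorem~\ref{thm-spectral}. But the principal-value piece $\Lambda^{-1}\,\mathrm{PV}\!\int g(u)(u-1)^{-1}\,du$, with $g(u)=\psi(u^2)(u+1)^{-1}|{\bf c}(\Lambda u)|^{-2}\varphi_{\Lambda u}(r)$, is only controlled by $\Lambda^{-1}(\|g\|_\infty+\|g'\|_\infty)$, and $\partial_u$ costs a factor $\Lambda$ on the spectral kernel by Corollary~\ref{est-phi-c}. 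One obtains, for instance when $r>1$, $|K^{PV}_z(r)|\lesssim \Lambda^{(n-1)/2}r\,e^{-Qr/2}$, and plugging this into Lemma~\ref{lem-KS} gives an operator bound of order $|z|^{(n-1)/4}$, not $|z|^{1/2-1/p}$. Iterating the integration by parts only makes this worse, since successive antiderivatives of $(u-1)^{-1}$ do not decay. The paper's route is different: it first reduces to real $z$ by Phragm\'en--Lindel\"of (Lemma~\ref{lem-reduction}) and then runs Stein interpolation on the family $H_{s,z}(x)=e^{s^2}|z|^s\psi(x^2)(1-x^2\pm i0)^s$. On the line $\Re s=-j-1$ with any integer $j\ge(n-1)/2$ one integrates by parts $j-1$ times against $(1-\lambda)^{-2+it}$ and then invokes the $\chi_+$ interpolation lemma (see (\ref{est-lambda})) to land between the $(j-1)$st and $(j+1)$st derivatives of $F_z$; the built-in factor $|z|^{-j-1/2}$ beats the $|z|^{j/2}$ coming from those derivatives and produces the crucial $L^1\to L^\infty$ bound $|z|^{-1/2}$. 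It is precisely the freedom to move to large negative $\Re s$ that is missing from a direct kernel estimate.

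For the range $p\in[2n/(n+2),2(n+1)/(n+3)]$, your invocation of Lemma~\ref{lem-res-1} at the endpoint $p=2n/(n+2)$ is not legitimate: that lemma treats $(P-\beta)^{-1}$ with $\beta\le 0$, i.e.\ the resolvent \emph{away} from the spectrum, whereas Proposition~\ref{prop-main} concerns $z$ with $\Re z>0$ approaching $[0,\infty)$. The paper obtains this endpoint instead from the complex-interpolation line $\Re s=-n/2$, where a separate kernel computation (via integration by parts $(n-1)/2$ times in the odd case, and a further Phragm\'en--Lindel\"of between $\Re s=-(n\pm1)/2$ in the even case) gives the $L^1\to L^\infty$ bound $O(1)$. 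Note also that your stated exponent $|z|^{(n-1)/2-1}$ on the line $\Re a=-(n+1)/2$ does not interpolate to the required $|z|^{-1/(n+1)}$ at $p=2(n+1)/(n+3)$; the correct bound on that line is $|z|^{-1/2}$.
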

We first show that to prove Proposition \ref{prop-main}, it suffices to prove the estimates on the real line. This is illustrated in the following lemma.
\begin{lem} \label{lem-reduction}
Let $|\alpha|>1, \eta \in (0, \pi/2)$ and $p \in (1,\infty)$. Assume that we have estimates
\begin{equation*}
{\left\|{\left(-\Delta - \alpha \right)}^{-1}\right\|}_{L^p(S) \to L^{p^\prime}(S)} \le C {|\alpha|}^{\gamma(p,n)} \:,
\end{equation*}
on the ray $arg \: \alpha = \eta$, $Re\: \alpha >0$ and the following estimate on the spectrum:
\bes
{\left\|{\left(-\Delta - (\alpha + i0)\right)}^{-1}\right\|}_{L^p(S) \to L^{p^\prime}(S)} \le C \:,
\ees
for $\alpha >0$. Then there exists $C>0$ such that for all $\alpha \in \C$  satisfying $0 \le arg \: \alpha \le \eta$, one has 
\bes
{\left\|{\left(-\Delta - \alpha \right)}^{-1}\right\|}_{L^p(S) \to L^{p^\prime}(S)} \le C {|\alpha|}^{\gamma(p,n)} \:.
\ees
\end{lem}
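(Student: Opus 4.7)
The strategy is to deduce the sectorial bound by invoking the Phragm\'en-Lindel\"of principle (maximum modulus for a sector) applied to a scalar-valued holomorphic function derived from the resolvent. By duality it is enough to control, uniformly over test functions $f, g \in C_c^\infty(S)$ with $\|f\|_{L^p} = \|g\|_{L^p} = 1$, the sesquilinear pairing $\alpha \mapsto \langle (-\Delta - \alpha)^{-1} f, g\rangle$, which is holomorphic on the resolvent set $\C \setminus [Q^2/4, \infty)$.

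Fix such $f, g$ and consider the open sector $\Sigma_\eta = \{\alpha \in \C : 0 < \arg \alpha < \eta,\ |\alpha| > 1\}$, which lies entirely in the resolvent set. Define
$$F(\alpha) \;=\; \alpha^{-\gamma(p,n)}\, \langle (-\Delta - \alpha)^{-1} f, g \rangle,$$
using the principal branch of $\alpha^{-\gamma(p,n)}$, so that $F$ is holomorphic on $\Sigma_\eta$. It extends continuously to $\overline{\Sigma_\eta}$ by interpreting the boundary values on $\arg \alpha = 0$ through the limiting absorption principle underlying the second hypothesis. The first hypothesis translates to $|F(\alpha)| \le C$ on the ray $\arg \alpha = \eta$, and the second to $|F(\alpha)| \le C|\alpha|^{-\gamma(p,n)}$ on the ray $\arg \alpha = 0$, $|\alpha| > 1$; with the factor $\alpha^{-\gamma(p,n)}$ absorbed into $F$, these become boundary data of a form compatible with the maximum principle.

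To apply Phragm\'en-Lindel\"of one must verify an a priori growth bound on $F$ inside $\Sigma_\eta$ that is subcritical for a sector of opening $\eta$, i.e., strictly slower than $\exp(|\alpha|^{\pi/\eta})$. This follows from the elementary self-adjoint bound $\|(-\Delta - \alpha)^{-1}\|_{L^2 \to L^2} \le \mathrm{dist}(\alpha,\, [Q^2/4, \infty))^{-1}$ on any proper subsector away from the real axis, upgraded to an $L^p \to L^{p^\prime}$ bound via elliptic regularity and Sobolev embedding (in the spirit of Lemma \ref{lem-res-1}); this produces at most polynomial growth of $F$. Since $\eta \in (0, \pi/2)$ we have $\pi/\eta > 2$, so polynomial growth is comfortably subcritical.

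With the boundary bounds and the growth control in place, Phragm\'en-Lindel\"of (implemented, for instance, by conformally mapping $\Sigma_\eta$ to a half-plane via $\alpha \mapsto \alpha^{\pi/\eta}$ and then applying the Hadamard three-lines theorem in the logarithmic variable) propagates the boundary bound into the sector, yielding $|F(\alpha)| \le C$ throughout $\Sigma_\eta$; translating back gives $|\langle (-\Delta - \alpha)^{-1} f, g\rangle| \le C|\alpha|^{\gamma(p,n)}$, and taking the supremum over $f, g$ closes the proof. The main subtlety is matching the two differing boundary rates to a common scale so that the maximum principle can be invoked; this is precisely what the normalisation $\alpha^{-\gamma(p,n)}$ achieves, and verifying the joint continuity of $F$ up to the spectral boundary ray (where the second hypothesis enters in its limiting-absorption form) is the step that must be performed with care.
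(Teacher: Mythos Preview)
Your approach is essentially the same as the paper's: pair the resolvent against test functions in $C_c^\infty(S)$ to obtain a scalar holomorphic function on the sector, check continuous extension to the closure, verify a subcritical interior growth bound, and invoke Phragm\'en--Lindel\"of. The paper implements the last step via the substitution $\alpha=e^z$, reducing to the half-strip $\{x>0,\ 0<y<\eta\}$, which is equivalent to your conformal-map formulation; for the interior a-priori bound the paper simply quotes the uniform $L^{2n/(n+2)}\to L^{2n/(n-2)}$ resolvent estimate rather than the $L^2$ bound plus Sobolev embedding you use, but either suffices.

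One point to watch: your normalising factor $\alpha^{-\gamma(p,n)}$ only does what you want when $\gamma(p,n)\ge 0$. In the intended application (Theorem~\ref{thm-resolvent}) one has $\gamma(p,n)\le 0$, so on the real boundary ray your normalised $F$ is bounded by $C|\alpha|^{-\gamma}$, which \emph{grows}; Phragm\'en--Lindel\"of then does not deliver $|F|\le C$ in the sector as you assert. The paper sidesteps this by working with the un-normalised pairing $F(\alpha)=\langle(-\Delta-\alpha)^{-1}\varphi,\psi\rangle$: since $\gamma\le 0$ and $|\alpha|>1$, the bound $C|\alpha|^{\gamma}$ on the ray $\arg\alpha=\eta$ already implies $|F|\le C$ there, so both boundary rays carry the uniform bound $C$ and Phragm\'en--Lindel\"of applies directly. (As stated, the lemma's conclusion $\le C|\alpha|^{\gamma}$ is for $\gamma<0$ formally stronger than what either argument extracts from the written hypotheses; in the actual application the real-axis estimate is itself of order $|\alpha|^{\gamma}$, so this discrepancy is cosmetic.)
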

\begin{proof}
Let $\varphi, \psi \in C_c^\infty(S)$ with ${\|\varphi\|}_p \le 1,\: {\|\psi\|}_p \le 1$\:. Then define,
\begin{equation*}
F(\alpha) := \langle {\left(-\Delta - \alpha \right)}^{-1} \varphi, \psi \rangle \:,
\end{equation*}
which is holomorphic in the sector $0 < arg \:\alpha < \eta$. $F$ also extends continuously to $\overline{U_\eta}$, where
\begin{equation*}
U_\eta = \{\alpha \in \C : 0 < arg \: \alpha < \eta,\: |\alpha|>1\}
\end{equation*}
(see \cite{MW}). Now we have 
\begin{equation*}
|F(\alpha)| \le C {\|\varphi\|}_p {\|\psi\|}_p \:,
\end{equation*}
on the two rays mentioned in the hypothesis. By uniform resolvent it follows that, for all $\alpha \in U_\eta$,
\begin{equation*}
|F(\alpha)| \le C {\|\varphi\|}_{2n/n+2} {\|\psi\|}_{2n/n+2} \:.
\end{equation*}
Then applying the Phragmen-Lindelof for the half-strip $\{z=x+iy: x>0, \:0<y<\eta\}$ \:,
on the function $G(z):=F(e^z),$ we get the required estimates.
\end{proof}
Now we are in a position to give a proof of proposition \ref{prop-main}.
\begin{proof}[Proof of Proposition \ref{prop-main}]
The strategy to prove the first estimate
is to use the Stein's complex interpolation for the analytic family of operators $H_{s, z}\left(\sqrt{L/|z|}\right)$, where
\bes
H_{s, z}(x) = e^{s^2}|z|^s\psi\left(x^2\right)(1-x^2 \pm i0)^s.
\ees
In view of Lemma \ref{lem-reduction}, it suffices to prove $(L^p, L^{p^\prime})$ estimates of the operator
\bes
H_{-1, z} \left(\sqrt{L/|z|}\right)=e\:\psi\left(L/|z|\right) \left(z-L\pm i0\right)^{-1},
\ees
on the spectrum, that is,  for $z>1$. 

We claim that, by complex interpolation, it suffices to establish that
\be \label{2-2-rev}
{\left\|H_{it, z}\left(\sqrt{ L/|z|}\right)\right\|}_{L^2(S)\ra L^2(S)}\leq C_t; 
\ee
\be \label{1-infty-rev-0}
{\left\|H_{-n/2+it, z}(\sqrt{L/|z|})\right\|}_{L^1(S) \ra L^\infty(S)}\leq C_t,
\ee
and 
\be \label{1-infty-rev}
{\left\|H_{-j-1+it, z}(\sqrt{L/|z|})\right\|}_{L^1(S) \ra L^\infty(S)}\leq C_t |z|^{-1/2},
\ee
with any integer $j \geq (n-1)/2$.

We first observe that, using complex interpolation, the estimates (\ref{2-2-rev}) and (\ref{1-infty-rev-0}) prove Proposition \ref{prop-main} at $p=2n/(n+2)$. This is because $\theta= (n-2)/n$ solves
\bes
-1= \theta \cdot 0+(1-\theta) \cdot \frac{n}{2} \:\: \textit{ and } \:\: \frac{n+2}{2n}= \frac{\theta}{2}+ \frac{1-\theta}{1},
\ees
and hence
\be
\|H_{-1, z}\|_{L^{2n/(n+2)}(S) \ra L^{2n/(n-2)}(S)} \leq C_{j, t}.
\ee
Also the estimates (\ref{2-2-rev}) and (\ref{1-infty-rev}) for $j=(n-1)/2$ prove Proposition \ref{prop-main} at $p= 2(n+1)/(n+3)$.  Indeed, $\theta=j/(j+1)$ solves the equations
\bes
-1= \theta \cdot 0 +(1-\theta) (-j-1) \:\: \textit{ and } \:\: \frac{n+3}{2(n+1)} = \frac{\theta}{2}+ \frac{1-\theta}{1},                                                                                                                                                                                                                                                                                                                                                                                                                                                                                                                                                                                                                                                                                                                                                                                                                                                                                                                                                                                                                                                                                                                                                                                                                                                                                                                                                                                                                                                                                                                                                                                        
\ees
and hence by complex interpolation we will get 
\be \label{est-pc}
\|H_{-1, z}\|_{L^{2(n+1)/(n+3)}(S) \ra L^{2(n+1)/(n-3)}(S)} \leq C_{j, t} \left(|z|^{-1/2}\right)^{(1-\theta)}= C_{j, t} \: |z|^{-1/(n+1)}.
\ee 
We now use Riesz-Thorin interpolation to complete the proof of the proposition in the range $2n/(n+2)\leq p\leq 2(n+1)/(n+3)$. To prove the resolvent estimates for $p\in [2(n+1)/(n+3), 2)$, by complex interpolation, it suffices to use the $L^2-L^2$ estimate (\ref{2-2-rev}) and the $L^1-L^\infty$ estimate (\ref{1-infty-rev}) for any integer $j > (n-1)/2$. Indeed, $\theta=j/(j+1)$ solves the equations
\bes
-1= \theta \cdot 0 +(1-\theta) (-j-1) \:\: \textit{ and } \:\: \frac{j+2}{2(j+1)}= \frac{\theta}{2}+ \frac{1-\theta}{1},
\ees
and hence we get $L^{p} \ra L^{p^\prime}$ boundedness of $H_{-1, z}$ for $p=2(j+1)/(j+2)$. Therefore, for $p$ close to $2$ we choose $j$ sufficiently large.

For the $L^2-L^2$ boundedness (\ref{2-2-rev}) we have 
\beas
\left\|H_{it, \alpha} \left(\sqrt{\frac{L}{|z|}}\right)\right\|_{L^2(S) \ra L^2(S)} \leq \sup_{\lambda>0} \left|e^{-t^2}~\psi\left(\frac{\lambda}{|z|}\right)~(1-\lambda \pm 0)^{it} \right|\leq e^{-t^2}.
\eeas
To prove the $L^1-L^\infty$ estimate (\ref{1-infty-rev-0}) we write the expression of the Schwarz kernel $k_{n, t}^z$ of the operator $H_{-n/2+it, z}(\sqrt{P/|z|})$  as follows
\beas
&& k_{n, t}^z(r)\\
&=& e^{(-n/2+it)^2}|z|^{-n/2+it} \int_{0}^\infty \psi\left(\frac{\lambda^2}{|z|}\right) \left(1-\frac{\lambda^2}{|z|} \pm i0\right)^{-j-1+it} ~\varphi_{\lambda}(r)~|{\bf c}(\lambda)|^{-2}~d\lambda\\
&=& e^{(-n/2+it)^2}|z|^{-n/2+1/2+it} \int_{0}^\infty \psi(\lambda) \left(1-\lambda \pm i0\right)^{-j-1+it} ~\varphi_{\sqrt{|z|\lambda}}(r)~\left|{\bf c} \left(\sqrt{|z|\lambda}\right)\right|^{-2}~\frac{d\lambda}{2 \sqrt \lambda}.
\eeas
We first assume that $n$ is odd. In this case it follows from the above that
\bea \label{est-kn }
&& e^{-(-n/2+it)^2}|z|^{n/2-1/2-it} k_{n, t}^z(r) \nonumber\\
&=&  \frac{1}{L(t)} \int_{0}^\infty  \psi(\lambda)~\frac{d^{(n-1)/2}}{d\lambda^{(n-1)/2}} \left(1-\lambda \pm i0\right)^{-1/2+it} ~ \varphi_{\sqrt{|z|\lambda}}(r)\:\left|{\bf c} \left(\sqrt{|z|\lambda}\right)\right|^{-2}~\frac{d\lambda}{2 \sqrt \lambda} \nonumber\\
&=&  \frac{1}{L(t)} \int_{0}^\infty  \left(1-\lambda \pm i0\right)^{-1/2+it} \frac{d^{(n-1)/2}}{d\lambda^{(n-1)/2}}~ F_z(\lambda)~d\lambda,
\eea
where $L(t)$ is a polynomial such that $|L(t)|>C>0$ for all $t\in \R$ and the function $F_z$ on $\R_+$ is defined by
\bes
F_z(\lambda) = \frac{\psi(\lambda)}{2\sqrt\lambda}~\varphi_{\sqrt{|z|\lambda}}(r)~\left|{\bf c} \left(\sqrt{|z|\lambda}\right)\right|^{-2}.
\ees
The following derivative estimates on the function $F_z$ follow from Corollary \ref{est-phi-c} by using chain rule.
\be\label{est-derivative-F}
\left|\frac{d^m}{d \lambda^{m}} F_z(\lambda)\right| \leq C \begin{cases} 
|z|^{(n-1)/4+m/2}, \hspace{2.5 cm} r \geq 1;\\
{\left(r\sqrt{|z|}\right)}^{-(n-1)/2+m}~{\sqrt{|z|}}^{n-1}, \:\:\:  r< 1, \: \sqrt{|z| \lambda} ~r\geq 1 ~;\\
{\sqrt{|z|}}^{n-1}, \:\:\:\:\:\:\:\:\:\: \hspace{3cm}  ~r <1, \:\: \sqrt{|z| \lambda} ~r< 1;
\end{cases}
\ee
Using the estimates (\ref{est-derivative-F}) we get from (\ref{est-kn }) that
\bea \label{k-expression}
|k_{n, t}^z(r)| \leq |z|^{-n/2+1/2} ~ \frac{|z|^{(n-1)/2}}{|L(t)|} \int_{0}^\infty |1-\lambda \pm i0|^{-1/2}~d\lambda \leq C_t.
\eea
This completes the proof of the estimate (\ref{1-infty-rev-0}) for the case $n$ is odd. We now assume $n$ is even. In this case the Schwarz kernel $k_{(n+1), t}^z$ of the operator $H_{-(n+1)/2+it, z}(\sqrt{P/|z|})$ is as follows
\beas
&& k_{(n+1), t}^z(r)\\
&=& e^{(-(n+1)/2+it)^2}|z|^{-(n+1)/2+it} \int_{0}^\infty \psi\left(\frac{\lambda^2}{|z|}\right) \left(1-\frac{\lambda^2}{|z|} \pm i0\right)^{-(n+1)/2+it} ~\varphi_{\lambda}(r)~|{\bf c}(\lambda)|^{-2}~d\lambda\\
&=&\frac{ e^{(-(n+1)/2+it)^2}|z|^{-(n+1)/2+1/2+it}}{L^\prime(t)} \int_{0}^\infty \frac{d^{n/2}}{d\lambda^{n/2}} \left(1-\lambda \pm i0\right)^{-1/2+it} ~F_z(\lambda)~ d\lambda.\\
&=&\frac{ e^{(-(n+1)/2+it)^2}|z|^{-(n+1)/2+1/2+it}}{L^\prime(t)}  \int_{0}^\infty \left(1-\lambda \pm i0\right)^{-1/2+it} ~\frac{d^{n/2}}{d\lambda^{n/2}} ~F_z(\lambda)~ d\lambda.
\eeas
Here also, $L^\prime(t)$ is a polynomial such that $|L^\prime (t)|>C>0$ for all $t\in \R$. Using the estimate (\ref{est-derivative-F}) it follows from above that
\be \label{est-k-n+1}
\left|k_{(n+1), t}^z(r)\right| \leq C_t \begin{cases}
|z|^{-1/4}, \:\: r\geq 1;\\
|z|^{-1/2}~ \left(r\sqrt{|z|}\right)^{1/2}, \:\:  r<1, ~r\sqrt{|z|} \geq 1;\\
|z|^{-1/2}, \:\: r<1, ~r\sqrt{|z|} < 1.
\end{cases} 
\ee
Similarly, the Schwarz kernel $k_{(n-1), t}^z$ of the operator $H_{-(n-1)/2+it, z}\left(\sqrt{P/|z|}\right)$  is given by
\beas
&& k_{(n-1), t}^z(r)\\
&=& e^{(-(n-1)/2+it)^2}|z|^{-(n-1)/2+it} \int_{0}^\infty \psi\left(\frac{\lambda^2}{|z|}\right) \left(1-\frac{\lambda^2}{|z|} \pm i0\right)^{-(n-1)/2+it} ~\varphi_{\lambda}(r)~|{\bf c}(\lambda)|^{-2}~d\lambda\\
&=&\frac{ e^{(-(n-1)/2+it)^2}|z|^{-(n-1)/2+1/2+it}}{L^{\prime \prime} (t)} \int_{0}^\infty \frac{d^{(n-2)/2}}{d\lambda^{(n-2)/2}} \left(1-\lambda \pm i0\right)^{-1/2+it} ~F_z(\lambda)~ d\lambda.\\
&=&\frac{ e^{(-(n-1)/2+it)^2}|z|^{-(n-1)/2+1/2+it}}{L^{\prime \prime}(t)} \int_{0}^\infty \left(1-\lambda \pm i0\right)^{-1/2+it} ~\frac{d^{(n-2)/2}}{d\lambda^{(n-2)/2}} ~F_z(\lambda)~ d\lambda.
\eeas
Using the estimate (\ref{est-derivative-F}) it follows from above that
\be \label{est-k-n-1}
|k_{(n-1), t}^z(r)| \leq C_t \begin{cases}
|z|^{1/4}, \:\: r\geq 1;\\
|z|^{1/2}~ \left(r\sqrt{|z|}\right)^{-1/2}, \:\:  r<1, ~r\sqrt{|z|} \geq 1;\\
|z|^{1/2}, \:\: r<1, ~r\sqrt{|z|} < 1.
\end{cases} 
\ee
Using Phragm\'en-Lindel\"of principle and (\ref{est-k-n+1})- (\ref{est-k-n-1}) we get the estimate (\ref{1-infty-rev-0}) for $n$ even.

For the $L^1 \ra L^\infty$ estimate (\ref{1-infty-rev}) we prove the upper bound of the kernel of $H_{-j-1+it, z}\left(\sqrt{L/|z|}\right)$. The kernel $k_{j,t}^z(r)$ of the operator $H_{-j-1+it, z}( L/|z|)$ is given by
\bea \label{est-kjtz}
&& k_{j, t}^z(r) \nonumber \\
&=& e^{(-j-1+it)^2}|z|^{-j-1+it} \int_{0}^\infty \psi\left(\frac{\lambda^2}{|z|}\right) \left(1-\frac{\lambda^2}{|z|} \pm i0\right)^{-j-1+it} ~\varphi_{\lambda}(r)~|{\bf c}(\lambda)|^{-2}~d\lambda\nonumber\\
&=& e^{(-j-1+it)^2}|z|^{-j-1/2+it} \int_{0}^\infty \left(1-\lambda \pm i0\right)^{-j-1+it} ~F_z(\lambda) ~ d\lambda \nonumber\\
&=&  \frac{e^{(-j-1+it)^2}|z|^{-j-1/2+it}}{L_0(t)} \int_{0}^\infty \frac{d^{j-1}}{d\lambda^{j-1}} \left(\left(1-\lambda \pm i0\right)^{-2+it}\right)~ F_z(\lambda)~d\lambda \nonumber\\
&=& (-1)^{j-1} \frac{e^{(-j-1+it)^2}|z|^{-j-1/2+it}}{L_0(t)} \int_{0}^\infty \left(1-\lambda \pm i0\right)^{-2+it} \frac{d^{j-1}}{d\lambda^{j-1}}F_z(\lambda)~d\lambda.
\eea
where $L_0(t)$ is a polynomial such that $|L_0(t)|>C>0$ for all $t\in \R$. We now use the following result \cite[eq. (26)] {GH}: if $a<b<c<0$  and $b=\theta a +(1-\theta) c$, then there exists $C>0$ such that for all $f\in C_0^\infty(\R)$ and $t\in \R$,
\be \label{est-lambda}
{\left\|(\lambda \pm 0)^{b+it} \ast f \right\|}_{L^\infty(\R)}\leq C(1+|t|) e^{\pi |t|/2}~{\left\|\chi_+^a \ast f\right\|}^\theta_{L^\infty(\R)} ~{\left\|\chi_+^c \ast f\right\|}^{1-\theta}_{L^\infty(\R)}.
\ee
Using (\ref{est-lambda}) it now follows from (\ref{est-kjtz}) that
\bea\label{est-k}
\left|k_{j, t}^z(r)\right| &\leq& C_j |z|^{-j-1/2} ~\left| \int_{0}^\infty \left(1-\lambda \pm i0\right)^{-2+it} \frac{d^{j-1}}{d\lambda^{j-1}}~ F_z(\lambda)~d\lambda\right| \nonumber\\
&\leq& C_j(1+|t|) e^{\pi t/2} |z|^{-j-1/2} \sup_{\sigma}\left| \left(\chi_+^{-1} \ast \frac{d^{j-1}}{d\lambda^{j-1}}~ F_z(\lambda)\right)(\sigma)\right|^{1/2} \nonumber\\
&& \hspace{3.5cm} \times \sup_{\sigma} \left| \left(\chi_+^{-3} \ast \frac{d^{j-1}}{d\lambda^{j-1}}~ F_z(\lambda)\right)(\sigma)\right|^{1/2} \nonumber\\
&\leq& C_j(1+|t|) e^{\pi t/2} |z|^{-j-1/2} \sup_{\sigma}\left| \int_{0}^\infty \delta(\sigma-\lambda) \frac{d^{j-1}}{d\lambda^{j-1}}~ F_z(\lambda)~d\lambda \right|^{1/2} \nonumber\\
&& \hspace{3.5cm} \times \sup_{\sigma} \left| \int_{0}^\infty \delta^{(2)}(\sigma-\lambda) \frac{d^{j-1}}{d\lambda^{j-1}}~ F_z(\lambda)~d\lambda \right|^{1/2} \nonumber\\
&\leq & C_j(1+|t|) e^{\pi t/2} |z|^{-j-1/2} \sup_{\sigma} \left| F_z^{(j-1)}(\sigma) \right|^{1/2}  \times \sup_{\sigma} \left| F_z^{(j+1)}(\sigma) \right|^{1/2} \:.
\eea

We use above derivative estimates to get the required estimates on $k_{j, t}^z$. To see this we first consider the case $r>1$. In this case, using (\ref{est-derivative-F}) it follows from (\ref{est-k}) that
\beas
|k_{j, t}^z(r)| &\leq& C_{j, t} |z|^{-j-1/2}~\left(|z|^{(n-1)/4+(j-1)/2}\right)^{1/2}~\left(|z|^{(n-1)/4+(j+1)/2}\right)^{1/2}\\
&=&  C_{j, t} |z|^{-j-1/2}~|z|^{(n-1)/4+j/2} \\
&\leq & C_{j, t}~|z|^{-1/2}, 
\eeas
since $|z|>1$ and $j\geq (n-1)/2$. Let us assume $r<1$ and $\sqrt{|z| \lambda} ~r< 1$. Using (\ref{est-derivative-F}) it follows from (\ref{est-k}) that 
\beas
|k_{j, t}^z(r)| &\leq& C_{j, t} |z|^{-j-1/2}~\left((r\sqrt{|z|})^{-(n-1)/2+(j-1)}~{\sqrt{|z|}}^{n-1}\right)^{1/2} \\
&& \times \left({\left(r\sqrt{|z|}\right)}^{-(n-1)/2+(j+1)}~{\sqrt{|z|}}^{n-1}\right)^{1/2}\\
&\leq& C_{j, t} |z|^{-j-1/2}~{|z|}^{(n-1)/4+j/2}~r^{-(n-1)/2+j}\\
&\leq& |z|^{-1/2},
\eeas
since $|z|>1, j\geq (n-1)/2$ and $r<1$. For the case $r<1$ and $\sqrt{|z| \lambda} ~r< 1$, the same estimate also follows easily from (\ref{est-k}).
This completes the proof of the resolvent estimates for the case $2(n+1)/(n+3)\leq p<2$. 

\end{proof}

\section{Eigenvalue bounds for Schr\"odinger operators with complex potentials}

In this section, we will estimate the absolute value of an $L^2$-eigenvalue $\lambda$ of a Schr\"odinger operator $-\Delta + V$ on $S$ with a complex potential $V$, in terms of the $L^p$ norm of $V$. In the arguments we will use the notion and properties of an $m$-sectorial operator (see \cite[pp. 1875-1877]{C2022}). We state the following lemma which can be proved verbatim following the functional analytic arguments presented in \cite[Proposition 13]{C2022}, and hence we omit the proof.
\begin{lem} \label{m-sectorial}
	Given a complex potential $V \in L^p(S)$ with $n/2 \le p <\infty$, the Schr\"odinger operator $-\Delta + V$ is an $m$-sectorial operator with a domain contained in $H^1(S)$. The spectrum of $-\Delta + V$ consists of the essential spectrum of $-\Delta$ and isolated eigenvalues of finite algebraic multiplicity.
\end{lem}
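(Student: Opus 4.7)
The plan is to pass through the theory of sesquilinear forms on $H^1(S)$ and invoke an $m$-sectorial representation theorem, rather than trying to identify the domain of $-\Delta+V$ by hand. Concretely, I would define the form
\[
\mathfrak{a}[u,v] := \int_S \langle \nabla u, \nabla v\rangle \, dx + \int_S V u \,\overline{v}\, dx, \qquad \mathrm{Dom}(\mathfrak{a}) = H^1(S),
\]
and show that the potential term $\mathfrak{v}[u,v] := \int_S V u \overline v \, dx$ is form-bounded by $\mathfrak{a}_0[u,v]:=\int_S \langle\nabla u,\nabla v\rangle dx$ with relative bound $0$ (or at least $<1$). The bound is produced by the Sobolev embedding $H^1(S)\hookrightarrow L^{2n/(n-2)}(S)$ (which holds on Damek--Ricci spaces, as the spaces have bounded geometry and $n\ge 3$): for $p\ge n/2$ and $V\in L^p(S)$, Hölder then gives
\[
|\mathfrak{v}[u,u]| \le \|V\|_{L^p} \|u\|_{L^{2p'}}^2 \le C\|V\|_{L^p}\,\|u\|_{H^1}^{2\theta}\|u\|_{L^2}^{2(1-\theta)},
\]
for the appropriate $\theta\in[0,1)$ given by interpolation, and a standard $\varepsilon$--splitting turns this into the required small relative form bound.

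With this estimate in hand, $\mathfrak{a}$ is a densely defined, closed, sectorial sesquilinear form (the real part controls $\|\nabla u\|_2^2 - C_\varepsilon\|u\|_2^2$, and the imaginary part is controlled by the real part). The first representation theorem of Kato then produces a unique $m$-sectorial operator $H$ associated to $\mathfrak{a}$, whose domain is automatically a subset of $\mathrm{Dom}(\mathfrak{a})=H^1(S)$, and on smooth compactly supported functions $H$ acts as $-\Delta+V$; this identifies $H$ with the Schrödinger operator $-\Delta+V$ in the form sense. This takes care of the $m$-sectoriality claim.

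For the spectral statement, I would use Weyl's theorem on the invariance of the essential spectrum under relatively compact perturbations, in the form-sense version. The resolvent $(-\Delta+a)^{-1}$ for $a$ large enough maps $L^2(S)$ into $H^1(S)$; composing with multiplication by $V\in L^p(S)$, together with the Sobolev embedding and the fact that Hölder + cutoff arguments show that multiplication by an $L^p$ function is a compact operator from $H^1(S)$ to $H^{-1}(S)$ on spaces with bounded geometry, gives that $V(-\Delta+a)^{-1}$ is compact on $L^2(S)$. Consequently $(-\Delta+V-z)^{-1}-(-\Delta-z)^{-1}$ is compact wherever both resolvents exist, and Weyl's theorem yields $\sigma_{\mathrm{ess}}(-\Delta+V)=\sigma_{\mathrm{ess}}(-\Delta)=[-Q^2/4,\infty)$. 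Every point of $\sigma(-\Delta+V)\setminus\sigma_{\mathrm{ess}}(-\Delta)$ is then an isolated eigenvalue of finite algebraic multiplicity by the analytic Fredholm theorem applied to the meromorphic family $z\mapsto (-\Delta+V-z)^{-1}$.

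The main obstacle I anticipate is verifying the compactness of $V(-\Delta+a)^{-1}$ as an operator on $L^2(S)$ when $p=n/2$ is at the endpoint, since then Sobolev embedding is borderline and the Rellich-type compactness usually needs a truncation argument: approximate $V$ by bounded, compactly supported $V_n$ (for which compactness of $V_n(-\Delta+a)^{-1}$ follows from Rellich on relatively compact sets together with the decay of the resolvent kernel on $S$), and pass to the limit using the uniform form-boundedness established above. The other ingredients (sectoriality, KLMN, Kato's representation theorem, Weyl's theorem, analytic Fredholm theory) are then standard once these geometric/analytic facts on $S$ are in place.
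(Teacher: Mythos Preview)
Your proposal is correct and is precisely the standard route the paper has in mind: the paper omits the proof entirely and simply refers to \cite[Proposition 13]{C2022}, whose argument is exactly the form-theoretic one you sketch (Sobolev embedding plus H\"older to get relative form-boundedness, Kato/KLMN to produce the $m$-sectorial realisation with form domain $H^1$, and Weyl/analytic Fredholm via compactness of $V(-\Delta+a)^{-1}$ obtained by approximating $V$ by bounded compactly supported potentials). Your identification of the endpoint $p=n/2$ as the only delicate spot, handled by the truncation/approximation argument, is also the right caveat.
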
 

\begin{proof}[Proof of Theorem \ref{thm-bound-1}]
	
	Let $\lambda\in \C$ with $|\lambda| >1$ be an eigenvalue and $\psi \in H^1(S)$ be the corresponding eigenfunction of $P + V$, that is
	\bes
	(P+V)\psi=\lambda \psi.
	\ees
	Assume first that $\lambda\in \C \backslash [0, \infty)$. We write 
	\begin{equation} \label{choice_gamma+n/2}
	\gamma+\frac{n}{2}=\frac{p}{2-p} \:.
	\end{equation}
	This implies that
	\begin{equation*}
	\frac{2n}{n+2}< p\leq \frac{2(n+1)}{(n+3)} \:.
	\end{equation*}
	Lemma \ref{m-sectorial} shows that $-\Delta-\rho^2+V$ is an $m$-sectorial operator with domain in $H^1(S)$. By Sobolev's embedding, it also follows that $\psi\in L^{2n/(n-2)}(S)$. Hence $\psi \in L^r(S)$ for $2 \leq r \leq 2n/(n-2)$. Combining uniform Sobolev inequalities of Theorem \ref{thm-resolvent} and H\"older's inequality, we obtain 
	\beas
	\|\psi\|_{L^{p^\prime}(S)} &\leq & {\left\|(P-\lambda)^{-1}\right\|}_{L^p(S)\ra L^{p^\prime}(S)}\|V\psi\|_{L^p(S)}\\
	&\leq& C|\lambda|^{n(1/p-1/2)-1} \|V\|_{L^{\gamma+n/2}(S)} \|\psi\|_{L^{p^\prime}(S)}.
	\eeas
	Hence,
	\begin{equation*}
	|\lambda|^{-[n(1/p-1/2)-1](\gamma +n/2)} \le C \|V\|^{\gamma+n/2}_{L^{\gamma+n/2}(S)} \:.
	\end{equation*}
	Next an elementary computation yields from (\ref{choice_gamma+n/2}) that
	\begin{equation*}
	n\left(\frac{1}{p}-\frac{1}{2}\right)-1 = - \frac{2\gamma}{2\gamma +n} \:,
	\end{equation*}
	and hence 
	\begin{equation*}
	-\left[n\left(\frac{1}{p}-\frac{1}{2}\right)-1\right]\left(\gamma +\frac{n}{2}\right) = \gamma \:,
	\end{equation*}
	which gives the result in this case.
	
	Now if $\lambda \in (0, \infty)$, we instead consider
	\bes
	\psi_\epsilon=(P-\lambda-i\epsilon)^{-1} (P-\lambda)\psi= f_\epsilon(P) \psi, \textit{ with } f_\epsilon(t)=(t-\lambda)/(t-\lambda-i\epsilon), \textit{ for } t>0.
	\ees
	Then the spectral theorem yields that
	\bes
	\|\psi_\epsilon-\psi\|_{L^2(S)}\ra 0  \textit{ as } \epsilon\ra 0.
	\ees
	For $\psi_\epsilon$ , we similarly obtain that
	\bes
	\|\psi_\epsilon\|_{L^{p^\prime}(S)} \leq C|\lambda|^{n(1/p-1/2)-1} \|V\|_{L^{\gamma+n/2}(S)} \|\psi_\epsilon\|_{L^{p^\prime}(S)}.
	\ees
	It follows that there exists $\widetilde \psi \in L^{p^\prime}(S)$ such that $\psi_\epsilon\ra \widetilde \psi$ in the weak $\ast$ topology of $L^{p^\prime}(S)$, whence $\psi = \widetilde{\psi}\in L^{p^\prime}(S)$. Consequently, we have
	\bes
	\|\psi\|_{L^{p^\prime}(S)}\leq \liminf_{\epsilon \ra 0} \|\psi_{\epsilon}\|_{L^{p^\prime}(S)} \leq C |\lambda|^{n(1/p-1/2)-1} \|V\|_{L^{\gamma+n/2}(S)} \|\psi\|_{L^{p^\prime}(S)},
	\ees
	which completes the proof for the short range case.
\end{proof}

\begin{proof}[Proof of Theorem \ref{thm-bound-2}]
	We still use the proof for the short range case but use the uniform Sobolev inequalities of Theorem \ref{thm-resolvent} for the range 
	\begin{equation*}
	\frac{2(n+1)}{n+3} \le p <2 
	\end{equation*}
	instead. This leads to the following,
	\bes
	\|\psi\|_{L^{p^\prime}(S)} \leq C|\lambda|^{(1/2-1/p)} \|V\|_{L^{\gamma+n/2}(S)} \|\psi_\epsilon\|_{L^{p^\prime}(S)}.
	\ees
	From (\ref{choice_gamma+n/2}), it follows that
	\begin{equation*}
	\frac{1}{p}=\frac{1+\gamma+\frac{n}{2}}{2\left(\gamma+\frac{n}{2}\right)} \:.
	\end{equation*}
	Then combining this together with the preceding inequality, we conclude that
	\bes
	|\lambda|^{1/2}\leq C\|V\|_{L^{\gamma+n/2}(S)}^{\gamma+n/2}.
	\ees
\end{proof}

\noindent{\bf Acknowledgement:} 
The first author is supported by the Department of Science and Technology, India (INSPIRE Faculty Award, IFA19-MA136). Second author is supported by a research fellowship from Indian Statistical Institute, Kolkata, India. The authors are thankful to Swagato K Ray for numerous useful discussions and detailed comments.


\begin{thebibliography}{99}

\bibitem{An} Anker, J.-Ph.; \textit{Sharp estimates for some functions of the Laplacian on noncompact symmetric spaces}, Duke Math. J. 65 (1992), no. 2, 257-297.

\bibitem{ADY}  Anker, J.-Ph.; Damek, E.; Yacoub, C.; \textit{Spherical analysis on harmonic AN groups}, Ann. Scuola Norm. Sup. Pisa Cl. Sci. (4) 23 (1996), no. 4, 643-679 (1997). 

\bibitem{APV} Anker, J.-Ph.; Pierfelice, V.;
Vallarino, M.; \textit{The wave equation on Damek-Ricci spaces}, Ann. Mat. Pura Appl. (4) 194 (2015), no. 3, 731-758.

\bibitem{A} Astengo, F.; \textit{A class of $L^p$ convolutors on harmonic extensions of H-type groups}, J. Lie Theory 5 (1995), no. 2, 147-164.

\bibitem{BSSY} Bourgain, J.; Shao, P.; Sogge, C. D.; Yao, X.; \textit{On $L^p$-resolvent estimates and the density of
eigenvalues for compact Riemannian manifolds}, Comm. Math. Phys. 333 (2015), no. 3, 1483-1527.

\bibitem{CC} Casarino, V.; Ciatti, P.; \textit{A restriction theorem for M\'etivier groups}, Adv. Math. 245 (2013), 52-77.

\bibitem{COSY} Chen, P.; Ouhabaz, E. M.; Sikora, A.; Yan, L.; \textit{Restriction estimates, sharp spectral multipliers and endpoint estimates for Bochner-Riesz means}, J. Anal. Math. 129 (2016), 219-283.

\bibitem{C-3} Chen, X.; \textit{Resolvent and spectral measure on non-trapping asymptotically hyperbolic manifolds III: Global-in-time Strichartz estimates without loss}, Ann. Inst. H. Poincar\'e C Anal. Non Lin\'eaire 35 (2018), no. 3, 803-829.


\bibitem{C2022} Chen, X.; \textit{The Sobolev inequalities on real hyperbolic spaces and eigenvalue bounds for Schr\"odinger operators with complex potentials}, Anal. PDE 15 (2022), no. 8, 1861-1878.


\bibitem{CH} Chen, X.; Hassell, A.; \textit{Resolvent and spectral measure on non-trapping asymptotically hyperbolic
manifolds II: Spectral measure, restriction theorem, spectral multiplier}, Ann. Inst. Fourier (Grenoble) 68 (2018), no. 3, 1011-1075.

\bibitem{Co} Cowling, M. G.; \textit{Harmonic analysis on semigroups}, Ann. of Math. (2) 117 (1983), no 2, 267-283.

\bibitem{CDKR} Cowling, M. G.; Dooley, A. H.; Kor\'anyi, A.; Ricci, F.; \textit{An approach to symmetric spaces of rank one via
groups of Heisenberg type}, J. Geom. Anal. 8 (1998), no 2, 199-237.

\bibitem{D-Book} Demeter, C.; \textit{Fourier restriction, decoupling, and applications}, Cambridge Studies in Advanced Mathematics, 184, Cambridge University Press, Cambridge, (2020).

\bibitem{DKS} Dos Santos Ferreira, D.; Kenig, C. E.; Salo, M.; \textit{On $L^p$ resolvent estimates for Laplace-Beltrami
operators on compact manifolds}, Forum Math. 26 (2014), no. 3, 815-849.

\bibitem{F} Frank, R. L.; \textit{Eigenvalue bounds for Schr\"odinger operators with complex potentials}, Bull. Lond. Math. Soc. 43 (2011), no. 4, 745-750.

\bibitem{FS} Frank, R. L.; Simon, B.; \textit{Eigenvalue bounds for Schr\"odinger operators with complex potentials. II}, J.
Spectr. Theory 7 (2017), no 3, 633-658.

\bibitem{F-III} Frank, R. L.; \textit{Eigenvalue bounds for Schr\"odinger operators with complex potentials. III}, Trans. Amer. Math. Soc. 370 (2018), no. 1, 219-240.

\bibitem{GL} Germain, P.; L\'eger, T.; \textit{Spectral projectors, resolvent, and Fourier restriction on the hyperbolic space}, J. Funct. Anal. 285 (2023), no. 2, Paper No. 109918, 37 pp.

\bibitem{GS} Goldberg, M.; Schlag, W.; \textit{A limiting absorption principle for the three-dimensional Schr\"odinger equation with $L^p$ potentials}, Int. Math. Res. Not. IMRN 2004, no. 75, 4049-4071.

\bibitem{GH} Guillarmou, C.; Hassell, A.; \textit{Uniform Sobolev estimates for non-trapping metrics},  J. Inst. Math. Jussieu 13 (2014), no. 3, 599-632.

\bibitem{GHK} Guillarmou, C.; Hassell, A.; Krupchyk, K.; \textit{Eigenvalue bounds for non-self-adjoint Schr\"odinger operators with nontrapping metrics}, Anal. PDE, 13 (2020), no 6,  1633-1670.

\bibitem{GHS} Guillarmou, C.; Hassell, A.; Sikora, A.; \textit{Restriction and spectral multiplier theorems on
asymptotically conic manifolds}, Anal. PDE 6, no. 4 (2013), 893-950.



\bibitem{HZ} Hassell, A.; Zhang, J.;  \textit{Global-in-time Strichartz estimates on nontrapping, asymptotically conic manifolds}, Anal. PDE 9 (2016), no 1, 151-192.




\bibitem{Ho} H\"ormander, L.; \textit{The analysis of linear partial differential operators. I. Distribution Theory and Fourier Analysis}, 2nd ed., Grundlehren der Mathematischen Wissenschaften, vol. 256, Springer, 1990.


\bibitem{HS} Huang, S.; Sogge, C. D.; \textit{Concerning $L^p$ resolvent estimates for simply connected manifolds
of constant curvature}, J. Funct. Anal. 267 (2014), no. 12, 4635-4666.

\bibitem{I} Ionescu, A. D.; \textit{Fourier integral operators on noncompact symmetric spaces of real rank one}, J. Funct. Anal. 174 (2000), no 2, 274-300.

\bibitem{IS} Ionescu, A. D.; Schlag, W.; \textit{Agmon-Kato-Kuroda theorems for a large class of perturbations}, Duke Math. J. 131 (2006), no. 3, 397-440.


\bibitem{KRS} Kenig, C.E.; Ruiz, A.; Sogge, C. D.; \textit{Uniform Sobolev inequalities and unique continuation for
second order constant coefficient differential operators}, Duke Math. J. 55(1987), no. 2, 329-347.

\bibitem{KT} Koch, H.; Tataru, D.; \textit{Carleman estimates and absence of embedded eigenvalues}, Comm. Math. Phys. 267
(2006), no. 2, 419-449.

\bibitem{Kumar} Kumar, P.; \textit{Fourier restriction theorem and characterization of weak $L^2$ eigenfunctions of the Laplace-Beltrami operator}, J. Funct. Anal. 266 (2014), no. 9, 5584-5597.

\bibitem{KR} Kumar, P.; Rano, S. K.; \textit{A characterization of weak $L^p$-eigenfunctions of the Laplacian on homogeneous
trees}, Ann. Mat. Pura Appl. (4) 200 (2021), no. 2, 721-736.

\bibitem{KumarRS} Kumar, P.; Ray, S. K.;
Sarkar, R. P.; \textit{The role of restriction theorems in harmonic analysis on harmonic $NA$ groups},
J. Funct. Anal. 258 (2010), no. 7, 2453-2482.

\bibitem{Lo} Lohou\'e, N.; \textit{Puissances complexes de l'op\'erateur de Laplace-Beltrami}, C.R. Acad. Sci. Paris S\'er. A-B 290 (1980), no 13, A605-A608.

\bibitem{LR} Lohou\'e, N.; Rychener, T.; \textit{Die Resolvente von $\Delta$ auf symmetrischen R\"aumen vom nichtkompakten Typ},
Comment. Math. Helv. 57 (1982), no. 3, 445-468.

\bibitem{MW} Miatello, R. J.; Will, C. E.; \textit{The residues of the resolvent on Damek-Ricci Spaces}, Proc. Amer. Math. Soc. 128 (2000), no. 4, 1221-1229.

\bibitem{Mu} M\"uller, D.; \textit{A restriction theorem for the Heisenberg group}, Ann. of Math. (2) 131 (1990), no 3, 567-587.

\bibitem{RS} Ray, S. K.; Sarkar, R. P.;  \textit{Fourier and Radon transform on harmonic $NA$ groups}, Trans. Amer. Math. Soc. 361 (2009), no 8, 4269-4297. 

\bibitem{Ri} Ricci, F.; \textit{The spherical transform on harmonic extensions of $H$-type groups}, Rend. Sem. Mat. Univ. Pol. Torino 50 (1992), no 4, 381-392.

\bibitem{SY} Shao, P.; Yao, X.; \textit{Uniform Sobolev resolvent estimates for the Laplace-Beltrami operator on compact manifolds}, Int. Math. Res. Not. IMRN (2014), no. 12, 3439-3463.
 
\bibitem{Simon} Simon, M.; \textit{$L^p$ norms of higher rank eigenfunctions and bounds for spherical functions}, J. Eur. Math. Soc. (JEMS) 18 (2016), no. 7, 1437-1493.

\bibitem{Sogge-Book} Sogge, C. D.; \textit{Fourier integrals in classical analysis}, second edition, Cambridge Tracts in Mathematics, 210, Cambridge University Press, Cambridge, (2017).

\bibitem{ST} Stanton, R. J.; Tomas, P. A.; \textit{Expansions for spherical functions on noncompact symmetric spaces}, Acta Math. 140 (1978), no. 3-4, 251-276.

\bibitem{S} Stein, E. M.; \textit{Oscillatory integrals in Fourier analysis}, Beijing Lectures in Harmonic Analysis, Ann. of
Math. Stud. 112, 307-355, Princeton Univ. Press, Princeton, NJ, 1986.


\bibitem{Str} Strichartz, R. S.; \textit{Analysis of the Laplacian on the complete Riemanian manifold}, J. Funct. Anal. 52 (1983), no 1, 48-79.

\bibitem{Taylor} Taylor, M. E.; \textit{$L^p$-estimates on functions of the Laplace operator}, Duke Math. J. 58 (1989), no 3, 773-793.

\bibitem{T} Tomas, P. A.; \textit{A restriction theorem for the Fourier transform}, Bull. Amer. Math. Soc. 81 (1975), 477-478. 


\end{thebibliography}
\end{document}